\newcommand{\R}{\mathbb{R}}
\newcommand{\Exp}{\mathbb{E}}
\renewcommand{\Pr}{\mathbb{P}}
\newcommand{\dd}{\mathrm{d}}
\newcommand{\ind}[1]{\mathds{1}_{\{#1\}}}
\newcommand{\dto}{\downarrow}
\newcommand{\sgn}{\mathrm{sgn}}
\newcommand{\loc}{\mathrm{loc}}
\newcommand{\sk}{\vskip 3mm}
\newcommand{\ie}{{\em i.e.} }
\newtheorem{defi}{Definition}[section]
\newtheorem{lem}[defi]{Lemma}
\newtheorem{prop}[defi]{Proposition}
\newtheorem{cor}[defi]{Corollary}
\newtheoremstyle{myremark}{}{}{}{0pt}{\bfseries}{.}{ }{}
\theoremstyle{myremark}
\newtheorem{rk}[defi]{Remark}
\title{The small noise limit of order-based diffusion processes}
\author{Benjamin Jourdain}
\address{{\bf Jourdain, Benjamin}\newline
{\rm Université Paris-Est, Cermics (ENPC), F-77455 Marne-la-Vallée}}
\email{\href{mailto:jourdain@cermics.enpc.fr}{jourdain@cermics.enpc.fr}}
\author{Julien Reygner}
\address{{\bf Reygner, Julien}\newline
{\rm Sorbonne Universités, UPMC Univ Paris 06, UMR 7599, LPMA, F-75005 Paris\newline
Université Paris-Est, Cermics (ENPC), F-77455 Marne-la-Vallée}}
\email{\href{mailto:julien.reygner@upmc.fr}{julien.reygner@upmc.fr}}
\subjclass[2000]{60H10, 60H30}
\keywords{Order-based diffusion process, small noise, Peano phenomenon, sticky particle dynamics, Lyapunov functional}
\begin{document}

\begin{abstract}
  In this article, we introduce and study order-based diffusion processes. They are the solutions to multidimensional stochastic differential equations with constant diffusion matrix, proportional to the identity, and drift coefficient depending only on the ordering of the coordinates of the process. These processes describe the evolution of a system of Brownian particles moving on the real line with piecewise constant drifts, and are the natural generalization of the rank-based diffusion processes introduced in stochastic portfolio theory or in the probabilistic interpretation of nonlinear evolution equations. Owing to the discontinuity of the drift coefficient, the corresponding ordinary differential equations are ill-posed. Therefore, the small noise limit of order-based diffusion processes is not covered by the classical Freidlin-Wentzell theory. The description of this limit is the purpose of this article.
  
  We first give a complete analysis of the two-particle case. Despite its apparent simplicity, the small noise limit of such a system already exhibits various behaviours. In particular, depending on the drift coefficient, the particles can either stick into a cluster, the velocity of which is determined by elementary computations, or drift away from each other at constant velocity, in a random ordering. The persistence of randomness in the small noise limit is of the very same nature as in the pioneering works by Veretennikov (Mat. Zametki, 1983) and Bafico and Baldi (Stochastics, 1981) concerning the so-called Peano phenomenon.
  
  In the case of rank-based processes, we use a simple convexity argument to prove that the small noise limit is described by the sticky particle dynamics introduced by Brenier and Grenier (SIAM J. Numer. Anal., 1998), where particles travel at constant velocity between collisions, at which they stick together. In the general case of order-based processes, we give a sufficient condition on the drift for all the particles to aggregate into a single cluster, and compute the velocity of this cluster. Our argument consists in turning the study of the small noise limit into the study of the long time behaviour of a suitably rescaled process, and then exhibiting a Lyapunov functional for this rescaled process.
\end{abstract}

\maketitle


\section{Introduction}

\subsection{Diffusions with small noise} The theory of ordinary differential equations (ODEs) with a regular drift coefficient and perturbed by a small stochastic noise was well developped by Freidlin and Wentzell~\cite{fw}. For a Lipschitz continuous function $b : \R^n \to \R^n$, they stated a large deviations principle for the laws of the solutions $X^{\epsilon}$ to the stochastic differential equations $\dd X^{\epsilon}(t) = b(X^{\epsilon}(t))\dd t + \sqrt{2\epsilon} \dd W(t)$, from which it can be easily deduced that $X^{\epsilon}$ converges to the unique solution to the ODE $\dot{x}=b(x)$ when $\epsilon$ vanishes. When the ODE $\dot{x}=b(x)$ is not well-posed, the behaviour of $X^{\epsilon}$ in the small noise limit is far less well understood. 

In one dimension of space, Veretennikov~\cite{ver83} and Bafico and Baldi~\cite{bafico} considered ODEs exhibiting a Peano phenomenon, \ie such that $b(0)=0$ and the ODE admits two continuous solutions $x^+$ and $x^-$ such that $x^+(0)=x^-(0)=0$, $x^+(t) > 0$ and $x^-(t) < 0$ for $t>0$. Other solutions are easily obtained for the ODE: as an example, for all $T>0$, the function $x^+_T$ defined by $x^+_T(t)=0$ if $t < T$ and $x^+_T(t) = x^+(t-T)$ if $t \geq T$ is also a continuous solution to the ODE. The solutions $x^+$ and $x^-$ are called {\em extremal} in the sense that they leave the origin instantaneously. For particular examples of such ODEs, it was proved in~\cite{ver83} and~\cite{bafico} that the small noise limit of the law of $X^{\epsilon}$ concentrates on the set of extremal solutions $\{x^+,x^-\}$ and the weights associated with each such solution was explicitely computed. In this case, large deviations principles were also proved by Herrmann~\cite{herrmann} and Gradinaru, Herrmann and Roynette~\cite{ghr}.

In higher dimensions of space, very few results are available. Buckdahn, Ouknine and Quincampoix~\cite{boq} proved that the limit points of the law of $X^{\epsilon}$ concentrate on the set of solutions to the ODE $\dot{x} = b(x)$ in the so-called Filippov generalized sense. However, an explicit description of this set is not easily provided in general. Let us also mention the work by Delarue, Flandoli and Vincenzi~\cite{dfv} in the specific setting of the Vlasov-Poisson equation on the real line for two electrostatic particles. For a particular choice of the electric field and of the initial conditions, they showed that the particles collapse in a finite time $T>0$, so that the ODE describing the Lagrangian dynamics of the two particles is singular at this time. After the singularity, the ODE exhibits a Peano-like phenomenon in the sense that it admits several extremal solutions, \ie leaving the singular point instantaneously. Similarly to the one-dimensional examples addressed in~\cite{ver83,bafico}, the trajectory obtained as the small noise limit of a stochastic perturbation is random among these extremal solutions.

\subsection{Order-based processes} In this article, we are interested in the small noise limit of the solution $X^{\epsilon}$ to the stochastic differential equation 
\begin{equation}\label{eq:sde}
  \forall t \geq 0, \qquad X^{\epsilon}(t) = x^0 + \int_{s=0}^t b(\Sigma X^{\epsilon}(s)) \dd s + \sqrt{2\epsilon} W(t),
\end{equation}
where $x^0 \in \R^n$, $b$ is a function from the symmetric group $S_n$ to $\R^n$, $W$ is a standard Brownian motion in $\R^n$ and, for $x=(x_1, \ldots, x_n) \in \R^n$, $\Sigma x$ is a permutation $\sigma \in S_n$ such that $x_{\sigma(1)} \leq \cdots \leq x_{\sigma(n)}$. A permutation $\sigma \in S_n$ shall sometimes be represented by the word $(\sigma(1) \cdots \sigma(n))$, especially for small values of $n$. As an example, the permutation $\sigma \in S_3$ defined by $\sigma(1)=2$, $\sigma(2)=1$ and $\sigma(3)=3$ is denoted by $(213)$.

On the set $O_n := \{x=(x_1, \ldots, x_n) \in \R^n : \exists i\not= j, x_i=x_j\}$ of vectors with non pairwise distinct coordinates, the permutation $\Sigma x$ is not uniquely defined. For the sake of precision, a convention to define $\Sigma x$ in this case is given below, although we prove in Proposition~\ref{prop:X} that the solution $X^{\epsilon}$ to~\eqref{eq:sde} does not depend on the definition of the quantity $b(\Sigma x)$ on $O_n$.

The solution $X^{\epsilon} = (X^{\epsilon}_1(t), \ldots, X^{\epsilon}_n(t))_{t \geq 0}$ to~\eqref{eq:sde} shall generically be called {\em order-based diffusion process}, as it describes the evolution of a system of $n$ particles moving on the real line with piecewise constant drift depending on their ordering. Note that, in such a system, the interactions can be nonlocal in the sense that a collision between two particles can modify the instantaneous drifts of all the particles in the system.

\sk
Section~\ref{s:n2} is dedicated to the complete description of the case $n=2$. Unsurprisingly, if the particles have distinct initial positions $x^0=(x^0_1,x^0_2)$, then in the small noise limit they first travel with constant velocity vector $b(\Sigma x^0)$. 

At a collision, or equivalently when the particles start from the same initial position, various behaviours are observed, depending on $b$. To describe these situations, a configuration $\sigma \in S_2$ is said to be {\em converging} if $b_{\sigma(1)}(\sigma) \geq b_{\sigma(2)}(\sigma)$, that is to say, the velocity of the leftmost particle is larger than the velocity of the rightmost particle, and {\em diverging} otherwise. If both configurations are converging, which writes 
\begin{equation*}
  b_1(12) \geq b_2(12), \qquad b_2(21) \geq b_1(21),
\end{equation*}
and shall be referred to as the {\em converging/converging case}, then, in the small noise limit, the particles stick together and form a cluster. The velocity of the cluster can be explicitely computed by elementary arguments. Except in some degenerate situations, it is deterministic and constant. If one of the configuration is converging while the other is diverging, which shall be referred to as the {\em converging/diverging case}, then, in the small noise limit, the particles drift away from each other with constant velocity vector $b(\sigma)$, where $\sigma$ is the diverging configuration. Finally, if both configurations are diverging, which writes
\begin{equation*}
  b_1(12) < b_2(12), \qquad b_2(21) < b_1(21),
\end{equation*}
and shall be referred to as the {\em diverging/diverging case}, then the particles drift away from each other with constant velocity vector $b(\sigma)$, where $\sigma$ is a random permutation in $S_2$ with an explicit distribution. 

The study of the two-particle case is made possible by the fact that most results actually stem from the study of the scalar process $Z^{\epsilon} := X^{\epsilon}_1 - X^{\epsilon}_2$. In particular, our result in the diverging/diverging case is similar to the situation of~\cite{ver83,bafico}, in the sense that the zero noise equation for $Z^{\epsilon}$ admits exactly two extremal solutions and exhibits a Peano phenomenon.

\sk
In higher dimensions, providing a general description of the small noise limit of $X^{\epsilon}$ seems to be a very challenging issue. As a first step, Sections~\ref{s:rank} and~\ref{s:clus} address two cases in which the function $b$ satisfies particular conditions. In Section~\ref{s:rank}, we assume that there exists a vector $b = (b_1, \ldots, b_n) \in \R^n$ such that, for all $\sigma \in S_n$, for all $i \in \{1, \ldots, n\}$, $b_{\sigma(i)}(\sigma)=b_i$. In other words, the instantaneous drift of the $i$-th particle does not depend on the whole ordering of $(X^{\epsilon}_1(t), \ldots, X^{\epsilon}_n(t))$, but only on the rank of $X^{\epsilon}_i(t)$ among $X^{\epsilon}_1(t), \ldots, X^{\epsilon}_n(t)$. In particular, the interactions are local in the sense that a collision between two particles does not affect the instantaneous drifts of the particles not involved in the collision. Such particle systems are generally called systems of {\em rank-based} interacting diffusions. They are of interest in the study of equity market models~\cite{fernholz, banner, pp, ferkar, ik, ipbkf, ips, fikp, fik, iks} or in the probabilistic interpretation of nonlinear evolution equations~\cite{bostal, jourdain:porous, jm, jourey, dsvz}.

A remarkable property of such systems is that the {\em reordered} particle system, defined as the process $Y^{\epsilon} = (Y^{\epsilon}_1(t), \ldots, Y^{\epsilon}_n(t))_{t \geq 0}$ such that, for all $t \geq 0$, $(Y^{\epsilon}_1(t), \ldots, Y^{\epsilon}_n(t))$ is the increasing reordering of $(X^{\epsilon}_1(t), \ldots, X^{\epsilon}_n(t))$, is a Brownian motion with constant drift vector $b$, normally reflected at the boundary of the polyhedron $D_n = \{(y_1, \ldots, y_n) \in \R^n : y_1 \leq \cdots \leq y_n\}$. By a simple convexity argument, we prove that the limit of $Y^{\epsilon}$ when $\epsilon$ vanishes is the deterministic process $\xi$ with the same drift $b$, normally reflected at the boundary of $D_n$.

The small noise limit $\xi$ turns out to coincide with the sticky particle dynamics introduced by Brenier and Grenier~\cite{bregre}, which describes the evolution of a system of particles with unit mass, travelling at constant velocity between collisions, and such that, at each collision, the colliding particles stick together and form a cluster, the velocity of which is determined by the global conservation of momentum. This provides an effective description of the small noise limit of $X^{\epsilon}$. 

\sk
An important fact in the rank-based case is that, whenever some particles form a cluster in the small noise limit, then for any partition of the cluster into a group of leftmost particles and a group of rightmost particles, the average velocity of the leftmost group is larger than the average velocity of the rightmost group. In Section~\ref{s:clus} we provide an extension of this stability condition to the general case of order-based diffusions. We prove that, when all the particles have the same initial position, this condition ensures that in the small noise limit, all the particles aggregate into a single cluster. However the condition is no longer necessary and we give a counterexample with $n=3$ particles.

To determine the motion of the cluster, we reinterpret the study of the small noise limit of $X^{\epsilon}$ as a problem of long time behaviour for the process $X^1$, thanks to an adequate change in the space and time scales. In the rank-based case, it is well known that $X^1$ does not have an equilibrium~\cite{pp,jm} as its projection along the direction $(1, \ldots, 1)$ is a Brownian motion with constant drift. However, under a stronger version of the stability condition, the orthogonal projection $Z^1$ of $X^1$ on the hyperplane $M_n = \{(z_1, \ldots, z_n) \in \R^n : z_1 + \cdots + z_n = 0\}$ admits a unique stationary distribution $\mu$. We extend both the strong stability condition and the existence and uniqueness result for $\mu$ to the order-based case, and thereby express the velocity of the cluster in terms of $\mu$.

\sk
In the conclusive Section~\ref{s:conclusion}, we state some conjectures as regards the general small noise limit of $X^{\epsilon}$, and we discuss the link between our results and the notion of {\em generalized flow} introduced by E and Vanden-Eijnden~\cite{eve}.

\subsection{Preliminary results and conventions} 

\subsubsection{Definition of $\Sigma$} For all $x \in \R^n$, we denote by $\bar{\Sigma}x$ the set of permutations $\sigma \in S_n$ such that $x_{\sigma(1)} \leq \cdots \leq x_{\sigma(n)}$. The set $\bar{\Sigma}x$ is nonempty, and it contains a unique element if and only if $x \not\in O_n$. The permutation $\Sigma x$ is defined as the lowest element of $\bar{\Sigma}x$ for the lexicographical order on the associated words. 

\subsubsection{Well-posedness of~\eqref{eq:sde}} Throughout this article, $x^0 \in \R^n$ refers to the initial positions of the particles, and a standard Brownian motion $W$ in $\R^n$ is defined on a given probability space $(\Omega, \mathcal{F}, \Pr_{x^0})$. The filtration generated by $W$ is denoted by $(\mathcal{F}_t)_{t \geq 0}$. The expectation under $\Pr_{x^0}$ is denoted by $\Exp_{x^0}$. 

\begin{prop}\label{prop:X}
  For all $\epsilon > 0$, for all $x^0 \in \R^n$, the stochastic differential equation~\eqref{eq:sde} admits a unique strong solution on the probability space $(\Omega, \mathcal{F}, \Pr_{x^0})$ provided with the filtration $(\mathcal{F}_t)_{t \geq 0}$. Besides, $\Pr_{x^0}$-almost surely,
  \begin{equation*}
    \forall t \geq 0, \qquad \int_{s=0}^t \ind{X^{\epsilon}(s) \in O_n} \dd s = 0.
  \end{equation*}
\end{prop}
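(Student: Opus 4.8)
The plan is to establish strong existence and uniqueness, and then the occupation-time property, by reducing the discontinuous-drift SDE~\eqref{eq:sde} to a setting where standard results apply. The key obstacle is that the drift $b(\Sigma x)$ is discontinuous precisely on $O_n$, so the classical Lipschitz theory of Freidlin and Wentzell does not apply directly. However, the drift is bounded (it takes only finitely many values, one per permutation in $S_n$), which is the feature I would exploit.

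First I would prove strong existence and uniqueness by a Girsanov argument. Since $b$ takes values in a finite set, the map $x \mapsto b(\Sigma x)$ is a bounded Borel measurable function on $\R^n$. The plan is therefore to let $\tilde{X}^{\epsilon}(t) := x^0 + \sqrt{2\epsilon}\,W(t)$ be the driftless process and to define the exponential martingale
\begin{equation*}
  \mathcal{E}(t) = \exp\left(\frac{1}{\sqrt{2\epsilon}}\int_{s=0}^t b(\Sigma \tilde{X}^{\epsilon}(s)) \cdot \dd W(s) - \frac{1}{4\epsilon}\int_{s=0}^t |b(\Sigma \tilde{X}^{\epsilon}(s))|^2 \dd s\right).
\end{equation*}
Because $b$ is bounded, Novikov's condition holds trivially, so $\mathcal{E}$ is a genuine martingale and Girsanov's theorem produces a weak solution under the tilted measure. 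For strong uniqueness I would invoke the one-dimensional Zvonkin-type / Veretennikov results on SDEs with bounded measurable drift and nondegenerate additive noise: since the diffusion coefficient $\sqrt{2\epsilon}$ is constant and nondegenerate and the drift is bounded and measurable, pathwise uniqueness holds, and by the Yamada--Watanabe theorem weak existence plus pathwise uniqueness upgrades to the existence of a unique strong solution adapted to $(\mathcal{F}_t)_{t\geq 0}$.

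Next I would establish the occupation-time identity $\int_0^t \ind{X^{\epsilon}(s)\in O_n}\dd s = 0$ almost surely. The set $O_n = \{x : \exists i \neq j,\ x_i = x_j\}$ is contained in the finite union $\bigcup_{i<j} H_{ij}$ of the hyperplanes $H_{ij} = \{x : x_i = x_j\}$, each of which has Lebesgue measure zero in $\R^n$. The natural route is to show that the law of $X^{\epsilon}(s)$ for each fixed $s>0$ is absolutely continuous with respect to Lebesgue measure; then $\Pr_{x^0}(X^{\epsilon}(s)\in O_n)=0$ for each $s$, and by Fubini together with the nonnegativity of the integrand we conclude $\Exp_{x^0}\int_0^t\ind{X^\epsilon(s)\in O_n}\dd s = \int_0^t \Pr_{x^0}(X^\epsilon(s)\in O_n)\dd s = 0$, which forces the integral to vanish almost surely. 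The absolute continuity of the marginal law follows from the Girsanov construction: under the reference measure $X^\epsilon(s) = x^0 + \sqrt{2\epsilon}W(s)$ is Gaussian hence has a density, and the Radon--Nikodym density $\mathcal{E}(t)$ is finite almost surely, so the tilted marginal law remains absolutely continuous.

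The main obstacle I anticipate is justifying pathwise uniqueness rigorously in the multidimensional, discontinuous-drift setting, since the cleanest Zvonkin-type uniqueness results are one-dimensional, whereas here $n$ may be arbitrary. One remedy is to project onto the pairwise differences: the discontinuities all lie on the hyperplanes $\{x_i = x_j\}$, and controlling the time spent on these hyperplanes (via the occupation-time bound just sketched, applied to each scalar difference $X^\epsilon_i - X^\epsilon_j$, which is itself a Brownian motion with bounded measurable drift) shows that the drift discontinuity is encountered on a Lebesgue-null set of times, so that any two solutions driven by the same $W$ agree. An alternative and perhaps cleaner route is to build the solution explicitly and inductively between the successive collision times, using the strong Markov property and the fact that away from $O_n$ the drift is locally constant so that the SDE is trivially well-posed; one then argues that collisions do not accumulate. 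Finally, the claimed independence of $X^\epsilon$ from the convention defining $\Sigma$ on $O_n$ is an immediate corollary of the occupation-time identity, since modifying $b(\Sigma x)$ on the null set $O_n$ changes the drift only on a set of times of zero Lebesgue measure and hence leaves the integral in~\eqref{eq:sde} unchanged.
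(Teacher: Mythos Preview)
Your approach is essentially correct, but you are making unnecessary work for yourself on the existence/uniqueness part and your fallback arguments have gaps.

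For strong existence and pathwise uniqueness, the paper simply invokes Veretennikov's 1980 result (the reference~\cite{veretennikov}), which directly covers stochastic differential equations in $\R^n$ with bounded measurable drift and nondegenerate diffusion (here the diffusion matrix is $\sqrt{2\epsilon}\,I_n$). Your concern that ``the cleanest Zvonkin-type uniqueness results are one-dimensional'' is misplaced: Veretennikov's theorem is genuinely multidimensional, and the paper applies it in one line. Your two proposed workarounds both fail. The first --- arguing that since each solution spends zero time on $O_n$ the drift discontinuity is irrelevant --- does not give pathwise uniqueness: two solutions could both avoid $O_n$ for Lebesgue-almost every time and still diverge, because uniqueness is not a question of where the drift is defined but of how solutions behave near the discontinuity. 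The second --- constructing the solution between successive collision times and arguing collisions do not accumulate --- is wrong because the set $\{t : X^{\epsilon}_i(t) = X^{\epsilon}_j(t)\}$ is, like the zero set of Brownian motion, a perfect set with no isolated points; collisions accumulate everywhere they occur.

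For the occupation-time identity, your Girsanov-plus-Fubini argument is correct and self-contained: the marginal law of $X^{\epsilon}(s)$ inherits absolute continuity from the Gaussian reference process, so $\Pr_{x^0}(X^{\epsilon}(s)\in O_n)=0$ for each $s>0$, and Fubini finishes. The paper takes a slightly different route: it applies the occupation time formula to each scalar semimartingale $X^{\epsilon}_i - X^{\epsilon}_j$, whose quadratic variation is $4\epsilon t$, to get $4\epsilon\int_0^t \ind{X^{\epsilon}_i(s)=X^{\epsilon}_j(s)}\,\dd s = \int_{\R}\ind{a=0}L^a_t\,\dd a = 0$, and then takes the union over pairs $i\neq j$. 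Both arguments are short; yours avoids local times, the paper's avoids the change of measure.
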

\begin{proof}
  The strong existence and pathwise uniqueness follow from Veretennikov~\cite{veretennikov}, as the drift function $x \mapsto b(\Sigma x)$ is measurable and bounded, while the diffusion matrix is diagonal. The second part of the proposition is a consequence of the occupation time formula~\cite[p.~224]{revuz} applied to the semimartingales $X^{\epsilon}_i - X^{\epsilon}_j$, $i \not= j$.
\end{proof}

\subsubsection{Convergence of processes} Let $d \geq 1$. For all $T>0$, the space of continuous functions $C([0,T],\R^d)$ is endowed with the sup norm in time associated with the $L^1$ norm on $\R^d$. Let $A^{\epsilon} = (A_1^{\epsilon}(t), \ldots, A_d^{\epsilon}(t))_{t \geq 0}$ be a continuous process in $\R^d$ defined on the probability space $(\Omega, \mathcal{F}, \Pr_{x^0})$.

\begin{itemize}
  \item If $a = (a_1(t), \ldots, a_d(t))_{t \geq 0}$ is a continuous process in $\R^d$ defined on the probability space $(\Omega, \mathcal{F}, \Pr_{x^0})$, then for all $p \in [1,+\infty)$, $A^{\epsilon}$ is said to converge to $a$ in $L^p_{\loc}(\Pr_{x^0})$ if
  \begin{equation*}
    \forall T > 0, \qquad \lim_{\epsilon \dto 0} \Exp_{x^0}\left(\sup_{t \in [0,T]} \sum_{i=1}^d |A_i^{\epsilon}(t)-a_i(t)|^p\right) = 0.
  \end{equation*}

  \item If $a = (a_1(t), \ldots, a_d(t))_{t \geq 0}$ is a continuous process in $\R^d$ defined on some probability space $(\Omega', \mathcal{F}', \Pr')$, the process $A^{\epsilon}$ is said to converge in distribution to $a$ if, for all $T>0$, for all bounded continuous function $F : C([0,T],\R^d) \to \R$,
  \begin{equation*}
    \lim_{\epsilon \dto 0} \Exp_{x^0}(F(A^{\epsilon})) = \Exp'(F(a)),
  \end{equation*}
  where $\Exp'$ denotes the expectation under $\Pr'$, and, for the sake of brevity, the respective restrictions of $A^{\epsilon}$ and $a$ to $[0,T]$ are simply denoted by $A^{\epsilon}$ and $a$. 
\end{itemize}

Finally, the deterministic process $(t)_{t \geq 0}$ shall simply be denoted by $t$.


\section{The two-particle case}\label{s:n2}

In this section, we assume that $n=2$. Then,~\eqref{eq:sde} rewrites
\begin{equation}\label{eq:sde:n2}
  X^{\epsilon}(t) = x^0 + b(12) \int_{s=0}^t \ind{X^{\epsilon}_1(s) \leq X^{\epsilon}_2(s)} \dd s + b(21) \int_{s=0}^t \ind{X^{\epsilon}_1(s) > X^{\epsilon}_2(s)} \dd s + \sqrt{2\epsilon} W(t).
\end{equation}

In the configuration $(12)$, that is to say whenever $X^{\epsilon}_1(t) \leq X^{\epsilon}_2(t)$, the instantaneous drift of the $i$-th particle is $b_i(12)$. Thus, in the small noise limit, the particles tend to get closer to each other if $b_1(12) \geq b_2(12)$, and to drift away from each other else. As a consequence, the configuration $(12)$ is said to be {\em converging} if $b^- := b_1(12)-b_2(12) \geq 0$ and {\em diverging} if $b^- < 0$. Similarly, the configuration $(21)$ is said to be {\em converging} if $b^+ := b_1(21)-b_2(21) \leq 0$ and {\em diverging} if $b^+ > 0$. The introduction of the quantities $b^-$ and $b^+$ is motivated by the fact that the reduced process $Z^{\epsilon} := X^{\epsilon}_1 - X^{\epsilon}_2$ satisfies the scalar stochastic differential equation
\begin{equation}\label{eq:Z}
  Z^{\epsilon}(t) = z^0 + \int_{s=0}^t \ell(Z^{\epsilon}(s)) \dd s + 2\sqrt{\epsilon} B(t),
\end{equation}
where $z^0:=x^0_1-x^0_2$, $\ell(z) := b^-\ind{z \leq 0} + b^+\ind{z > 0}$ and $B := (W_1-W_2)/\sqrt{2}$ is a standard Brownian motion in $\R$ defined on $(\Omega,\mathcal{F}, \Pr_x)$, adapted to the filtration $(\mathcal{F}_t)_{t \geq 0}$.

The description of the small noise limit of $X^{\epsilon}$ is exhaustively made in Subsection~\ref{ss:X}. Some proofs are postponed to Appendix~\ref{app:n2}. In Subsection~\ref{ss:Z}, the small noise limit of $Z^{\epsilon}$ is discussed. In the sequel, we use the terminology of~\cite{bafico} and call {\em extremal solution} to the zero noise version of~\eqref{eq:sde:n2} a continuous function $x = (x(t))_{t \geq 0}$ such that
\begin{equation*}
  \forall t \geq 0, \qquad x(t) = x^0 + \int_{s=0}^t b(\Sigma x(s)) \dd s,
\end{equation*}
and, for all $t > 0$, $x(t) \not\in O_2$.

\subsection{Small noise limit of the system of particles}\label{ss:X} To describe the small noise limit of $X^{\epsilon}$, we first address the case in which both particles have the same initial position, \ie $x^0 \in O_2$. The zero noise version of~\eqref{eq:sde:n2} rewrites
\begin{equation*}
  \forall t \geq 0, \qquad x(t) = x^0 + b(12) \int_{s=0}^t \ind{x_1(s) \leq x_2(s)} \dd s + b(21) \int_{s=0}^t \ind{x_1(s) > x_2(s)} \dd s.
\end{equation*}

In the diverging/diverging case $b^- < 0$, $b^+ > 0$, the equation above admits two extremal solutions $x^-$ and $x^+$ defined by $x^-(t) = x^0+b(12)t$ and $x^+ = x^0+b(21)t$. In the converging/diverging case $b^- \geq 0$, $b^+ > 0$, the only extremal solution is $x^+$, and symmetrically, in the case $b^- < 0$, $b^+ \leq 0$, the only extremal solution is $x^-$. In all these cases, the small noise limit of $X^{\epsilon}$ concentrates on the set of extremal solutions to the zero noise equation, similarly to the situations addressed in~\cite{ver83,bafico}.

\begin{prop}\label{prop:di}
  Assume that $x^0 \in O_2$, and recall that $x^-(t) = x^0+b(12)t$, $x^+(t) = x^0+b(21)t$.
  \begin{enumerate}[label=(\roman*), ref=\roman*]
    \item\label{case:didi} If $b^- < 0$, $b^+ > 0$, the process $X^{\epsilon}$ converges in distribution to $\rho x^+ + (1-\rho) x^-$ where $\rho$ is a Bernoulli variable with parameter $-b^-/(b^+-b^-)$.
    \item\label{case:condi} If $b^- \geq 0$, $b^+ > 0$, the process $X^{\epsilon}$ converges in $L^1_{\loc}(\Pr_{x^0})$ to $x^+$.
    \item\label{case:dicon} If $b^- < 0$, $b^+ \leq 0$, the process $X^{\epsilon}$ converges in $L^1_{\loc}(\Pr_{x^0})$ to $x^-$.
  \end{enumerate}
\end{prop}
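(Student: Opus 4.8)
The plan is to reduce the whole statement to the scalar process $Z^{\epsilon}=X^{\epsilon}_1-X^{\epsilon}_2$ of~\eqref{eq:Z}. Replacing $\ind{X^{\epsilon}_1\leq X^{\epsilon}_2}=1-\ind{Z^{\epsilon}>0}$ in~\eqref{eq:sde:n2} and introducing the occupation time $A^{\epsilon}(t):=\int_{s=0}^t\ind{Z^{\epsilon}(s)>0}\dd s$, I would first record the identity
\[
  X^{\epsilon}(t)=x^0+b(12)\,t+\bigl(b(21)-b(12)\bigr)A^{\epsilon}(t)+\sqrt{2\epsilon}\,W(t).
\]
Since $\sqrt{2\epsilon}\,W\to0$ in $L^p_{\loc}(\Pr_{x^0})$ for every $p$, the asymptotics of $X^{\epsilon}$ are entirely encoded in those of the scalar occupation time $A^{\epsilon}$, with $A^{\epsilon}(t)\to t$ corresponding to $x^+$ (configuration $(21)$, $Z^{\epsilon}>0$) and $A^{\epsilon}(t)\to0$ to $x^-$ (configuration $(12)$, $Z^{\epsilon}\leq0$). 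Because $x^0\in O_2$, one has $z^0=0$, so $Z^{\epsilon}$ starts from the discontinuity of $\ell$.

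For the diverging/diverging case~\eqref{case:didi}, $\ell(z)=b^-<0$ on $\{z\leq0\}$ and $\ell(z)=b^+>0$ on $\{z>0\}$, so $Z^{\epsilon}$ is a transient scalar diffusion pushed away from the origin. I would compute its scale function $s_{\epsilon}$, normalised by $s_{\epsilon}(0)=0$, through $s_{\epsilon}'(z)=\exp\bigl(-\frac{1}{2\epsilon}\int_{u=0}^z\ell(u)\,\dd u\bigr)$; it is bounded at both ends, with $s_{\epsilon}(+\infty)=2\epsilon/b^+$ and $s_{\epsilon}(-\infty)=2\epsilon/b^-$. Hence $Z^{\epsilon}$ converges almost surely to $+\infty$ or $-\infty$, and the standard scale-function formula gives, independently of $\epsilon$,
\[
  \Pr_{x^0}(Z^{\epsilon}\to+\infty)=\frac{b^+}{b^+-b^-},\qquad \Pr_{x^0}(Z^{\epsilon}\to-\infty)=\frac{-b^-}{b^+-b^-}.
\]
On the event $\{Z^{\epsilon}\to+\infty\}$ the process commits to the positive half-line after a time that vanishes with $\epsilon$, so $A^{\epsilon}(t)\to t$ for every $t>0$; symmetrically $A^{\epsilon}(t)\to0$ on the complementary event. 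Identifying these two branches with $x^+$ and $x^-$ then produces the convergence in distribution to $\rho x^+ + (1-\rho)x^-$ with the announced Bernoulli law for $\rho$.

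In the converging/diverging case~\eqref{case:condi}, the signs $b^-\geq0$ and $b^+>0$ give $\ell\geq0$, so the same scale-function analysis shows $Z^{\epsilon}\to+\infty$ almost surely. To upgrade this into $L^1_{\loc}$ convergence of $A^{\epsilon}$ to $t$, I would bound the expected time spent on the wrong side: a comparison of $Z^{\epsilon}$ with a constant-drift process $b^{+}t+2\sqrt{\epsilon}\,B$ from above, combined with an occupation-time estimate for the transient diffusion (via its explicit speed measure and Green's function), should yield $\Exp_{x^0}\int_{s=0}^T\ind{Z^{\epsilon}(s)\leq0}\,\dd s\to0$ for every $T$. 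Consequently $A^{\epsilon}\to t$ and $X^{\epsilon}\to x^+$ in $L^1_{\loc}(\Pr_{x^0})$. The case~\eqref{case:dicon} follows by the symmetry $z\mapsto-z$, which exchanges the roles of $b^-$ and $b^+$ and of $x^-$ and $x^+$.

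All of the difficulty is concentrated at the discontinuity $\{z=0\}$ of $\ell$, where no single comparison with a constant-drift diffusion is available. The clean part is the scale-function computation of the exit behaviour; the delicate part is turning the convergence $Z^{\epsilon}\to\pm\infty$ into convergence of the occupation time $A^{\epsilon}$ on a fixed interval $[0,T]$. In case~\eqref{case:didi} this requires showing that, conditionally on the selected branch, the time spent near the origin before commitment and on the opposite half-line afterwards is negligible; in cases~\eqref{case:condi}--\eqref{case:dicon} it requires the quantitative occupation-time bound for the wrong half-line. I expect these scalar estimates to be the main obstacle, and to be the computations deferred to Appendix~\ref{app:n2}; once they are in hand, the reduction of the first paragraph transfers them to $X^{\epsilon}$ at no extra cost.
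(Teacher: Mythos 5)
Your reduction is exactly the paper's: it writes $X^{\epsilon}(t)=x^0+b(12)\zeta^{\epsilon}(t)+b(21)(t-\zeta^{\epsilon}(t))+\sqrt{2\epsilon}W(t)$ with $\zeta^{\epsilon}(t)=\int_{s=0}^{t}\ind{Z^{\epsilon}(s)\leq 0}\dd s=t-A^{\epsilon}(t)$, and derives Proposition~\ref{prop:di} from Lemma~\ref{lem:zeta}, whose cases~\eqref{case:didi:zeta}--\eqref{case:dicon:zeta} are proved in Appendix~\ref{app:n2}. Your branch probabilities are correct and you locate the difficulty in the right place. The one genuine difference is in case~\eqref{case:didi}: you compute the scale function on all of $\R$ and work with the tail event $\{Z^{\epsilon}\to+\infty\}$, whereas the paper solves the elliptic problem $2\epsilon u''+\ell u'=0$ only on a mesoscopic window $[-\delta,\delta]$ (Lemma~\ref{lem:taudelta}) and conditions on the exit place $Z^{\epsilon}(\tau_{\delta})$. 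This is not cosmetic: for fixed $\epsilon$ the whole-line transience says nothing quantitative about $[0,T]$, and the ``commitment'' you defer is obtained precisely by tuning $\delta=\delta(\epsilon)$ so that three things happen at once --- $\tau_{\delta}$ vanishes in probability (requires $\delta/\sqrt{\epsilon}\to0$, via the comparison $|Z^{\epsilon}(t)|\geq 2\sqrt{\epsilon}\tilde{B}(t)$ from the It\^o--Tanaka formula), the exit probabilities converge to $b^+/(b^+-b^-)$ (requires $\epsilon/\delta\to0$, Corollary~\ref{cor:taudelta}), and the probability of ever returning to $0$ from level $\delta$, which is exactly $\exp(-b^+\delta/2\epsilon)$, vanishes (again $\epsilon/\delta\to0$); the paper takes $\delta=\epsilon^{3/4}$. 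The same two-scale device plus the strong Markov property at $\tau_{\delta}$ gives $\Pr_0(Z^{\epsilon}(s)\leq0)\to0$ in case~\eqref{case:condi}, so no Green's-function or speed-measure estimate is needed there. In short: your outline is sound and follows the paper's strategy, but the step you flag as delicate is the entire content of the appendix, and it is resolved by introducing the intermediate level $\delta(\epsilon)$ rather than by the $\epsilon$-independent whole-line computation you propose.
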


In the converging/converging case $b^- \geq 0$, $b^+ \leq 0$, there is no extremal solution to the zero noise version of~\eqref{eq:sde:n2}. Informally, in both configurations the instantaneous drifts of each particle tend to bring the particles closer to each other. Therefore, in the small noise limit, the particles are expected to stick together and form a cluster; that is to say, the limit of the distribution of $X^{\epsilon}$ is expected to concentrate on $O_2$. The motion of the cluster is described in the following proposition.

\begin{prop}\label{prop:concon}
  Assume that $x^0 \in O_2$, and that $b^- \geq 0$, $b^+ \leq 0$.
  \begin{enumerate}[label=(\roman*), ref=\roman*, start=4]
    \item\label{case:concon} If $b^--b^+>0$, the process $X^{\epsilon}$ converges in $L^2_{\loc}(\Pr_{x^0})$ to $\rho x^- + (1-\rho)x^+$, where $\rho = -b^+/(b^--b^+)$ is the unique deterministic constant in $(0,1)$ such that, for all $t \geq 0$, $\rho x^-(t) + (1-\rho)x^+(t) \in O_2$.
    \item\label{case:conconl} If $b^-=b^+=0$, the process $X^{\epsilon}$ converges in $L^2_{\loc}(\Pr_{x^0})$ to $\rho x^- + (1-\rho)x^+$, where $\rho$ is the random process in $(0,1)$ defined by
    \begin{equation*}
      \forall t > 0, \qquad \rho(t) := \frac{1}{t} \int_{s=0}^t \ind{W_1(s) \leq W_2(s)} \dd s.
    \end{equation*}    
  \end{enumerate}
  Note that, in both cases, the small noise limit of $X^{\epsilon}$ takes its values in $O_2$.
\end{prop}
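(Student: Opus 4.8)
The plan is to reduce, as announced, to the scalar process $Z^\epsilon = X^\epsilon_1 - X^\epsilon_2$ of \eqref{eq:Z} and to the occupation time of the converging configuration $A^\epsilon(t) := \int_{s=0}^t \ind{Z^\epsilon(s) \leq 0}\dd s$. Since in \eqref{eq:sde:n2} the drift only toggles between $b(12)$ and $b(21)$ according to the sign of $Z^\epsilon$, integrating gives the exact decomposition
\[
  X^\epsilon(t) = x^0 + b(12)A^\epsilon(t) + b(21)(t-A^\epsilon(t)) + \sqrt{2\epsilon}W(t).
\]
Because $x^-(t)-x^+(t) = (b(12)-b(21))t$, subtracting the candidate limit $\rho x^-(t)+(1-\rho)x^+(t)$ yields $X^\epsilon(t) - (\rho x^-(t)+(1-\rho)x^+(t)) = (b(12)-b(21))(A^\epsilon(t)-\rho t) + \sqrt{2\epsilon}W(t)$, and similarly in case~\eqref{case:conconl}. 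The Brownian contribution satisfies $\Exp_{x^0}(\sup_{t\leq T}2\epsilon|W(t)|^2)\to 0$ by Doob's inequality, so the whole proposition reduces to controlling $A^\epsilon(t)-\rho t$ (resp. $A^\epsilon(t)-\rho(t)t$) in $L^2_{\loc}(\Pr_{x^0})$.

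For case~\eqref{case:concon}, the first step is an elementary affine identity linking $A^\epsilon$ to $Z^\epsilon$. Writing $\ell(Z^\epsilon) = b^+ + (b^--b^+)\ind{Z^\epsilon\leq 0}$ in \eqref{eq:Z} and using $z^0=0$ gives $Z^\epsilon(t) = b^+t + (b^--b^+)A^\epsilon(t) + 2\sqrt{\epsilon}B(t)$; since $\rho(b^--b^+)=-b^+$ by definition of $\rho$, this rearranges into
\[
  A^\epsilon(t)-\rho t = \frac{Z^\epsilon(t) - 2\sqrt{\epsilon}B(t)}{b^--b^+}.
\]
It therefore suffices to prove $\Exp_{x^0}(\sup_{t\leq T}(Z^\epsilon(t))^2)\to 0$. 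This is the heart of the argument, and the main obstacle: the naive estimate $|Z^\epsilon(t)|\leq \|\ell\|_\infty t + 2\sqrt{\epsilon}|B(t)|$ is of no use, and one must exploit the restoring sign of the drift. In the converging/converging case $z\ell(z)\leq 0$ for every $z$ (for $z>0$ one has $\ell=b^+\leq 0$, and for $z\leq 0$ one has $\ell=b^-\geq 0$), so It\^o's formula gives $\dd(Z^\epsilon)^2 = (2Z^\epsilon\ell(Z^\epsilon)+4\epsilon)\dd t + 4\sqrt{\epsilon}Z^\epsilon\dd B$ with nonpositive drift apart from the $4\epsilon$ term, whence $\Exp_{x^0}((Z^\epsilon(t))^2)\leq 4\epsilon t$. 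Injecting this pointwise bound into the Burkholder--Davis--Gundy inequality for the martingale $4\sqrt{\epsilon}\int_{s=0}^\cdot Z^\epsilon\dd B$ controls $\Exp_{x^0}(\sup_{t\leq T}(Z^\epsilon(t))^2)$ by a constant multiple of $\epsilon T$, which vanishes as $\epsilon\dto 0$.

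Case~\eqref{case:conconl} is then immediate: when $b^-=b^+=0$ the drift $\ell$ vanishes identically, so $Z^\epsilon(t)=2\sqrt{\epsilon}B(t)$ and the sign of $Z^\epsilon(s)$ coincides with that of $B(s)$, hence with the event $\{W_1(s)\leq W_2(s)\}$ since $W_1-W_2=\sqrt{2}B$. Thus $A^\epsilon(t)$ does not depend on $\epsilon$ and equals $t\rho(t)$ with $\rho(t)$ as in the statement, so the decomposition of the first paragraph collapses to $X^\epsilon(t)-(\rho(t)x^-(t)+(1-\rho(t))x^+(t)) = \sqrt{2\epsilon}W(t)$, which tends to $0$ in $L^2_{\loc}(\Pr_{x^0})$.

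Finally, that the limit takes values in $O_2$ follows from $x^0\in O_2$ together with the identity $\rho b^- + (1-\rho)b^+=0$ in case~\eqref{case:concon} (resp. the equality of the two coordinates of $b(12)$ and of $b(21)$ when $b^-=b^+=0$), which forces the two coordinates of the limiting velocity to coincide at all times. Everything except the supremum-in-time estimate on $Z^\epsilon$ is bookkeeping resting on the affine identity and Doob's inequality; that estimate, powered by the sign condition $z\ell(z)\leq 0$ and a single application of Burkholder--Davis--Gundy, is where the converging/converging mechanism really enters.
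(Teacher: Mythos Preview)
Your proof is correct and follows essentially the same route as the paper: the paper also reduces to the occupation time $\zeta^\epsilon$ (your $A^\epsilon$) via the affine decomposition of $X^\epsilon$, then to the scalar process $Z^\epsilon$ via the identity $Z^\epsilon(t)=b^-\zeta^\epsilon(t)+b^+(t-\zeta^\epsilon(t))+2\sqrt{\epsilon}B(t)$, and finally bounds $\Exp_0(\sup_{t\le T}|Z^\epsilon(t)|^2)$ by a constant times $\epsilon T$ using It\^o's formula together with $z\ell(z)\le 0$ (the paper packages this as Lemma~\ref{lem:zeta}\eqref{case:concon:zeta},\eqref{case:conconl:zeta} and Proposition~\ref{prop:Z}\eqref{case:concon:z}, and uses Doob's inequality plus the It\^o isometry where you invoke BDG, but the content is the same).
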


In other words, in case~\eqref{case:concon}, the cluster has a deterministic and constant velocity $v$ given by
\begin{equation*}
  v = \rho b_1(12) + (1-\rho) b_1(21) = \frac{b_2(21)b_1(12) - b_2(12)b_1(21)}{b_1(12)-b_2(12) - b_1(21)+b_2(21)}.
\end{equation*} 
In case~\eqref{case:conconl}, both particles have the same instantaneous drift in each of the two configurations, and the instantaneous drift of the cluster is a random linear interpolation of these drifts, with a coefficient $\rho(t)$ distributed according to the Arcsine law.

A common feature of Propositions~\ref{prop:di} and~\ref{prop:concon} is that, in all cases, the small noise limit of $X^{\epsilon}(t)$ is a linear interpolation of $x^-(t)$ and $x^+(t)$ with coefficients $\rho(t), 1-\rho(t) \in [0,1]$. Depending on the case at stake, $\rho(t)$ exhibits a wide range of various behaviours: in case~\eqref{case:didi}, it is random in $\{0,1\}$ and constant in time, in cases~\eqref{case:condi} and~\eqref{case:dicon} it is deterministic in $\{0,1\}$ and constant in time, in case~\eqref{case:concon} it is deterministic in $(0,1)$ and constant in time, and in case~\eqref{case:conconl} it is random in $(0,1)$ and nonconstant in time. 

In view of~\eqref{eq:sde:n2}, $\rho(t)$ appears as the natural small noise limit of the quantity $\zeta^{\epsilon}(t)/t$, where $\zeta^{\epsilon}$ denotes the occupation time of $X^{\epsilon}$ in the configuration $(12)$:
\begin{equation*}
  \forall t \geq 0, \qquad \zeta^{\epsilon}(t) := \int_{s=0}^t \ind{X^{\epsilon}_1(s) \leq X^{\epsilon}_2(s)} \dd s = \int_{s=0}^t \ind{Z^{\epsilon}(s) \leq 0} \dd s,
\end{equation*}
where we recall that $Z^{\epsilon} = X^{\epsilon}_1 - X^{\epsilon}_2$ solves~\eqref{eq:Z}. Indeed, Propositions~\ref{prop:di} and~\ref{prop:concon} easily stem from the following description of the small noise limit of the continuous process $\zeta^{\epsilon}$.

\begin{lem}\label{lem:zeta}
  Assume that $x^0 \in O_2$.
  \begin{enumerate}[label=(\roman*), ref=\roman*]
    \item\label{case:didi:zeta} If $b^- < 0$, $b^+ > 0$, then $\zeta^{\epsilon}$ converges in distribution to the process $\rho t$, where $\rho$ is a Bernoulli variable with parameter $-b^-/(b^+-b^-)$.
    \item\label{case:condi:zeta} If $b^- \geq 0$, $b^+ > 0$, then $\zeta^{\epsilon}$ converges in $L^1_{\loc}(\Pr_{x^0})$ to $0$.
    \item\label{case:dicon:zeta} If $b^- < 0$, $b^+ \leq 0$, then $\zeta^{\epsilon}$ converges in $L^1_{\loc}(\Pr_{x^0})$ to $t$.
    \item\label{case:concon:zeta} If $b^- \geq 0$, $b^+ \leq 0$ and $b^--b^+>0$, then $\zeta^{\epsilon}$ converges in $L^2_{\loc}(\Pr_{x^0})$ to $\rho t$, where $\rho = -b^+/(b^--b^+)$.
    \item\label{case:conconl:zeta} If $b^-=b^+=0$, then for all $t \geq 0$,
    \begin{equation*}
      \zeta^{\epsilon}(t) = \int_{s=0}^t \ind{W_1(s) \leq W_2(s)}\dd s.
    \end{equation*}
  \end{enumerate}
\end{lem}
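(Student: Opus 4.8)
The whole statement concerns the scalar process $Z^{\epsilon}$ solving~\eqref{eq:Z} started from $z^0=0$ (because $x^0\in O_2$ forces $x^0_1=x^0_2$), together with its occupation time $\zeta^{\epsilon}(t)=\int_{s=0}^t\ind{Z^{\epsilon}(s)\leq0}\,\dd s$. Case~\ref{case:conconl:zeta} requires no limiting procedure: when $b^-=b^+=0$ we have $\ell\equiv0$, so $Z^{\epsilon}(s)=2\sqrt{\epsilon}B(s)$ and $\ind{Z^{\epsilon}(s)\leq0}=\ind{B(s)\leq0}=\ind{W_1(s)\leq W_2(s)}$, which is exactly the claimed identity. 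The plan is then to split the remaining four cases into two groups treated by two different mechanisms: a direct Lyapunov estimate for the converging/converging case, and a scaling-to-long-time argument for the three cases involving escape to infinity.

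For case~\ref{case:concon:zeta} I would avoid any ergodic input. Writing $\int_{s=0}^t\ell(Z^{\epsilon}(s))\,\dd s=(b^--b^+)\zeta^{\epsilon}(t)+b^+t$, equation~\eqref{eq:Z} gives the identity
\[ Z^{\epsilon}(t)=(b^--b^+)\,\zeta^{\epsilon}(t)+b^+t+2\sqrt{\epsilon}\,B(t), \]
so that, since $b^--b^+>0$ and $\rho=-b^+/(b^--b^+)$,
\[ \zeta^{\epsilon}(t)-\rho t=\frac{Z^{\epsilon}(t)-2\sqrt{\epsilon}\,B(t)}{b^--b^+}. \]
Because $2\sqrt{\epsilon}\,B\to0$ in $L^2_{\loc}(\Pr_{x^0})$, it suffices to show $Z^{\epsilon}\to0$ in $L^2_{\loc}(\Pr_{x^0})$. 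To that end I would apply It\^o's formula to $(Z^{\epsilon})^2$, using that the diffusion coefficient is the constant $2\sqrt{\epsilon}$ and that the converging/converging assumption $b^-\geq0$, $b^+\leq0$ is precisely what makes $z\ell(z)=b^-z\ind{z\leq0}+b^+z\ind{z>0}\leq0$ for every $z$. This yields $\dd(Z^{\epsilon})^2=(2Z^{\epsilon}\ell(Z^{\epsilon})+4\epsilon)\,\dd t+4\sqrt{\epsilon}\,Z^{\epsilon}\,\dd B$ with a nonpositive drift apart from the $4\epsilon$ term, hence $\Exp_{x^0}[(Z^{\epsilon}(t))^2]\leq4\epsilon t$; plugging this into Doob's $L^2$ inequality for the martingale $\int_{s=0}^{\cdot}Z^{\epsilon}\,\dd B$ controls $\Exp_{x^0}[\sup_{t\in[0,T]}(Z^{\epsilon}(t))^2]=O(\epsilon)$, giving the desired convergence.

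For cases~\ref{case:didi:zeta},~\ref{case:condi:zeta} and~\ref{case:dicon:zeta} the common engine is the scale invariance $\ell(\lambda z)=\ell(z)$ for $\lambda>0$. Brownian scaling then shows that $(\epsilon Z^1(t/\epsilon))_{t\geq0}$ solves the same equation~\eqref{eq:Z} as $Z^{\epsilon}$ (here $z^0=0$ is essential to match initial conditions), so by the pathwise uniqueness of Proposition~\ref{prop:X} the two processes have the same law, and consequently $(\zeta^{\epsilon}(t))_{t\geq0}$ has the same law as $(\epsilon\int_{u=0}^{t/\epsilon}\ind{Z^1(u)\leq0}\,\dd u)_{t\geq0}$. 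This turns the small-noise limit $\epsilon\dto0$ into the long-time behaviour $T=t/\epsilon\uto\infty$ of the occupation fractions of the single diffusion $Z^1$, whose fate is read off its scale function $s$, the explicit solution of $2s''+\ell s'=0$, namely $s'(z)=\exp(-\tfrac12\int_0^z\ell)$ since $\ell$ is piecewise constant. In case~\ref{case:condi:zeta} ($b^-\geq0$, $b^+>0$) one checks $s(+\infty)<\infty$ and $s(-\infty)=-\infty$, so $Z^1\to+\infty$ almost surely and $\Pr(Z^1(u)\leq0)\to0$; a Ces\`aro argument then gives $\Exp_{x^0}[\zeta^{\epsilon}(T)]=\epsilon\int_{u=0}^{T/\epsilon}\Pr(Z^1(u)\leq0)\,\dd u\to0$, which, as $\zeta^{\epsilon}$ is nondecreasing, is exactly $L^1_{\loc}$ convergence to $0$; case~\ref{case:dicon:zeta} is symmetric, applied to $t-\zeta^{\epsilon}(t)$. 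In the diverging/diverging case~\ref{case:didi:zeta} both $s(\pm\infty)$ are finite, so $Z^1$ converges almost surely to $+\infty$ or $-\infty$; solving the elementary exit problem $2h''+\ell h'=0$ on $[-a,a]$ and letting $a\to\infty$ yields $\Pr(Z^1\to-\infty)=-b^-/(b^+-b^-)$. On $\{Z^1\to+\infty\}$ the total time spent in $(-\infty,0]$ is almost surely finite, so $\epsilon\int_{u=0}^{t/\epsilon}\ind{Z^1(u)\leq0}\,\dd u\to0$ uniformly on compacts, and symmetrically the limit is $t$ on $\{Z^1\to-\infty\}$; hence the scaled occupation process converges almost surely, and therefore in distribution, to $\rho t$ with $\rho=\ind{Z^1\to-\infty}$ Bernoulli of the announced parameter.

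The delicate points are concentrated in this last group. First, the scale-function computation must be carried out with care in the degenerate sub-cases where one of $b^-,b^+$ vanishes (for instance $b^-=0$, $b^+>0$ in case~\ref{case:condi:zeta}), where $Z^1$ is driftless on one side; verifying that $s(-\infty)=-\infty$ nonetheless holds there is exactly what still forces transience to $+\infty$. Second, in case~\ref{case:didi:zeta} the genuine work is to establish that the one-sided occupation time of the transient diffusion $Z^1$ is almost surely finite and to pin down the escape probability exactly, and then to upgrade the resulting almost sure convergence of the scaled process into convergence in distribution of $\zeta^{\epsilon}$ at the process level. I expect this scale-function/exit-probability analysis, together with the finiteness of the one-sided occupation times, to be the main technical hurdle, the Lyapunov estimate of case~\ref{case:concon:zeta} being comparatively routine.
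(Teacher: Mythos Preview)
Your argument is correct. Cases~\eqref{case:concon:zeta} and~\eqref{case:conconl:zeta} match the paper exactly: the It\^o/Lyapunov bound $\Exp_0[\sup_{[0,T]}(Z^{\epsilon})^2]=O(\epsilon)$ followed by the algebraic identity $Z^{\epsilon}(t)=(b^--b^+)\zeta^{\epsilon}(t)+b^+t+2\sqrt{\epsilon}B(t)$ is precisely how the paper handles~\eqref{case:concon:zeta} (see the proof of Proposition~\ref{prop:Z}\,\eqref{case:concon:z} and the short argument right after it).

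For cases~\eqref{case:didi:zeta}--\eqref{case:dicon:zeta}, however, you take a genuinely different route. The paper works directly at the level of $Z^{\epsilon}$: it introduces the hitting time $\tau_{\delta}$ of $\{\pm\delta\}$, solves the scale-function exit problem on $[-\delta,\delta]$ explicitly (Lemma~\ref{lem:taudelta}), applies Laplace's method with $\delta=\epsilon^{3/4}$ to extract the limiting exit probability (Corollary~\ref{cor:taudelta}), and then uses the strong Markov property together with the explicit hitting probability $\exp(-b^+\delta/2\epsilon)$ to show that after exit $Z^{\epsilon}$ stays on the chosen side. Your approach instead exploits the homogeneity $\ell(\lambda z)=\ell(z)$ to rescale $(\epsilon Z^1(t/\epsilon))_{t\geq0}\stackrel{d}{=}Z^{\epsilon}$, reducing the small-noise limit to the long-time behaviour of the fixed diffusion $Z^1$, whose transience and escape probabilities you read off the scale function. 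This is cleaner in one dimension and, pleasingly, anticipates exactly the device the paper deploys later in Subsection~\ref{ss:cluster} for the $n$-particle cluster velocity. What the paper's approach buys in exchange is self-containment: it avoids invoking the general Feller test/transience dichotomy and the (true but slightly delicate) fact that a one-dimensional diffusion escaping to $+\infty$ has a.s.\ finite occupation time of $(-\infty,0]$. Both arguments ultimately rest on the same scale-function computation; yours packages it as classical one-dimensional diffusion theory, the paper's as an elliptic boundary-value problem plus Laplace asymptotics.
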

\begin{proof}
  The proofs of cases~\eqref{case:didi:zeta}, \eqref{case:condi:zeta} and~\eqref{case:dicon:zeta} are given in Appendix~\ref{app:n2}. The proof of case~\eqref{case:concon:zeta} is an elementary computation and is given in Subsection~\ref{ss:Z} below. In case~\eqref{case:conconl:zeta}, there is nothing to prove.
\end{proof}

\begin{rk}\label{rk:blum}
  In cases~\eqref{case:condi:zeta}, \eqref{case:dicon:zeta}, and~\eqref{case:concon:zeta} above, the convergence is stated either in $L^1_{\loc}(\Pr_{x^0})$ or in $L^2_{\loc}(\Pr_{x^0})$ as these modes of convergence appear most naturally in the proof. However, all our arguments can easily be extended to show that all the convergences hold in $L^p_{\loc}(\Pr_{x^0})$, for all $p \in [1,+\infty)$. As a consequence, all the convergences in Proposition~\ref{prop:di} and~\ref{prop:concon}, except in case~\eqref{case:didi}, actually hold in $L^p_{\loc}(\Pr_{x^0})$, for all $p \in [1,+\infty)$. 
  
  On the contrary, the convergence in the diverging/diverging case~\eqref{case:didi:zeta} cannot hold in probability. Indeed, let us assume by contradiction that there exists $T>0$ such that the convergence in case~\eqref{case:didi:zeta} of Lemma~\ref{lem:zeta} holds in probability in $C([0,T],\R)$. Then, for all $t \in [0,T]$, $\zeta^{\epsilon}(t)$ converges in probability to $\rho t$. Let us fix $t \in (0,T]$. By Proposition~\ref{prop:X}, for all $\epsilon > 0$, the random variable $\zeta^{\epsilon}(t)$ is measurable with respect to the $\sigma$-field $\mathcal{F}_t$ generated by $(W(s))_{s \in [0,t]}$. Thus, we deduce that the random variable $\rho$ is measurable with respect to $\mathcal{F}_t$. As a consequence, $\rho$ is measurable with respect to $\mathcal{F}_{0^+} := \cap_{t > 0} \mathcal{F}_t$, which is contradictory with the Blumenthal zero-one law for the Brownian motion $W$.
  
  We finally mention that in cases~\eqref{case:didi:zeta}, \eqref{case:condi:zeta}, \eqref{case:dicon:zeta} and~\eqref{case:concon:zeta}, the small noise limit of the process $\zeta^{\epsilon}$ is a Markov process, which is not the case for the process $\zeta^{\epsilon}$ itself.
\end{rk}

\sk
Let us now address the case $x^0 \not\in O_2$ of particles with distinct initial positions. Let $\sigma = \Sigma x^0$. If $b_{\sigma(1)}(\sigma) \leq b_{\sigma(2)}(\sigma)$, a pair of particles travelling at constant velocity vector $b(\sigma)$ with initial positions $x^0$ never collides, and the natural small noise limit of $X^{\epsilon}$ is given by $x(t) = x^0 + b(\sigma) t$, for all $t \geq 0$.

If $b_{\sigma(1)}(\sigma) > b_{\sigma(2)}(\sigma)$, a pair of particles travelling at constant velocity vector $b(\sigma)$ with initial positions $x^0$ collides at time $t^*(x^0) := -(x^0_1 - x^0_2)/(b_1(\sigma)-b_2(\sigma)) \in (0,+\infty)$. The natural small noise limit of $X^{\epsilon}$ is now described by $x(t) = x^0 + b(\sigma) t$ for $t < t^*(x^0)$, and for $t \geq t^*(x^0)$, $x(t)$ is the small noise limit of $X'^{\epsilon}(t-t^*(x^0))$, where $X'^{\epsilon}$ is a copy of $X^{\epsilon}$ started at $x^0 + b(\sigma) t^*(x^0) \in O_2$. In that case, at least the configuration $\sigma$ is converging, therefore there is neither random selection of a trajectory as in case~\eqref{case:didi}, nor random and nonconstant velocity of the cluster as in case~\eqref{case:conconl}.

These statements are straightforward consequences of the description of the small noise limit of the process $Z^{\epsilon}$ with $z^0 \not= 0$ carried out in Corollary~\ref{cor:Z} below.

\subsection{The reduced process}\label{ss:Z} By Veretennikov~\cite{veretennikov}, strong existence and pathwise uniqueness hold for~\eqref{eq:Z}; therefore, for all $\epsilon > 0$, $Z^{\epsilon}$ is adaptated to the filtration generated by the Brownian motion $B$. As a consequence, the probability of a measurable event $A$ with respect to the $\sigma$-field generated by $(B(s))_{s \in [0,t]}$ for some $t \geq 0$ shall be abusively denoted by $\Pr_{z^0}(A)$ instead of $\Pr_{x^0}(A)$.

\sk
To describe the small noise limit of $Z^{\epsilon}$, we define $z^-(t) = b^-t$ and $z^+(t) = b^+t$. Let us begin with the case $z^0=0$, which corresponds to $x^0 \in O_2$.

\begin{prop}\label{prop:Z}
  Assume that $z^0=0$. Then,
  \begin{enumerate}[label=(\roman*), ref=\roman*]
    \item\label{case:didi:z} if $b^- < 0$ and $b^+ > 0$, then $Z^{\epsilon}$ converges in distribution to $\rho z^- + (1-\rho) z^+$, where $\rho$ is a Bernoulli variable of parameter $-b^-/(b^+-b^-)$;
    \item\label{case:condi:z} if $b^- \geq 0$ and $b^+ > 0$, then $Z^{\epsilon}$ converges to $z^+$ in $L^1_{\loc}(\Pr_0)$;
    \item\label{case:dicon:z} if $b^- < 0$ and $b^+ \leq 0$, then $Z^{\epsilon}$ converges to $z^-$ in $L^1_{\loc}(\Pr_0)$;
    \item\label{case:concon:z} if $b^- \geq 0$ and $b^+ \leq 0$, then $Z^{\epsilon}$ converges to $0$ in $L^2_{\loc}(\Pr_0)$; more precisely,
    \begin{equation*}
      \forall T>0, \qquad \Exp_0\left(\sup_{t \in [0,T]} |Z^{\epsilon}(t)|^2\right) \leq (8\sqrt{2}+4)\epsilon T.
    \end{equation*}
  \end{enumerate}
\end{prop}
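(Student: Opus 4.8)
The plan is to reduce all four assertions to the behaviour of the occupation time $\zeta^{\epsilon}(t) = \int_{s=0}^t \ind{Z^{\epsilon}(s)\le 0}\dd s$ described in Lemma~\ref{lem:zeta}, together with the elementary fact that the noise term vanishes. Writing $\ell(z) = b^+ + (b^--b^+)\ind{z\le 0}$ and integrating~\eqref{eq:Z} with $z^0=0$ yields the identity
\begin{equation*}
  Z^{\epsilon}(t) = z^+(t) + (b^--b^+)\zeta^{\epsilon}(t) + 2\sqrt{\epsilon}B(t), \qquad t \ge 0,
\end{equation*}
where $z^+(t)=b^+t$. Since $z^-(t)-z^+(t)=(b^--b^+)t$, one has $\rho z^-(t)+(1-\rho)z^+(t)=z^+(t)+\rho(b^--b^+)t$, so the three regimes $\zeta^{\epsilon}(t)\to\rho t$, $\to 0$, $\to t$ translate directly into the announced limits $\rho z^-+(1-\rho)z^+$, $z^+$, $z^-$. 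Throughout I use that $2\sqrt{\epsilon}B\to 0$ as $\epsilon\dto 0$, uniformly on each $[0,T]$ both almost surely and in every $L^p_{\loc}(\Pr_0)$, because $\sup_{t\in[0,T]}|2\sqrt{\epsilon}B(t)|=2\sqrt{\epsilon}\sup_{t\in[0,T]}|B(t)|$.

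For cases~\eqref{case:condi:z} and~\eqref{case:dicon:z} I would simply combine the identity with the $L^1_{\loc}$ convergence of $\zeta^{\epsilon}$ from Lemma~\ref{lem:zeta}\eqref{case:condi:zeta}, \eqref{case:dicon:zeta}: the triangle inequality gives $\Exp_0(\sup_{t\le T}|Z^{\epsilon}(t)-z^{\pm}(t)|)\le |b^--b^+|\,\Exp_0(\sup_{t\le T}|\zeta^{\epsilon}(t)-\rho t|)+2\sqrt{\epsilon}\,\Exp_0(\sup_{t\le T}|B(t)|)\to 0$, with $\rho=0$, resp.\ $\rho=1$. Case~\eqref{case:didi:z} is the only one where randomness survives: here Lemma~\ref{lem:zeta}\eqref{case:didi:zeta} gives $\zeta^{\epsilon}\to\rho t$ only in distribution in $C([0,T],\R)$. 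Since $2\sqrt{\epsilon}B\to 0$ in probability for the uniform norm, the pair $(\zeta^{\epsilon},2\sqrt{\epsilon}B)$ converges in distribution to $(\rho t,0)$ (the second coordinate converging to a constant), and applying the continuous affine map $(f,g)\mapsto z^++(b^--b^+)f+g$ on $C([0,T],\R)$ yields the convergence in distribution of $Z^{\epsilon}$ to $z^++(b^--b^+)\rho t=\rho z^-+(1-\rho)z^+$, which is the claim.

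The remaining case~\eqref{case:concon:z} I would prove directly, as it is self-contained and is in fact the source of Lemma~\ref{lem:zeta}\eqref{case:concon:zeta}. The key observation is the sign property: when $b^-\ge 0$ and $b^+\le 0$, one has $z\ell(z)\le 0$ for every $z\in\R$. Itô's formula applied to $|Z^{\epsilon}|^2$ gives
\begin{equation*}
  |Z^{\epsilon}(t)|^2 = 2\int_{s=0}^t Z^{\epsilon}(s)\ell(Z^{\epsilon}(s))\dd s + 4\sqrt{\epsilon}\int_{s=0}^t Z^{\epsilon}(s)\dd B(s) + 4\epsilon t.
\end{equation*}
Taking expectations and discarding the nonpositive drift term yields the \emph{pointwise} bound $\Exp_0(|Z^{\epsilon}(t)|^2)\le 4\epsilon t$. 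To upgrade this to a supremum bound, I would take $\sup_{t\le T}$ in the display above, drop the drift term, and estimate the martingale $M(t)=\int_{s=0}^t Z^{\epsilon}(s)\dd B(s)$ by Doob's $L^2$ inequality followed by Jensen: $\Exp_0(\sup_{t\le T}|M(t)|)\le 2\,\Exp_0(\langle M\rangle_T)^{1/2}=2\bigl(\int_{s=0}^T\Exp_0(|Z^{\epsilon}(s)|^2)\dd s\bigr)^{1/2}$. Inserting the pointwise bound $\Exp_0(|Z^{\epsilon}(s)|^2)\le 4\epsilon s$ at this stage — rather than the supremum bound, which would make the estimate circular — gives $\Exp_0(\sup_{t\le T}|M(t)|)\le 2\sqrt{2}\,\sqrt{\epsilon}\,T$, whence $\Exp_0(\sup_{t\le T}|Z^{\epsilon}(t)|^2)\le 4\sqrt{\epsilon}\cdot 2\sqrt{2}\sqrt{\epsilon}T+4\epsilon T=(8\sqrt{2}+4)\epsilon T$. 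This argument uses only $b^-\ge 0\ge b^+$, so it also covers the degenerate subcase $b^-=b^+=0$.

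I expect the main obstacle to lie in case~\eqref{case:didi:z}: the substance of the random selection is hidden in Lemma~\ref{lem:zeta}\eqref{case:didi:zeta}, and the delicate point at the level of Proposition~\ref{prop:Z} is to pass from distributional convergence of $\zeta^{\epsilon}$ to that of $Z^{\epsilon}$ in path space, which requires the noise to vanish in probability for the uniform norm (a Slutsky-type argument) and cannot be strengthened to convergence in probability, as Remark~\ref{rk:blum} shows via the Blumenthal zero-one law. In case~\eqref{case:concon:z} the only subtlety is the bookkeeping that keeps the martingale estimate non-circular and produces the constant $8\sqrt{2}+4$.
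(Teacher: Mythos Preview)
Your proposal is correct and follows essentially the same route as the paper: cases~\eqref{case:didi:z}--\eqref{case:dicon:z} are reduced to Lemma~\ref{lem:zeta} via the identity $Z^{\epsilon}(t)=b^-\zeta^{\epsilon}(t)+b^+(t-\zeta^{\epsilon}(t))+2\sqrt{\epsilon}B(t)$, and case~\eqref{case:concon:z} is handled directly by It\^o's formula, the sign property $z\ell(z)\le 0$, and Doob's inequality combined with the pointwise bound $\Exp_0(|Z^{\epsilon}(t)|^2)\le 4\epsilon t$. The only point the paper makes more explicit is a localization step (stopping at $\tau_L=\inf\{t:|Z^{\epsilon}(t)|\ge L\}$ and passing to the limit via Fatou) to justify that $M^{\epsilon}$ is a true martingale before ``taking expectations'' and before applying Doob; you should insert this, but otherwise your argument and the constant $(8\sqrt{2}+4)$ match the paper exactly.
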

\begin{proof}
  Since $Z^{\epsilon}(t) = b^- \zeta^{\epsilon}(t) + b^+ (t - \zeta^{\epsilon}(t)) + 2\sqrt{\epsilon}B(t)$, cases~\eqref{case:didi:z}, \eqref{case:condi:z} and~\eqref{case:dicon:z} are straightforward consequences of the corresponding statements in Lemma~\ref{lem:zeta}, the proofs of which are given in Appendix~\ref{app:n2}. 
  
  We now give a direct proof of case~\eqref{case:concon:z}. By the Itô formula, for all $t \geq 0$,
  \begin{equation*}
    |Z^{\epsilon}(t)|^2 = 2\int_{s=0}^t Z^{\epsilon}(s)\ell(Z^{\epsilon}(s))\dd s + 4\sqrt{\epsilon}\int_{s=0}^t Z^{\epsilon}(s) \dd B(s) + 4\epsilon t.
  \end{equation*}
  If $b^+ \leq 0$ and $b^- \geq 0$, then for all $z \in \R$ one has $z\ell(z) \leq 0$, therefore
  \begin{equation*}
    |Z^{\epsilon}(t)|^2 \leq 4\sqrt{\epsilon}\int_{s=0}^t Z^{\epsilon}(s) \dd B(s) + 4\epsilon t.
  \end{equation*}
  
  For all $t \geq 0$, let us define
  \begin{equation*}
    M^{\epsilon}(t) = \int_{s=0}^t Z^{\epsilon}(s) \dd B(s);
  \end{equation*}
  and for all $L>0$, let $\tau_L := \inf\{t \geq 0 : |Z^{\epsilon}(t)| \geq L\}$. The process $(M^{\epsilon}(t \wedge \tau_L))_{t \geq 0}$ is a martingale, therefore, for all $t \geq 0$, $\Exp_0(|Z^{\epsilon}(t \wedge \tau_L)|^2) \leq 4\epsilon \Exp_0(t \wedge \tau_L) \leq 4\epsilon t$, and by the Fatou lemma, $\Exp_0(|Z^{\epsilon}(t)|^2) \leq 4\epsilon t$. As a consequence, $(M^{\epsilon}(t))_{t \geq 0}$ is a martingale. For all $T>0$,
  \begin{equation*}
    \begin{aligned}
      \Exp_0\left(\sup_{t \in [0,T]} |Z^{\epsilon}(t)|^2\right) & \leq 4\sqrt{\epsilon}\Exp_0\left(\sup_{t \in [0,T]}  M^{\epsilon}(t)\right) + 4\epsilon T\\
      & \leq 4\sqrt{\epsilon}\sqrt{\Exp_0\left(\sup_{t \in [0,T]}  M^{\epsilon}(t)^2\right)} + 4\epsilon T\\
      & \leq 8\sqrt{\epsilon}\sqrt{\Exp_0\left(M^{\epsilon}(T)^2\right)} + 4\epsilon T\\
      & = 8\sqrt{\epsilon}\sqrt{\Exp_0\left(\int_0^T Z^{\epsilon}(s)^2\dd s\right)} + 4\epsilon T\\
      & \leq 8\sqrt{\epsilon}\sqrt{\int_0^T 4 \epsilon s \dd s} + 4\epsilon T = (8\sqrt{2}+4) \epsilon T,
    \end{aligned}
  \end{equation*}
  where we have used the Cauchy-Schwarz inequality at the second line, the Doob inequality at the third line and the Itô isometry at the fourth line. This completes the proof of case~\eqref{case:concon:z}.
\end{proof}

In case~\eqref{case:concon:z} of Lemma~\ref{lem:zeta}, the computation of the small noise limit of $\zeta^{\epsilon}$ is straightforward.

\begin{proof}[Proof of case~\eqref{case:concon:zeta} in Lemma~\ref{lem:zeta}]
  Let $T>0$. By case~\eqref{case:concon:z} in Proposition~\ref{prop:Z}, if $x^0 \in O_2$ and $b^- \geq 0$, $b^+ \leq 0$, then $\lim_{\epsilon \dto 0} b^- \zeta^{\epsilon} + b^+ (t - \zeta^{\epsilon}) = 0$ in $L^2_{\loc}(\Pr_{x^0})$. If $b^--b^+>0$ in addition, then this relation yields $\lim_{\epsilon \dto 0} \zeta^{\epsilon} = \rho t$ in $L^2_{\loc}(\Pr_{x^0})$, with $\rho = -b^+/(b^--b^+)$.
\end{proof}

We now describe the small noise limit of $Z^{\epsilon}$ in the case $z^0 \not= 0$. Due to the same reasons as in Remark~\ref{rk:blum}, all the convergences below are stated in $L^1_{\loc}(\Pr_0)$ but can easily be extended to $L^p_{\loc}(\Pr_0)$ for all $p \in [1,+\infty)$. The proof of Corollary~\ref{cor:Z} is postponed to Appendix~\ref{app:n2}.

\begin{cor}\label{cor:Z}
  Assume that $z^0>0$. Let us define $t^* = +\infty$ if $b^+ \geq 0$, and $t^* := z^0/(-b^+)$ if $b^+ < 0$. Then $Z^{\epsilon}$ converges in $L^1_{\loc}(\Pr_{z^0})$ to the process $z^{\dto}$ defined by:
  \begin{equation*}
    \forall t \geq 0, \qquad z^{\dto}(t) := \left\{\begin{aligned}
      & z^0 + b^+t & \text{if $t < t^*$},\\
      & 0 & \text{if $t \geq t^*$ and $b^- \geq 0$},\\
      & b^-(t-t^*) & \text{if $t \geq t^*$ and $b^- < 0$}.
    \end{aligned}\right.
  \end{equation*}
  A symmetric statement holds if $z^0<0$.
\end{cor}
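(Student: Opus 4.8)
The plan is to decompose the trajectory at the first time the reduced process reaches the origin, treating the pre-collision and post-collision phases separately, the latter via the strong Markov property and the $z^0=0$ results of Proposition~\ref{prop:Z}. Assume $z^0>0$, the case $z^0<0$ being symmetric. Set $\tau^{\epsilon} := \inf\{t \geq 0 : Z^{\epsilon}(t) = 0\}$. As long as $Z^{\epsilon}(t)>0$ one has $\ell(Z^{\epsilon}(t))=b^+$, so integrating~\eqref{eq:Z} gives $Z^{\epsilon}(t) = z^0 + b^+ t + 2\sqrt{\epsilon}B(t)$ on $[0,\tau^{\epsilon}]$; in particular $\tau^{\epsilon}$ is the first zero of the affine-plus-small-noise process $z^0+b^+t+2\sqrt{\epsilon}B(t)$.

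First I would treat the pre-collision phase. If $b^+\geq 0$, then $t^*=+\infty$ and $z^0+b^+t \geq z^0>0$; a direct estimate shows that for fixed $T$ the hitting time $\tau^{\epsilon}$ exceeds $T$ with probability tending to $1$, so on that event $Z^{\epsilon}(t)=z^0+b^+t+2\sqrt{\epsilon}B(t)$ converges uniformly on $[0,T]$ to $z^{\dto}(t)=z^0+b^+t$, the complementary event being absorbed using the uniform boundedness of $b$ and the Cauchy--Schwarz inequality. If $b^+<0$, the same comparison shows $\tau^{\epsilon}\to t^*=z^0/(-b^+)$ in probability: since $z^0+b^+(t^*\mp\delta)=\pm|b^+|\delta$, once $2\sqrt{\epsilon}\sup_{[0,t^*+\delta]}|B|$ falls below $|b^+|\delta$ the process is positive at $t^*-\delta$ and negative at $t^*+\delta$. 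This yields $L^1_{\loc}$ convergence of $Z^{\epsilon}$ to $z^{\dto}(t)=z^0+b^+t$ on every interval $[0,T]$ with $T<t^*$.

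Next I would handle the post-collision phase when $b^+<0$. By Proposition~\ref{prop:X} and the strong Markov property, the shifted process $\bar Z^{\epsilon}(\cdot):=Z^{\epsilon}(\tau^{\epsilon}+\cdot)$ is again a solution to~\eqref{eq:Z} started at $Z^{\epsilon}(\tau^{\epsilon})=0$ and driven by $B(\tau^{\epsilon}+\cdot)-B(\tau^{\epsilon})$. Since $b^+\leq 0$, Proposition~\ref{prop:Z} applies and gives the small noise limit $\bar z$ of $\bar Z^{\epsilon}$, namely $\bar z(t)=0$ when $b^-\geq 0$ (case~\eqref{case:concon:z}) and $\bar z(t)=b^-t$ when $b^-<0$ (case~\eqref{case:dicon:z}); in both cases $\bar z$ is Lipschitz and $z^{\dto}(t)=\bar z(t-t^*)$ for $t\geq t^*$.

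The main difficulty, and the final step, is to glue the two phases across the random time $\tau^{\epsilon}$, which converges only in probability to the deterministic time $t^*$. Fixing $T>t^*$ and small $\delta>0$, I would split $[0,T]$ into $[0,t^*-\delta]$, $[t^*-\delta,t^*+\delta]$ and $[t^*+\delta,T]$. On the first interval the pre-collision analysis applies. On the last, writing $Z^{\epsilon}(t)=\bar Z^{\epsilon}(t-\tau^{\epsilon})$ and $z^{\dto}(t)=\bar z(t-t^*)$, I would bound $|Z^{\epsilon}(t)-z^{\dto}(t)|$ by $\sup_{s\in[0,T-\tau^{\epsilon}]}|\bar Z^{\epsilon}(s)-\bar z(s)|+\mathrm{Lip}(\bar z)\,|\tau^{\epsilon}-t^*|$, the first term controlled by the convergence of $\bar Z^{\epsilon}$ on a compact slightly larger than $[0,T-t^*]$ and the second by $\tau^{\epsilon}\to t^*$. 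The middle layer contributes at most the oscillations of $Z^{\epsilon}$ and of $z^{\dto}$ over an interval of length $2\delta$, which are $O(\delta)+2\sqrt{\epsilon}\,\mathrm{osc}_{[0,T]}(B)$; letting $\epsilon\dto 0$ and then $\delta\dto 0$ concludes, the rare events $\{|\tau^{\epsilon}-t^*|>\delta\}$ being absorbed through the boundedness of $b$ and Cauchy--Schwarz as before. I expect this gluing, converting convergence of the shifted process together with convergence of the random shift into uniform convergence on a fixed interval, to be the principal technical obstacle; the remaining estimates are routine.
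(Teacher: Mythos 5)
Your proposal is correct and follows essentially the same route as the paper's proof: decompose the trajectory at the first hitting time $\tau_{\epsilon}$ of the origin, use the strong Markov property at $\tau_{\epsilon}$ to reduce the post-collision phase to Proposition~\ref{prop:Z} started at $z^0=0$, and glue the two phases using the convergence in probability of $\tau_{\epsilon}$ to $t^*$ (the paper organizes the gluing through the intermediate process $z^{\dto}_{\epsilon}$ obtained by replacing $t^*$ with $\tau_{\epsilon}$, which is equivalent to your three-interval split). The only inessential difference is that you obtain $\tau_{\epsilon}\to t^*$ by a pathwise comparison with $z^0+b^+t+2\sqrt{\epsilon}B(t)$, whereas the paper reads it off the explicit Laplace transform of that hitting time.
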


\begin{rk} 
  For a given continuous and bounded function $u_0$ on $\R$, the function $u^{\epsilon}$ defined by
  \begin{equation*}
    \forall (t,z) \in [0,+\infty)\times\R, \qquad u^{\epsilon}(t,z) := \Exp_z(u_0(Z^{\epsilon}(t)))
  \end{equation*}
  is continuous on $[0,+\infty)\times\R$ owing to the Girsanov theorem and the boundedness of $\ell$. Following~\cite[Chapter~II]{fleson}, it is a viscosity solution to the parabolic Cauchy problem
  \begin{equation*}
    \left\{\begin{aligned}
      & \partial_t u^{\epsilon} - \ell(z) \partial_z u^{\epsilon} = 2 \epsilon \partial_{zz} u^{\epsilon},\\
      & u^{\epsilon}(0,\cdot) = u_0(\cdot).
    \end{aligned}\right.
  \end{equation*}
  Attanasio and Flandoli~\cite{flandoli} addressed the limit of $u^{\epsilon}$ when $\epsilon$ vanishes, for a particular function $\ell$ such that the corresponding hyperbolic Cauchy problem
  \begin{equation*}
    \left\{\begin{aligned}
      & \partial_t u^{\epsilon} - \ell(z) \partial_z u^{\epsilon} = 0,\\
      & u^{\epsilon}(0,\cdot) = u_0(\cdot),
    \end{aligned}\right.
  \end{equation*}
  admits several solutions. In the diverging/diverging case $b^+>0$, $b^-<0$, we recover their result of~\cite[Theorem~2.4]{flandoli} as $u^{\epsilon}$ converges pointwise to the function $u$ defined by 
  \begin{equation*}
    u(t,z) = \left\{\begin{aligned}
      & u_0(z+b^+t) & \text{if $z>0$},\\
      & u_0(z+b^-t) & \text{if $z<0$},\\
      & \frac{b^+}{b^+-b^-}u_0(b^+t) + \frac{-b^-}{b^+-b^-}u_0(b^-t) & \text{if $z=0$}.
    \end{aligned}\right.
  \end{equation*}
  Note that, in general, $u$ is discontinuous on the half line $z=0$.
  
  \begin{figure}[ht]
    \begin{center}
      \includegraphics{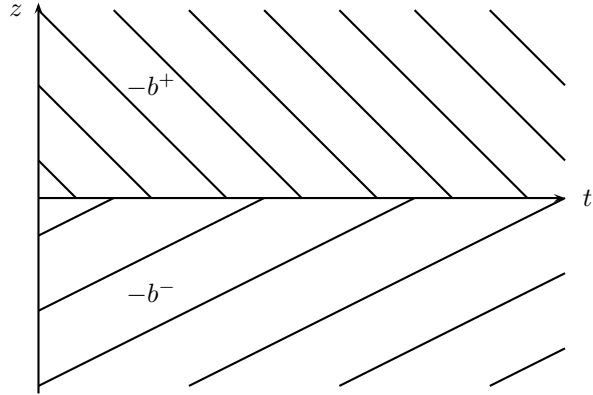}
      \caption{The characteristics of the conservation law in the diverging/diverging case. On the half line $z=0$, the value of $u$ is a linear interpolation of the values given by the upward characteristic and the downward characteristic.}
    \end{center}
  \end{figure}

  In the converging/converging case $b^+ \leq 0$, $b^- \geq 0$, $u^{\epsilon}$ converges pointwise to the function $u$ defined by
  \begin{equation*}
    u(t,z) = \left\{\begin{aligned}
      & u_0(z+b^+t) & \text{if $z > -b^+t$},\\
      & u_0(z+b^-t) & \text{if $z < -b^-t$},\\
      & u_0(0) & \text{if $-b^- t \leq z \leq -b^+ t$}.
    \end{aligned}\right.
  \end{equation*}
  Note that $u$ is continuous on $[0,+\infty)\times\R$, and constant on the cone $-b^- t \leq z \leq -b^+ t$.
  \begin{figure}[ht]
    \begin{center}
      \includegraphics{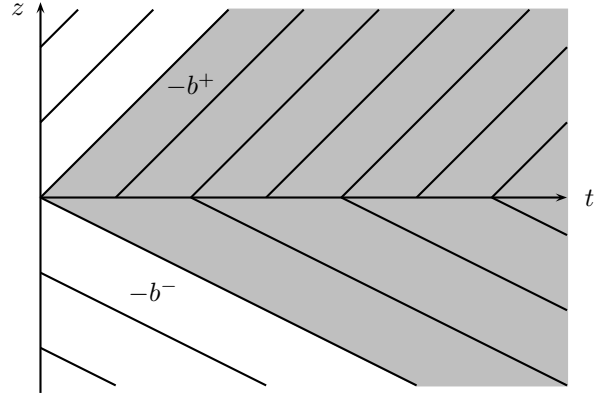}
      \caption{The characteristics in the converging/converging case. In the gray area, the value of $u$ is constant.}
    \end{center}
  \end{figure}
\end{rk}


\section{The rank-based case}\label{s:rank}

In this section, we assume that there exists a vector $b = (b_1, \ldots, b_n) \in \R^n$ such that, for all $\sigma \in S_n$, for all $i \in \{1, \ldots, n\}$, $b_{\sigma(i)}(\sigma) = b_i$. In other words, the instantaneous drift of the $i$-th particle at time $t$ only depends on the rank of $X^{\epsilon}_i(t)$ among $X^{\epsilon}_1(t), \ldots, X^{\epsilon}_n(t)$. We recall in Subsection~\ref{ss:rps} that, in this case, the increasing reordering of the particle system is a Brownian motion with constant drift, normally reflected at the boundary of the polyhedron $D_n := \{(y_1, \ldots, y_n) \in \R^n : y_1 \leq \cdots \leq y_n\}$. Its small noise limit is obtained through a simple convexity argument, and identified as the sticky particle dynamics in Subsection~\ref{ss:sticky}. The description of the small noise limit of the original particle system is then derived in Subsection~\ref{ss:rbX}.

\subsection{The reordered particle system}\label{ss:rps} For all $t \geq 0$, let 
\begin{equation*}
  (Y^{\epsilon}_1(t), \ldots, Y^{\epsilon}_n(t)) \in D_n
\end{equation*}
refer to the increasing reordering of 
\begin{equation*}
  (X^{\epsilon}_1(t), \ldots, X^{\epsilon}_n(t)) \in \R^n,
\end{equation*}
\ie $Y^{\epsilon}_i(t) = X^{\epsilon}_{\sigma(i)}(t)$ with $\sigma = \Sigma X^{\epsilon}(t)$. The increasing reordering of the initial positions $x^0$ is denoted by $y^0$. The process $Y^{\epsilon} = (Y^{\epsilon}_1(t), \ldots, Y^{\epsilon}_n(t))_{t \geq 0}$ shall be referred to as the {\em reordered particle system}. It is continuous and takes its values in the polyhedron $D_n$. The following lemma is an easy adaptation of~\cite[Lemma~2.1, p.~91]{jourdain:porous}.

\begin{lem}\label{lem:spr}
  For all $\epsilon > 0$, there exists a standard Brownian motion 
  \begin{equation*}
    \beta^{\epsilon} = (\beta^{\epsilon}_1(t), \ldots, \beta^{\epsilon}_n(t))_{t \geq 0}
  \end{equation*}
  in $\R^n$, defined on $(\Omega,\mathcal{F},\Pr_{x^0})$, such that
  \begin{equation}\label{eq:rsde}
    \forall t \geq 0, \qquad Y^{\epsilon}(t) = y^0 + b t + \sqrt{2\epsilon}\beta^{\epsilon}(t) + K^{\epsilon}(t),
  \end{equation}
  where the continuous process $K^{\epsilon} = (K^{\epsilon}_1(t), \ldots, K^{\epsilon}_n(t))_{t \geq 0}$ in $\R^n$ is {\em associated} with $Y^{\epsilon}$ in $D_n$ in the sense of Tanaka~\cite[p.~165]{tanaka}. In other words, $Y^{\epsilon}$ is a Brownian motion with constant drift vector $b$ and constant diffusion matrix $2\epsilon I_n$, normally reflected at the boundary of the polyhedron $D_n$; where $I_n$ refers to the identity matrix.
\end{lem}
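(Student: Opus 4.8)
The plan is to obtain the decomposition~\eqref{eq:rsde} coordinate by coordinate, by writing each reordered coordinate $Y^{\epsilon}_i$ as an order statistic of the semimartingales $X^{\epsilon}_1, \ldots, X^{\epsilon}_n$ and applying the It\^o--Tanaka formula to it. First I would fix notation: for $s \geq 0$ set $\sigma(s) = \Sigma X^{\epsilon}(s)$, so that $Y^{\epsilon}_i(s) = X^{\epsilon}_{\sigma(s)(i)}(s)$, and for $1 \leq i \leq n-1$ let $L^{\epsilon}_{i,i+1}$ denote the local time at $0$ of the nonnegative continuous semimartingale $Y^{\epsilon}_{i+1} - Y^{\epsilon}_i$, with the convention $L^{\epsilon}_{0,1} = L^{\epsilon}_{n,n+1} = 0$. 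The claim, adapted from~\cite[Lemma~2.1]{jourdain:porous}, is that
\begin{equation*}
  Y^{\epsilon}_i(t) = y^0_i + \int_{s=0}^t \sum_{j=1}^n \ind{\sigma(s)(i) = j}\,\dd X^{\epsilon}_j(s) + \frac12\left(L^{\epsilon}_{i-1,i}(t) - L^{\epsilon}_{i,i+1}(t)\right),
\end{equation*}
the finite-variation correction coming solely from the collisions of $Y^{\epsilon}_i$ with its two immediate neighbours in the ordering.

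Second, I would read off the drift. On the event $\{\sigma(s)(i) = j\}$ the particle $j$ has rank $i$, so by the rank-based assumption its instantaneous drift is $b_i$; since $\sum_{j=1}^n \ind{\sigma(s)(i)=j} = 1$ for almost every $s$ --- the exceptional set being contained in $\{X^{\epsilon}(s) \in O_n\}$, which has zero Lebesgue measure by Proposition~\ref{prop:X} --- substituting $\dd X^{\epsilon}_j(s) = b_i\,\dd s + \sqrt{2\epsilon}\,\dd W_j(s)$ turns the stochastic integral above into $b_i t + \sqrt{2\epsilon}\,\beta^{\epsilon}_i(t)$, where
\begin{equation*}
  \beta^{\epsilon}_i(t) := \int_{s=0}^t \sum_{j=1}^n \ind{\sigma(s)(i)=j}\,\dd W_j(s).
\end{equation*}
Setting $K^{\epsilon}_i := \tfrac12(L^{\epsilon}_{i-1,i} - L^{\epsilon}_{i,i+1})$ then yields~\eqref{eq:rsde}.

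Third, I would check that $\beta^{\epsilon} = (\beta^{\epsilon}_1, \ldots, \beta^{\epsilon}_n)$ is a standard Brownian motion by L\'evy's characterisation. Each $\beta^{\epsilon}_i$ is a continuous local martingale null at $0$, and the bracket is $\langle \beta^{\epsilon}_i, \beta^{\epsilon}_k\rangle(t) = \int_{s=0}^t \sum_{j=1}^n \ind{\sigma(s)(i)=j}\ind{\sigma(s)(k)=j}\,\dd s$: for $i \neq k$ the product of indicators vanishes, since a single particle cannot occupy two ranks at once, whereas for $i = k$ the integrand equals $1$ for almost every $s$ by the same zero-measure argument, giving $\langle \beta^{\epsilon}_i\rangle(t) = t$. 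Finally I would identify $K^{\epsilon}$ as the Tanaka reflection: each $L^{\epsilon}_{i,i+1}$ is nondecreasing and grows only on the face $\{y \in D_n : y_i = y_{i+1}\}$ of $\partial D_n$, whose inward unit normal is $\tfrac{1}{\sqrt2}(e_{i+1}-e_i)$; hence $\dd K^{\epsilon} = \tfrac12\sum_{i=1}^{n-1}(e_{i+1}-e_i)\,\dd L^{\epsilon}_{i,i+1}$ is a nonnegative combination of inward normals supported on $\partial D_n$, which is precisely the statement that $K^{\epsilon}$ is associated with $Y^{\epsilon}$ in $D_n$ in the sense of Tanaka.

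The main obstacle is the first step: justifying the order-statistic It\^o--Tanaka decomposition rigorously when three or more particles collide simultaneously, i.e. at points of $O_n$ with more than two equal coordinates, where one must confirm that no local-time contribution other than those of the adjacent gaps $Y^{\epsilon}_i - Y^{\epsilon}_{i-1}$ and $Y^{\epsilon}_{i+1} - Y^{\epsilon}_i$ survives. This is handled exactly as in~\cite[Lemma~2.1]{jourdain:porous}: the occupation-time formula underlying Proposition~\ref{prop:X} makes the time spent in $O_n$ negligible, and a careful application of Tanaka's formula to the consecutive differences shows that the reflection is fully encoded by the gap local times. The remaining verifications are routine.
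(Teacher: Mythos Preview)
Your proposal is correct and follows precisely the route the paper indicates: the paper does not give a proof of this lemma but merely states that it is ``an easy adaptation of~\cite[Lemma~2.1, p.~91]{jourdain:porous}'', and your sketch is exactly that adaptation --- the order-statistic It\^o--Tanaka decomposition, the identification of the drift via the rank-based structure, L\'evy's characterisation for $\beta^{\epsilon}$, and the reading of the local-time combination as the Tanaka reflection term. There is nothing to add.
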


By Tanaka~\cite[Theorem~2.1, p.~170]{tanaka}, there exists a unique solution 
\begin{equation*}
  \xi = (\xi_1(t), \ldots, \xi_n(t))_{t \geq 0}
\end{equation*}
to the zero noise version of the reflected equation~\eqref{eq:rsde} given by
\begin{equation}\label{eq:xi}
  \forall t \geq 0, \qquad \xi(t) = y^0 + bt + \kappa(t),
\end{equation}
where $\kappa$ is associated with $\xi$ in $D_n$. An explicit description of $\xi$ as the sticky particle dynamics started at $y$ with initial velocity vector $b$ is provided in Subsection~\ref{ss:sticky} below. 

\begin{prop}\label{prop:contredsr}
  For all $T>0$,
  \begin{equation*}
    \Exp_{x^0}\left(\sup_{t \in [0,T]} \sum_{i=1}^n \left| Y^{\epsilon}_i(t) - \xi_i(t)\right|^2\right) \leq (4\sqrt{2n} + 2n)\epsilon T.
  \end{equation*}
\end{prop}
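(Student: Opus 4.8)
The plan is to compare $Y^\epsilon$ and $\xi$ directly by looking at the squared $\ell^2$-distance between them and exploiting the fact that both solve reflected equations in the same convex polyhedron $D_n$. Writing $E^\epsilon(t) := Y^\epsilon(t) - \xi(t)$, I would subtract~\eqref{eq:xi} from~\eqref{eq:rsde} to obtain
\begin{equation*}
  E^\epsilon(t) = \sqrt{2\epsilon}\,\beta^\epsilon(t) + \bigl(K^\epsilon(t) - \kappa(t)\bigr),
\end{equation*}
since the initial conditions and the drift term $bt$ cancel. Then by the It\^o formula applied to $|E^\epsilon(t)|^2 = \sum_{i=1}^n |E^\epsilon_i(t)|^2$,
\begin{equation*}
  |E^\epsilon(t)|^2 = 2\sqrt{2\epsilon}\int_{s=0}^t E^\epsilon(s)\cdot \dd\beta^\epsilon(s) + 2\int_{s=0}^t E^\epsilon(s)\cdot \dd\bigl(K^\epsilon(s)-\kappa(s)\bigr) + 2\epsilon n\, t,
\end{equation*}
the last term being the bracket $\dd\langle\sqrt{2\epsilon}\beta^\epsilon\rangle = 2\epsilon n\,\dd t$.

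The crucial step is to show that the finite-variation term involving $K^\epsilon$ and $\kappa$ is nonpositive. This is exactly where the convexity of $D_n$ enters. Since $K^\epsilon$ is associated with $Y^\epsilon$ in $D_n$ in the Tanaka sense, $\dd K^\epsilon(s)$ is an inward normal vector at $Y^\epsilon(s)\in\partial D_n$, so that $(\zeta - Y^\epsilon(s))\cdot \dd K^\epsilon(s)\ge 0$ for every $\zeta\in D_n$; in particular for $\zeta = \xi(s)\in D_n$ this gives $(\xi(s)-Y^\epsilon(s))\cdot\dd K^\epsilon(s)\ge 0$, i.e. $E^\epsilon(s)\cdot\dd K^\epsilon(s)\le 0$. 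Symmetrically, $\dd\kappa(s)$ is an inward normal at $\xi(s)$, so $(Y^\epsilon(s)-\xi(s))\cdot\dd\kappa(s)\ge 0$, i.e. $-E^\epsilon(s)\cdot\dd\kappa(s)\le 0$. Adding the two inequalities yields $E^\epsilon(s)\cdot\dd(K^\epsilon(s)-\kappa(s))\le 0$, so that pathwise
\begin{equation*}
  |E^\epsilon(t)|^2 \le 2\sqrt{2\epsilon}\int_{s=0}^t E^\epsilon(s)\cdot \dd\beta^\epsilon(s) + 2\epsilon n\, t.
\end{equation*}

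From here the estimate is obtained by the same martingale bookkeeping as in the proof of case~\eqref{case:concon:z} of Proposition~\ref{prop:Z}. Setting $N^\epsilon(t) := \int_{s=0}^t E^\epsilon(s)\cdot\dd\beta^\epsilon(s)$, a localization argument together with Fatou's lemma first gives $\Exp_{x^0}(|E^\epsilon(t)|^2)\le 2\epsilon n\,t$ and confirms that $N^\epsilon$ is a genuine martingale. Then, taking the supremum over $[0,T]$, applying Cauchy--Schwarz, the Doob $L^2$ inequality to $N^\epsilon$, and the It\^o isometry $\Exp_{x^0}(N^\epsilon(T)^2)=\Exp_{x^0}(\int_0^T |E^\epsilon(s)|^2\dd s)\le \int_0^T 2\epsilon n\, s\,\dd s = \epsilon n T^2$, one arrives at
\begin{equation*}
  \Exp_{x^0}\left(\sup_{t\in[0,T]}|E^\epsilon(t)|^2\right) \le 4\sqrt{2\epsilon}\sqrt{\epsilon n T^2} + 2\epsilon n T = (4\sqrt{2n}+2n)\epsilon T,
\end{equation*}
which is the claimed bound. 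I expect the only genuinely delicate point to be the justification of the normality/monotonicity inequality for the Tanaka reflection terms — that is, making rigorous that $E^\epsilon(s)\cdot\dd(K^\epsilon(s)-\kappa(s))\le 0$ as a signed-measure statement — since this is what replaces the sign condition $z\ell(z)\le 0$ used in the scalar converging/converging computation; everything downstream is a routine repetition of the martingale estimates already carried out in Proposition~\ref{prop:Z}.
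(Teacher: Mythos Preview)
Your proposal is correct and follows essentially the same route as the paper: apply It\^o's formula to $\sum_i |Y^\epsilon_i(t)-\xi_i(t)|^2$, use the convexity of $D_n$ and the Tanaka normal-reflection property to discard the bounded-variation terms $\int (Y^\epsilon-\xi)\cdot\dd K^\epsilon$ and $\int (\xi-Y^\epsilon)\cdot\dd\kappa$, and then repeat the localization/Doob/It\^o-isometry estimates from case~\eqref{case:concon:z} of Proposition~\ref{prop:Z}. The paper simply writes the two reflection integrals separately and invokes the localization procedure without redoing the constants, whereas you spell out the final chain of inequalities; the argument and the resulting bound are identical.
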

\begin{proof}
  By the Itô formula,
  \begin{equation*}
    \begin{aligned}
      \forall t \geq 0, \qquad \sum_{i=1}^n |Y^{\epsilon}_i(t)-\xi_i(t)|^2 & = 2\sum_{i=1}^n \int_{s=0}^t (Y^{\epsilon}_i(s)-\xi_i(s))\dd K^{\epsilon}_i(s)\\
      & \quad + 2\sum_{i=1}^n \int_{s=0}^t (\xi_i(s)-Y^{\epsilon}_i(s))\dd \kappa_i(s)\\
      & \quad + 2\sqrt{2\epsilon} M^{\epsilon}(t) + 2n\epsilon t,
    \end{aligned}
  \end{equation*}
  where
  \begin{equation*}
    \forall t \geq 0, \qquad M^{\epsilon}(t) := \sum_{i=1}^n \int_{s=0}^t (Y^{\epsilon}_i(s)-\xi_i(s))\dd \beta^{\epsilon}_i(s).
  \end{equation*}
  
  Let $|K^{\epsilon}|(t)$ refer to the total variation of $K^{\epsilon}$ on $[0,t]$. Then, by the definition of $K^{\epsilon}$ (see~\cite[p.~165]{tanaka}), $\dd |K^{\epsilon}|(t)$-almost everywhere, $Y^{\epsilon}(t) \in \partial D_n$ and the unit vector $k^{\epsilon}(t) = (k^{\epsilon}_1(t), \ldots, k^{\epsilon}_n(t))$ defined by $\dd K^{\epsilon}_i(t) = k^{\epsilon}_i(t)\dd |K^{\epsilon}|(t)$ belongs to the cone of inward normal vectors to $D_n$ at $Y^{\epsilon}(t)$. Since $\xi(t) \in D_n$ and the set $D_n$ is convex, this yields
  \begin{equation*}
    \sum_{i=1}^n \int_{s=0}^t (Y^{\epsilon}_i(s)-\xi_i(s))\dd K^{\epsilon}_i(s) = \int_{s=0}^t \sum_{i=1}^n (Y^{\epsilon}_i(s)-\xi_i(s))k^{\epsilon}_i(s) \dd |K^{\epsilon}|(s) \leq 0,
  \end{equation*}
  and by the same arguments,
  \begin{equation*}
    \sum_{i=1}^n \int_{s=0}^t (\xi_i(s)-Y^{\epsilon}_i(s))\dd \kappa_i(s) \leq 0,
  \end{equation*}
  so that $\sum_{i=1}^n |Y^{\epsilon}_i(t)-\xi_i(t)|^2 \leq 2\sqrt{2\epsilon} M^{\epsilon}(t) + 2n\epsilon t$. The result now follows from the same localization procedure as in the proof of Proposition~\ref{prop:Z}, case~\eqref{case:concon:z}.
\end{proof}

\subsection{The sticky particle dynamics}\label{ss:sticky} Following Brenier and Grenier~\cite{bregre}, the {\em sticky particle dynamics} started at $y^0 \in D_n$ with initial velocity vector $b \in \R^n$ is defined as the continuous process 
\begin{equation*}
  \xi = (\xi_1(t), \ldots, \xi_n(t))_{t \geq 0}
\end{equation*}
in $D_n$ satisfying the following conditions.
\begin{itemize}
  \item For all $i \in \{1, \ldots, n\}$, the $i$-th particle has initial position $\xi_i(0)=y^0_i$, initial velocity $b_i$ and unit mass.
  \item A particle travels with constant velocity until it collides with another particle. Then both particles stick together and form a cluster traveling at constant velocity given by the average velocity of the two colliding particles.
  \item More generally, when two clusters collide, they form a single cluster, the velocity of which is determined by the conservation of global momentum.
\end{itemize}

Certainly, particles with the same initial position can collide instantaneously and form one or several clusters, each cluster being composed by particles with consecutives indices. The determination of these instantaneous clusters is made explicit in ~\cite[Remarque~1, p.~235]{jourdain:sticky}.

Since the particles stick together after each collision, there is only a finite number $M \geq 0$ of collisions. Let us denote by $0 = t^0 < t^1 < \cdots < t^M < t^{M+1} = +\infty$ the instants of collisions. For all $m \in \{0, \ldots, M\}$, we define the equivalence relation $\sim_m$ by $i \sim_m j$ if the $i$-th particle and the $j$-th particle travel in the same cluster on $[t^m, t^{m+1})$. Note that if $i \sim_m j$, then $i \sim_{m'} j$ for all $m' \geq m$. For all $m \in \{0, \ldots, M\}$, we denote by $v^m_i$ the velocity of the $i$-th particle after the $m$-th collision. As a consequence, for all $t \in [t^m,t^{m+1})$,
\begin{equation*}
  \forall i \in \{1, \ldots, n\}, \qquad \xi_i(t) = \xi_i(t^m) + v^m_i(t-t^m),
\end{equation*}
and 
\begin{equation*}
  v^m_i = \frac{1}{i_2-i_1+1} \sum_{j=i_1}^{i_2} b_j,
\end{equation*}
where $\{i_1, \ldots, i_2\}$ is the set of the consecutive indices $j$ such that $j \sim_m i$. The clusters are characterized by the following {\em stability condition} due to Brenier and Grenier~\cite[Lemma~2.2, p.~2322]{bregre}.

\begin{lem}\label{lem:stab}
  For all $t \in [t^m,t^{m+1})$, for all $i \in \{1, \ldots, n\}$, let $i_1, \ldots, i_2$ refer to the set of consecutive indices $j$ such that $j \sim_m i$. Then, either $i_1=i_2$ or
  \begin{equation*}
    \forall i' \in \{i_1, \ldots, i_2-1\}, \qquad \frac{1}{i'-i_1+1} \sum_{j=i_1}^{i'} b_j \geq \frac{1}{i_2-i'} \sum_{j=i'+1}^{i_2} b_j. 
  \end{equation*}
\end{lem}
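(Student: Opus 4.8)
The plan is to prove, by induction on the number of collisions that have already occurred, that the clustering produced by the dynamics is \emph{stable} in the sense of the statement at every time. Since the partition into clusters is constant on each interval $[t^m,t^{m+1})$, it suffices to treat the (interval-constant) clustering, and the convenient reformulation is as follows. Write $\bar{b}(B):=\frac{1}{|B|}\sum_{j\in B}b_j$ for a block $B$ of consecutive indices. For a cluster $C=\{i_1,\dots,i_2\}$ with velocity $v:=\bar{b}(C)$, the elementary averaging identity $\bar{b}(C)=\frac{|L|\,\bar{b}(L)+|R|\,\bar{b}(R)}{|L|+|R|}$ for a split $C=L\cup R$ shows that the stated inequality (left-hand average $\geq$ right-hand average for every splitting point $i'$) is equivalent to $\bar{b}(\{i_1,\dots,i'\})\geq v$ for every prefix, i.e. to $\phi(k)\geq 0$ for all $k$, where $\phi(k):=\sum_{j=i_1}^{i_1+k-1}(b_j-v)$ and $\phi(0)=\phi(i_2-i_1+1)=0$.

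For the base case $m=0$, if the initial positions are pairwise distinct then all initial clusters are singletons and the condition is vacuous ($i_1=i_2$); if some initial positions coincide, the instantaneous clusters are precisely those given by the explicit construction of~\cite{jourdain:sticky}, which is built so as to satisfy the stability condition, so the base case holds. For the inductive step, a cluster $C$ on $[t^{m+1},t^{m+2})$ is either a cluster already present on $[t^m,t^{m+1})$, hence stable by the induction hypothesis, or the union $C=B_1\cup\cdots\cup B_r$ of consecutive clusters (ordered from left to right) that come together at time $t^{m+1}$, each $B_p$ being stable by the induction hypothesis. I would first record that their velocities $v_p:=\bar{b}(B_p)$ are (strictly) decreasing: on $(t^m,t^{m+1})$ two consecutive merging clusters occupy distinct positions, the left one to the left of the right one, and since the gap between them vanishes exactly at $t^{m+1}$, the left velocity must exceed the right one.

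The heart of the argument is then an elementary estimate. Setting $d_p:=|B_p|(v_p-v)$ and $D_p:=\sum_{q=1}^p d_q$, momentum conservation gives $\sum_p d_p=|C|\bar{b}(C)-v|C|=0$, while $(v_p-v)_p$ is decreasing, so $(d_p)_p$ changes sign exactly once, from nonnegative to nonpositive; consequently $(D_p)_p$ rises from $D_0=0$ to a peak and falls back to $D_r=0$, whence $D_p\geq 0$ for all $p$. Now fix $k$ landing inside some block $B_p$, say including the first $s$ indices of $B_p$, and let $T_s:=\sum(b_j-v_p)$ over those $s$ indices, so that $T_s\geq 0$ by stability of $B_p$. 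One has the decomposition $\phi(k)=D_{p-1}+T_s+s(v_p-v)$. If $v_p\geq v$, the three terms are all nonnegative. If $v_p<v$, then using $s\leq|B_p|$ and $T_s\geq 0$ one gets $\phi(k)\geq D_{p-1}+|B_p|(v_p-v)=D_p\geq 0$. In both cases $\phi(k)\geq 0$, which shows that $C$ is stable and closes the induction.

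I expect the main obstacle to lie not in this final computation, which is routine once the quantities $\phi$, $D_p$ and $T_s$ are in place, but in the two structural inputs: correctly establishing from the collision geometry that the merging blocks have monotone velocities $v_1\geq\cdots\geq v_r$, and handling the instantaneous clusters at $t=0$ (the case $x^0\in O_n$ with coincident coordinates), for which one must rely on the explicit construction of~\cite{jourdain:sticky} to know that the initial clustering is itself stable. With these in hand, the induction on collisions delivers the stability condition at every time.
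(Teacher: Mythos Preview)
The paper does not give its own proof of this lemma: it is simply attributed to Brenier and Grenier~\cite[Lemma~2.2, p.~2322]{bregre}. So there is no ``paper's proof'' to compare against, only the cited reference.

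Your inductive argument is correct and essentially self-contained. The reformulation via $\phi(k)=\sum_{j=i_1}^{i_1+k-1}(b_j-v)$ is the natural one, and the key decomposition $\phi(k)=D_{p-1}+T_s+s(v_p-v)$ together with the sign analysis of the partial sums $D_p$ is exactly the right computation. The two structural inputs you flag are indeed the only places requiring care: the strict monotonicity $v_1>\cdots>v_r$ of the merging blocks follows, as you say, from the fact that consecutive gaps are positive affine functions vanishing at $t^{m+1}$; and for the base case with coincident initial positions you correctly defer to the explicit construction in~\cite{jourdain:sticky}, which the paper itself invokes for the same purpose. One small remark: your claim that ``$(d_p)_p$ changes sign exactly once'' relies only on the sign of $v_p-v$ being eventually nonpositive after being initially nonnegative (since $v_p$ is strictly decreasing and has weighted average $v$), not on any monotonicity of the $d_p$ themselves; you use this correctly but the phrasing could be tightened.
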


The fact that $\xi$ describes the small noise limit of the reordered particle system $Y^{\epsilon}$ introduced in Subsection~\ref{ss:rps} is a consequence of Proposition~\ref{prop:contredsr} combined with the following lemma.

\begin{lem}\label{lem:xi}
  The process $\xi$ satisfies the reflected equation~\eqref{eq:xi} in $D_n$.
\end{lem}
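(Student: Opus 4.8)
The plan is to define $\kappa := \xi - y^0 - bt$ and to check that this continuous process is associated with $\xi$ in $D_n$ in the sense of Tanaka; this is precisely the content of~\eqref{eq:xi}. Since $\xi$ is continuous, piecewise linear and $D_n$-valued by construction, $\kappa$ is continuous, of finite variation on every compact interval, and satisfies $\kappa(0) = \xi(0)-y^0 = 0$. By the definition of the Tanaka reflection term (see~\cite[p.~165]{tanaka}), it therefore remains to prove that the total variation measure $\dd|\kappa|$ charges only the times $s$ such that $\xi(s) \in \partial D_n$, and that $\dd\kappa(s)$ belongs to the cone of inward normal vectors to $D_n$ at $\xi(s)$ for $\dd|\kappa|$-almost every $s$. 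Writing $(e_1,\dots,e_n)$ for the canonical basis of $\R^n$, this cone is generated by the vectors $e_{j+1}-e_j$ associated with the active faces $\{\xi_j = \xi_{j+1}\}$ of $\partial D_n$.

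On each interval $[t^m,t^{m+1})$, the process $\kappa$ is linear with constant slope $c := v^m - b$. For a cluster $C = \{i_1,\dots,i_2\}$ travelling together on this interval, one has $v^m_i = \bar{b}_C := (i_2-i_1+1)^{-1}\sum_{j=i_1}^{i_2} b_j$ for every $i \in C$, so that the corrections sum to zero block by block: $\sum_{i \in C}c_i = 0$. This zero-sum property is exactly what is needed to represent $c$ as a combination of consecutive differences supported on the active pairs, namely $c = \sum_{j \sim_m j+1}\lambda_j (e_{j+1}-e_j)$, where a telescoping summation gives, for $i_1 \le i' \le i_2 - 1$,
\begin{equation*}
  \lambda_{i'} = \sum_{i=i_1}^{i'}(b_i - \bar{b}_C) = \sum_{i=i_1}^{i'} b_i - (i'-i_1+1)\,\bar{b}_C.
\end{equation*}

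The crux is then the nonnegativity of the coefficients $\lambda_{i'}$. Dividing the displayed identity by $i'-i_1+1>0$ shows that $\lambda_{i'}\ge 0$ is equivalent to the average of $b$ over the left block $\{i_1,\dots,i'\}$ being at least the average $\bar{b}_C$ over the whole cluster; and since $\bar{b}_C$ is itself the weighted average of the left and right block averages, this is in turn equivalent to the left average dominating the right average, which is precisely the stability condition of Lemma~\ref{lem:stab}. Granting $\lambda_{i'}\ge 0$, the two required properties follow at once: whenever $c \neq 0$ some pair $j \sim_m j+1$ is active, so $\xi_j(s) = \xi_{j+1}(s)$ and $\xi(s) \in \partial D_n$ throughout the interval, which takes care of the support of $\dd|\kappa|$; and $\dd\kappa(s) = c\,\dd s$ is then a nonnegative linear combination of the inward normals $e_{j+1}-e_j$ at the very faces on which $\xi(s)$ lies, which is the cone condition. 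The instantaneous clusters formed at $t=0$ are handled identically, since they consist of particles with coinciding initial positions, so that the corresponding faces of $\partial D_n$ are active at $s=0$. The only genuinely substantive step is the equivalence between Lemma~\ref{lem:stab} and the sign of the $\lambda_{i'}$; the rest is the bookkeeping of Tanaka's definition.
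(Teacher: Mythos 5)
Your proof is correct and follows essentially the same route as the paper's: both construct the reflection term explicitly on each inter-collision interval, represent the slope $v^m-b$ as a telescoping combination of the inward normals $e_{j+1}-e_j$ supported on the active faces (your $\lambda_{i'}$ are, up to normalization by the total-variation density, exactly the paper's $\gamma_{i'+1}$), and reduce the nonnegativity of the coefficients to the stability condition of Lemma~\ref{lem:stab}.
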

\begin{proof}
  The proof is constructive, namely we build a process $\kappa$ associated with $\xi$ in $D_n$ such that, for all $t \geq 0$, $\xi(t)=y^0+bt+\kappa(t)$. Following~\cite[Remark~2.3, p.~91]{jourdain:porous}, $\kappa : [0,+\infty) \to \R^n$ is associated with $\xi$ in $D_n$ if and only if:
  \begin{itemize}
    \item[(i)] $\kappa$ is continuous, with bounded variation $|\kappa|=|\kappa_1| + \cdots + |\kappa_n|$ and $\kappa(0)=0$;
    \item[(ii)] there exist functions $\gamma_1, \ldots, \gamma_{n+1} : [0,+\infty) \to \R$ such that, for all $i \in \{1, \ldots, n\}$, $\dd \kappa_i(t) = (\gamma_i(t)-\gamma_{i+1}(t))\dd |\kappa|(t)$; and, $\dd|\kappa|(t)$-almost everywhere,
    \begin{equation*}
      \begin{aligned}
        & \gamma_1(t)=\gamma_{n+1}(t)=0,\\
        & \forall i \in \{2, \ldots, n\}, \quad \gamma_i(t) \geq 0, \quad \gamma_i(t)(\xi_i(t)-\xi_{i-1}(t))=0.
      \end{aligned}
    \end{equation*}
  \end{itemize}
  
  Let $\kappa(0)=0$ and let us define $\kappa_i(t) = \kappa_i(t^m) + (t-t^m)(v^m_i-b_i)$ for all $t \in [t^m, t^{m+1})$. Then one easily checks that~\eqref{eq:xi} holds. Besides, $\kappa$ is absolutely continuous with respect to the Lebesgue measure on $[0,+\infty)$, and its total variation $|\kappa|$ admits the Radon-Nikodym derivative $\ell_m := \sum_{i=1}^n |v^m_i-b_i|$ on $[t^m, t^{m+1})$. As a consequence, $\kappa$ satisfies (i). 
  
  It remains to prove that $\kappa$ satisfies (ii). For all $m \in \{0, \ldots, M\}$, for all $t \in [t^m, t^{m+1})$, we define $\gamma_1(t)=\gamma_{n+1}(t)=0$ and:
  \begin{itemize}
    \item if $\ell_m=0$, for all $i \in \{2, \ldots, n\}$, $\gamma_i(t)=0$;
    \item if $\ell_m>0$, for all $i \in \{2, \ldots, n\}$,
    \begin{equation*}
      \gamma_i(t) = \frac{1}{\ell_m} \sum_{j=i_1}^{i-1}(b_j-v_i^m),
    \end{equation*}
    where $i_1, \ldots, i_2$ is the set of the consecutive indices $j$ such that $j \sim_m i$, and we take the convention that a sum over an empty set of indices is null.
  \end{itemize}
  Note that, in the latter case, $\gamma_{i_1}(t)=\gamma_{i_2+1}(t)=0$. This immediately yields $\dd \kappa_i(t) = (\gamma_i(t)-\gamma_{i+1}(t))\dd|\kappa|(t)$ as well as $\gamma_i(t)(\xi_i(t)-\xi_{i-1}(t))=0$. It remains to prove that $\gamma_i(t) \geq 0$. If $\gamma_i(t)=0$ this is trivial. Else, by the construction above, the $i$-th particle belongs to the cluster composed by the $i_1\text{-th}, \ldots, i_2\text{-th}$ particles, and $i_1 < i \leq i_2$. By Lemma~\ref{lem:stab} applied with $i'=i-1$,
  \begin{equation*}
    \frac{1}{i-i_1}\sum_{j=i_1}^{i-1} b_j \geq \frac{1}{i_2-i+1}\sum_{j=i}^{i_2} b_j.
  \end{equation*} 
  As a consequence,
  \begin{equation*}
    \begin{aligned}
      \gamma_i(t) = \frac{1}{\ell_m} \sum_{j=i_1}^{i-1}(b_j-v_i^m) & = \frac{1}{\ell_m}\left(\sum_{j=i_1}^{i-1} b_j - \frac{i-i_1}{i_2-i_1+1} \sum_{j=i_1}^{i_2} b_j\right)\\
      & = \frac{1}{\ell_m}\left(\frac{i_2-i+1}{i_2-i_1+1}\sum_{j=i_1}^{i-1} b_j - \frac{i-i_1}{i_2-i_1+1} \sum_{j=i}^{i_2} b_j\right) \geq 0,
    \end{aligned}
  \end{equation*}
  and the proof is completed.
\end{proof}

In the proof of Corollary~\ref{cor:rbX}, we shall use the following properties of the sticky particle dynamics.
\begin{lem}\label{lem:contract}
  The sticky particle dynamics has the following properties.
  \begin{itemize}
    \item {\em Flow:} Let $y^0 \in D_n$ and let us denote by $(\xi(t))_{t \geq 0}$ the sticky particle process started at $y^0$, with initial velocity vector $b$. For a given $\delta \geq 0$, let us denote by $(\xi'(s))_{s \geq 0}$ the sticky particle process started at $\xi(\delta)$, with initial velocity vector $b$. Then, for all $s \geq 0$, $\xi(\delta+s) = \xi'(s)$.
    
    \item {\em Contractivity:} Let $y^0, y'^0 \in D_n$ and let us denote by $(\xi(t))_{t \geq 0}$ and $(\xi'(t))_{t \geq 0}$ the sticky particle processes respectively started at $y^0$ and $y'^0$, with the same initial velocity vector $b$. Then, for all $t \geq 0$,
    \begin{equation*}
      \sum_{i=1}^n |\xi_i(t) - \xi'_i(t)| \leq \sum_{i=1}^n |y^0_i - y'^0_i|.
    \end{equation*}
  \end{itemize}
\end{lem}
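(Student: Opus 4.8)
For this part I would bypass the collision bookkeeping and argue through the reflected equation. By Lemma~\ref{lem:xi} applied to each starting configuration, $\xi$ solves $\xi(t)=y^0+bt+\kappa(t)$ with $\kappa$ associated with $\xi$ in $D_n$, and $\xi'$ is the unique solution (Tanaka) of the analogous equation started from $\xi(\delta)$. Setting $\eta(s):=\xi(\delta+s)$, one has
\begin{equation*}
  \eta(s)=\xi(\delta)+bs+\bigl(\kappa(\delta+s)-\kappa(\delta)\bigr),\qquad s\ge 0.
\end{equation*}
The increment $\tilde\kappa(s):=\kappa(\delta+s)-\kappa(\delta)$ vanishes at $s=0$, is continuous with bounded variation, and since the conditions (i)--(ii) defining ``associated with a path in $D_n$'' are local in time — they only constrain, $\dd|\kappa|$-almost everywhere, the support $\{\eta(s)\in\partial D_n\}$ and membership in the normal cone — they pass to the time-shifted path. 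Hence $\eta$ solves the reflected equation started from $\xi(\delta)$, and Tanaka's uniqueness forces $\eta=\xi'$. Equivalently, and closer to the definition, one checks that the instantaneous clusters formed by $\xi'$ at time $0$ coincide with the clusters carried by $\xi$ at time $\delta$, which is exactly the content of the stability condition of Lemma~\ref{lem:stab} that the clusters of $\xi$ satisfy at every time.

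\textbf{Contractivity.} The plan is to linearize the $\ell^1$ distance through the particle counting function. For a configuration $y\in D_n$ write $N(y,a):=\#\{i:y_i\le a\}$; a layer-cake computation gives, for any $y,y'\in D_n$,
\begin{equation*}
  \sum_{i=1}^n|y_i-y'_i|=\int_{\R}\bigl|N(y,a)-N(y',a)\bigr|\,\dd a,
\end{equation*}
because, the coordinates being sorted, $a$ lies between $y_i$ and $y'_i$ for exactly $|N(y,a)-N(y',a)|$ indices $i$. Writing $M(t,a):=N(\xi(t),a)$ and $M'(t,a):=N(\xi'(t),a)$ and applying this identity to $\xi(t),\xi'(t)$, the claim reduces to showing that $t\mapsto\int_{\R}|M(t,a)-M'(t,a)|\,\dd a$ is nonincreasing, its value at $t=0$ being $\sum_i|y^0_i-y'^0_i|$ by the same identity read at $t=0$.

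I would then invoke the equivalence, due to Brenier and Grenier~\cite{bregre}, between the sticky particle dynamics and the entropy solution of a scalar conservation law: $M(t,a)$ is the entropy solution of $\partial_t M+\partial_a A(M)=0$ whose flux $A$ is a primitive of the rank-indexed velocity field $i\mapsto b_i$, the clustering rule of Lemma~\ref{lem:stab} being precisely the Oleinik admissibility condition; the wanted monotonicity is then the classical $L^1$ contraction of entropy solutions. A self-contained variant avoids conservation-law theory: prove directly that the sticky flow $y^0\mapsto\xi(t)$ is order preserving (a comparison principle) and shifts the coordinate sum by the constant $t\sum_j b_j$ (conservation of total momentum, immediate from the cluster velocities), and conclude by the Crandall--Tartar lemma, which turns order preservation plus this sum relation into $\ell^1$ contraction.

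Either way, the crux — and the step I expect to be the main obstacle — is the comparison principle: $y^0_i\le y'^0_i$ for all $i$ implies $\xi_i(t)\le\xi'_i(t)$ for all $i$ and $t$. The difficulty is that the two systems cluster along distinct partitions, so the cluster velocities must be compared across different groupings. I would argue at the first contact time $\tau=\inf\{t:\exists i,\ \xi_i(t)=\xi'_i(t)\}$ and show, via the min-average representation of cluster velocities underlying Lemma~\ref{lem:stab}, that there the velocity of the touching $\xi$-particle cannot exceed that of the matching $\xi'$-particle, so that the order is preserved. The remaining ingredients (the layer-cake identity, momentum conservation, and Crandall--Tartar) are elementary.
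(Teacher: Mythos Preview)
Your treatment of the flow property via Tanaka's uniqueness for the reflected equation is sound and amounts to a formalization of what the paper dispatches in one sentence as ``a straightforward consequence of the definition of the sticky particle dynamics''.

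For contractivity your route is correct but genuinely different from the paper's. The paper stays at the level of the reflected equation: writing $\xi(t)=y^0+bt+\kappa(t)$ and $\xi'(t)=y'^0+bt+\kappa'(t)$, it computes
\[
  \sum_i|\xi_i(t)-\xi'_i(t)|=\sum_i|y^0_i-y'^0_i|+\sum_i\int_0^t\sgn(\xi_i-\xi'_i)\,\dd\kappa_i+\sum_i\int_0^t\sgn(\xi'_i-\xi_i)\,\dd\kappa'_i,
\]
and shows each integral is nonpositive by an Abel transform against the decomposition $\dd\kappa_i=(\gamma_i-\gamma_{i+1})\,\dd|\kappa|$ built in Lemma~\ref{lem:xi}: whenever $\gamma_i(s)>0$ one has $\xi_i(s)=\xi_{i-1}(s)$, and since $\xi'_{i-1}(s)\le\xi'_i(s)$ the telescoped term $\sgn(\xi_i-\xi'_i)-\sgn(\xi_{i-1}-\xi'_{i-1})$ is $\le 0$. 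This is entirely self-contained, uses only what was already constructed in Lemma~\ref{lem:xi}, and never invokes a comparison principle.

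Your approach via the layer-cake identity plus either the Brenier--Grenier/Kru\v{z}kov $L^1$ contraction or order preservation and Crandall--Tartar is more conceptual and connects the result to a broader theory, but it either outsources the work to entropy-solution machinery or, in the self-contained variant, leaves the comparison principle as the acknowledged crux. The paper's argument sidesteps that obstacle entirely: the sign structure of the inward normal cone, already encoded in the nonnegativity of the $\gamma_i$'s, does all the work.
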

\begin{proof}
  The flow property is a straighforward consequence of the definition of the sticky particle dynamics. Let us address the contractivity property. In this purpose, we write
  \begin{equation*}
    \forall t \geq 0, \qquad \xi(t) = y^0 + bt + \kappa(t), \quad \xi'(t) = y'^0 + bt + \kappa'(t),
  \end{equation*}
  so that, for all $t \geq 0$,
  \begin{equation*}
    \begin{aligned}
      \sum_{i=1}^n |\xi_i(t) - \xi'_i(t)| & = \sum_{i=1}^n |y^0_i - y'^0_i|\\
      & \quad + \sum_{i=1}^n \int_{s=0}^t \sgn(\xi_i(s) - \xi'_i(s))\dd \kappa_i(s) + \sum_{i=1}^n \int_{s=0}^t \sgn(\xi'_i(s) - \xi_i(s))\dd \kappa'_i(s),
    \end{aligned}
  \end{equation*}
  where $\sgn(\cdot)$ is defined by
  \begin{equation*}
    \sgn(x) = \left\{\begin{aligned}
      & 1 & \text{if $x>0$},\\
      & 0 & \text{if $x=0$},\\
      & -1 & \text{if $x<0$}.
    \end{aligned}\right.
  \end{equation*}

  We prove that
  \begin{equation*}
    \sum_{i=1}^n \int_{s=0}^t \sgn(\xi_i(s) - \xi'_i(s))\dd \kappa_i(s) \leq 0,
  \end{equation*}
  and the same arguments also yield 
  \begin{equation*}
    \sum_{i=1}^n \int_{s=0}^t \sgn(\xi'_i(s) - \xi_i(s))\dd \kappa'_i(s) \leq 0,
  \end{equation*}
  which completes the proof.
  
  With the notations of Lemma~\ref{lem:xi},
  \begin{equation*}
    \begin{aligned}
      & \sum_{i=1}^n \int_{s=0}^t \sgn(\xi_i(s) - \xi'_i(s))\dd \kappa_i(s) = \int_{s=0}^t \left(\sum_{i=1}^n \sgn(\xi_i(s) - \xi'_i(s))(\gamma_i(s)-\gamma_{i+1}(s))\right)\dd |\kappa|(s)\\
      & \qquad = \int_{s=0}^t \left(\sum_{i=2}^n \left(\sgn(\xi_i(s) - \xi'_i(s)) - \sgn(\xi_{i-1}(s) - \xi'_{i-1}(s))\right) \gamma_i(s)\right) \dd |\kappa|(s),
    \end{aligned}
  \end{equation*}
  where we have used Abel's transform as well as the fact that, $\dd|\kappa|(s)$-almost everywhere, $\gamma_1(s) = \gamma_{n+1}(s) = 0$. Now, $\dd|\kappa|(s)$-almost everywhere, either $\gamma_i(s) = 0$ or $\gamma_i(s) > 0$, in which case $\xi_i(s) = \xi_{i-1}(s)$ and therefore $\sgn(\xi_i(s) - \xi'_i(s)) - \sgn(\xi_{i-1}(s) - \xi'_{i-1}(s)) \leq 0$ since $\xi'_{i-1}(s) \leq \xi'_i(s)$. As a conclusion, 
  \begin{equation*}
    \int_{s=0}^t \left(\sum_{i=2}^n \left(\sgn(\xi_i(s) - \xi'_i(s)) - \sgn(\xi_{i-1}(s) - \xi'_{i-1}(s))\right) \gamma_i(s)\right) \dd |\kappa|(s) \leq 0,
  \end{equation*}
  and the proof is completed.
\end{proof}

\subsection{Small noise limit of the original particle system}\label{ss:rbX} Proposition~\ref{prop:contredsr} describes the small noise limit of the reordered particle system $Y^{\epsilon}$. We now describe the small noise limit of the original particle system $X^{\epsilon}$. For all $\sigma \in S_n$, we denote by $\xi_{\sigma^{-1}}$ the process $(\xi_{\sigma^{-1}(1)}(t), \ldots, \xi_{\sigma^{-1}(n)}(t))_{t \geq 0}$.

Recall that, for all $x \in \R^n$, $\bar{\Sigma}x$ refers to the set of permutations $\sigma \in S_n$ such that $x_{\sigma(1)} \leq \cdots \leq x_{\sigma(n)}$. When at least two particles have the same initial position, \ie $x^0 \in O_n$, $\bar{\Sigma}x^0$ contains more than one element. However, if each group of particles sharing the same initial position forms a single cluster in the sticky particle dynamics, then, for all $\sigma, \sigma' \in \bar{\Sigma}x^0$, the processes $\xi_{\sigma^{-1}}$ and $\xi_{\sigma'^{-1}}$ are equal.

\begin{cor}\label{cor:rbX}
  The small noise limit of the original particle system is described as follows.
  \begin{enumerate}
    \item If $\bar{\Sigma}x^0$ contains a single element, or if, for all $\sigma, \sigma' \in \bar{\Sigma}x^0$, the processes $\xi_{\sigma^{-1}}$ and $\xi_{\sigma'^{-1}}$ are equal, then $X^{\epsilon}$ converges in $L^2_{\loc}(\Pr_{x^0})$ to $\xi_{\sigma^{-1}}$ for any $\sigma \in \bar{\Sigma}x^0$.
    \item In general, $X^{\epsilon}$ converges in distribution to the process $\xi_{\sigma^{-1}}$, where $\sigma$ is a uniform random variable among $\bar{\Sigma}x^0$.
  \end{enumerate}
\end{cor}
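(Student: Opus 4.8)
The plan is to transfer the small noise convergence of the reordered system $Y^{\epsilon}$, established in Proposition~\ref{prop:contredsr}, back to the original system $X^{\epsilon}$. The key observation is that $X^{\epsilon}$ and $Y^{\epsilon}$ carry the same information up to a random permutation: at each time $t$, the vector $X^{\epsilon}(t)$ is obtained from $Y^{\epsilon}(t)$ by applying the inverse of the ordering permutation $\Sigma X^{\epsilon}(t)$. The difficulty is that this permutation is random and time-dependent, so the reconstruction of $X^{\epsilon}$ from $Y^{\epsilon}$ is not a fixed continuous map, and we cannot directly apply the continuous mapping theorem.

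To handle the first case, suppose that all processes $\xi_{\sigma^{-1}}$ coincide for $\sigma \in \bar{\Sigma}x^0$, and fix any such $\sigma$. The strategy is to control the discrepancy between $X^{\epsilon}_i(t)$ and $\xi_{\sigma^{-1}(i)}(t)$ in $L^2_{\loc}$. First I would note that, since the $L^1$ distance on $\R^n$ used in the definition of convergence is invariant under permutation of coordinates, one has
\begin{equation*}
  \sum_{i=1}^n |X^{\epsilon}_i(t) - \xi_{\sigma^{-1}(i)}(t)|
\end{equation*}
compared against $\sum_{i=1}^n |Y^{\epsilon}_i(t) - \xi_i(t)|$; the point is that reordering $X^{\epsilon}(t)$ into $Y^{\epsilon}(t)$ and correspondingly reordering $\xi_{\sigma^{-1}}(t)$ back into $\xi(t)$ should only decrease the total $L^1$ deviation, because the increasing rearrangement is an $L^1$-contraction (both $X^{\epsilon}(t)$ and the correctly reindexed limit get sorted consistently). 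This reduces the convergence of $X^{\epsilon}$ to that of $Y^{\epsilon}$, which is exactly Proposition~\ref{prop:contredsr}, giving the $L^2_{\loc}$ rate $(4\sqrt{2n}+2n)\epsilon T$ up to constants. The hypothesis that all $\xi_{\sigma^{-1}}$ agree is what makes the limit well-defined independently of which $\sigma \in \bar{\Sigma}x^0$ is realized by $X^{\epsilon}(t)$ at small times.

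For the general second case, the limit genuinely depends on the initial ordering chosen among the tied coordinates, so strong convergence fails and only convergence in distribution can hold. Here the plan is to condition on the initial configuration: by symmetry of the Brownian driving noise and the structure of the dynamics near $t=0$, the process $X^{\epsilon}$ will, with probabilities tending to $1/|\bar{\Sigma}x^0|$, leave the tied initial position $x^0$ in each of the orderings $\sigma \in \bar{\Sigma}x^0$, and conditionally on the ordering being $\sigma$ it converges to $\xi_{\sigma^{-1}}$ by the argument of the first case applied to the corresponding group structure. I expect the main obstacle to be making rigorous the claim that the selection of the initial ordering is asymptotically uniform: this requires showing that the occupation-time fractions of $X^{\epsilon}$ in the various configurations, just after leaving $O_n$, converge in distribution to the uniform choice, in the spirit of the two-particle Peano analysis of Lemma~\ref{lem:zeta} case~\eqref{case:didi:zeta}. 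Once the uniform selection is identified, the convergence in distribution follows by combining it with the conditional $L^2_{\loc}$ convergence and a routine characteristic-functional or bounded-continuous-test-function argument.
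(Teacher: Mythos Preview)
Your proposal for Part~1 contains a genuine gap: the $L^1$-contraction property of the increasing rearrangement gives the inequality in the \emph{wrong direction}. Sorting is indeed a contraction, so
\[
  \sum_{i=1}^n |Y^{\epsilon}_i(t) - \xi_i(t)| \;\leq\; \sum_{i=1}^n |X^{\epsilon}_i(t) - \xi_{\sigma^{-1}(i)}(t)|,
\]
but you need the reverse bound to deduce convergence of $X^{\epsilon}$ from that of $Y^{\epsilon}$. And the reverse bound is simply false in general: knowing that the sorted vectors are close says nothing, by itself, about how the unsorted coordinates match up. Concretely, $X^{\epsilon}_i(t) = Y^{\epsilon}_j(t)$ with $j = (\Sigma X^{\epsilon}(t))^{-1}(i)$, and there is no a~priori reason for $j$ to equal $\sigma^{-1}(i)$: particles can swap ranks many times. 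The paper handles this by a tracking argument (assertion~$(*)$ in the proof): one fixes $\eta,\alpha>0$ so that on the event $\{Y^{\epsilon}\in B_T(\xi,\alpha)\}$, any two coordinates of $Y^{\epsilon}$ that meet at some time $t$ must belong to the same cluster of $\xi$ at that time. One then follows the trajectory of $X^{\epsilon}_{\sigma(i)}$ from time $0$, where it equals $Y^{\epsilon}_i$, through each rank change, and concludes that whenever $X^{\epsilon}_{\sigma(i)}(t)=Y^{\epsilon}_j(t)$ with $j\neq i$, one has $i\sim_m j$ and hence $\xi_i(t)=\xi_j(t)$ (up to $O(\eta)$). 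Only then does closeness of $Y^{\epsilon}$ to $\xi$ transfer to closeness of $X^{\epsilon}_{\sigma(i)}$ to $\xi_i$. The hypothesis that all $\xi_{\sigma^{-1}}$ coincide is what makes this geometric separation argument work already from time $0$; it is not merely a well-definedness condition on the limit.

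For Part~2 your outline is closer to the paper's, but the uniform selection of $\sigma$ is obtained much more cheaply than via an occupation-time analysis: for each $\sigma\in\bar{\Sigma}x^0$ the process $(X^{\epsilon}_{\sigma(1)},\ldots,X^{\epsilon}_{\sigma(n)})$ solves the same SDE driven by the permuted Brownian motion $(W_{\sigma(1)},\ldots,W_{\sigma(n)})$, so by uniqueness in law all these processes have the same distribution. This immediately gives $\Pr_{x^0}(\Sigma X^{\epsilon}(\delta)=\sigma)=|\bar{\Sigma}x^0|^{-1}\,\Pr_{x^0}(Y^{\epsilon}\in B_T(\xi,\alpha))$ on the good event, without any Peano-type analysis. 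The conditional convergence on $\{\Sigma X^{\epsilon}(\delta)=\sigma\}$ is then handled by the Markov property at time $\delta$ together with the flow and contractivity properties of the sticky particle dynamics (Lemma~\ref{lem:contract}), feeding into the (correct) argument for Part~1.
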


Once again, by the same arguments as in Remark~\ref{rk:blum}, in the first case above, the convergence can be stated in $L^p_{\loc}(\Pr_{x^0})$, for all $p \in [1,+\infty)$, while if there exist at least $\sigma, \sigma' \in \bar{\Sigma}x^0$ such that $\xi_{\sigma^{-1}} \not= \xi_{\sigma'^{-1}}$, then in the second case above, the convergence cannot hold in probability.

\begin{proof}[Proof of Corollary~\ref{cor:rbX}]
  For all $T>0$ and $\alpha > 0$, let $B_T(\xi,\alpha)$ refer to the set of continuous paths $y \in C([0,T],D_n)$ such that $\sup_{t \in [0,T]} \max_{1 \leq i \leq n} |y_i(t)-\xi_i(t)| < \alpha$. Owing to Proposition~\ref{prop:contredsr}, for all $\alpha > 0$, $\lim_{\epsilon \dto 0} \Pr_{x^0}(Y^{\epsilon} \in B_T(\xi,\alpha)) = 1$.
  
  \sk
  Let us address the first part of the corollary. Let $\sigma$ be a fixed permutation in $\bar{\Sigma}x^0$. Note that, for all $i \in \{1, \ldots, n\}$, $X^{\epsilon}_{\sigma(i)}(0) = Y^{\epsilon}_i(0) = y^0_i$. Besides, for all $t \geq 0$, the definition of $X^{\epsilon}(t)$ yields
  \begin{equation*}
    \forall i \in \{1, \ldots, n\}, \qquad |X^{\epsilon}_{\sigma(i)}(t) - y^0_i| \leq \max_{1 \leq k \leq n} |b_k| t + \sqrt{2\epsilon} |W_{\sigma(i)}(t)|,
  \end{equation*}
  while the definition of $\xi(t)$ yields
  \begin{equation*}
    \forall i \in \{1, \ldots, n\}, \qquad |\xi_i(t) - y^0_i| \leq \max_{1 \leq k \leq n} |b_k| t.
  \end{equation*}
  As a consequence, for all $i \in \{1, \ldots, n\}$,
  \begin{equation*}
    \begin{aligned}
      |X^{\epsilon}_{\sigma(i)}(t) - \xi_i(t)|^2 & \leq 2\left(\left(\max_{1 \leq k \leq n} |b_k| t + \sqrt{2\epsilon} |W_{\sigma(i)}(t)|\right)^2 + \left(\max_{1 \leq k \leq n} |b_k| t\right)^2\right)\\
      & \leq 6 \max_{1 \leq k \leq n} (b_kt)^2 + 8\epsilon (W_{\sigma(i)}(t))^2.
    \end{aligned}
  \end{equation*}
  Therefore, for a fixed $T>0$,
  \begin{equation*}
    \sup_{t \in [0,T]} \sum_{i=1}^n |X^{\epsilon}_{\sigma(i)}(t) - \xi_i(t)|^2 \leq \sum_{i=1}^n \left(6 \max_{1 \leq k \leq n} (b_kT)^2 + 8\epsilon \sup_{t \in [0,T]} (W_{\sigma(i)}(t))^2\right),
  \end{equation*}
  so that 
  \begin{equation*}
    \begin{aligned}
      & \Exp_{x^0}\left(\sup_{t \in [0,T]} \sum_{i=1}^n |X^{\epsilon}_{\sigma(i)}(t) - \xi_i(t)|^2\ind{Y^{\epsilon} \not\in B_T(\xi,\alpha)}\right)\\
      & \qquad \leq 6n \max_{1 \leq k \leq n} (b_kT)^2\Pr_{x^0}(Y^{\epsilon} \not\in B_T(\xi,\alpha)) + 8n\epsilon \Exp_{x^0}\left(\sup_{t \in [0,T]} |W_1(t)|^2\right),
    \end{aligned}
  \end{equation*}
  therefore
  \begin{equation}\label{eq:pasBT}
    \forall \alpha > 0, \qquad \lim_{\epsilon \dto 0} \Exp_{x^0}\left(\sup_{t \in [0,T]} \sum_{i=1}^n |X^{\epsilon}_{\sigma(i)}(t) - \xi_i(t)|^2\ind{Y^{\epsilon} \not\in B_T(\xi,\alpha)}\right) = 0.
  \end{equation}
  We now fix $\eta > 0$ such that, for all $m \in \{0, \ldots, M-1\}$, $t^m < t^{m+1} - \eta$, and for all $m \in \{0, \ldots, M\}$, we denote by $I^m_{\eta}$ the interval $[0 \vee (t^m - \eta), (t^{m+1}-\eta) \wedge T]$. Then, one can choose $\alpha > 0$ small enough such that, for all $m \in \{0, \ldots, M\}$, for all $i,j \in \{1, \ldots, n\}$ such that $i < j$ and $i \not\sim_m j$,
  \begin{equation}\label{eq:BTxialpha}
    \sup_{t \in I^m_{\eta}} \xi_i(t) + \alpha < \inf_{t \in I^m_{\eta}}\xi_j(t) - \alpha,
  \end{equation}
  see Figure~\ref{fig:BTxialpha:1}. In particular, if $y=(y_1, \ldots, y_n) \in B_T(\xi,\alpha)$ and $t \in I^m_{\eta}$ is such that $y_i(t)=y_j(t)$, then $i \sim_m j$. Here, it is crucial that either all the particles have pairwise distinct initial positions, or that each group of particles sharing the same initial position forms a single cluster in the sticky particle dynamics. Otherwise, for all $\alpha > 0$,~\eqref{eq:BTxialpha} would fail for $m=0$.
  
  \begin{figure}[ht]
    \begin{center}
      \includegraphics{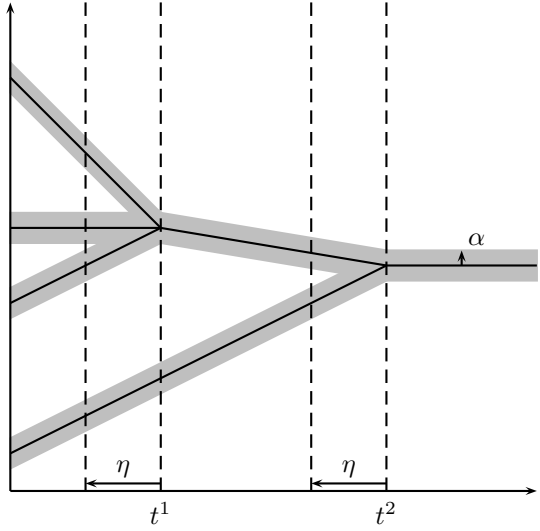}
      \caption{A trajectory of the sticky particle dynamics $\xi$ for $n=4$ particles, with $M=2$ collisions. The initial positions of the particles are pairwise distinct. For $\eta > 0$ such that $0 < t^1- \eta < t^1 < t^2 - \eta$, $\alpha > 0$ is chosen small enough for the set $B_T(\xi,\alpha)$ to satisfy the condition~\eqref{eq:BTxialpha}. A path $y$ in $B_T(\xi,\alpha)$ is necessarily contained in the gray area.}
      \label{fig:BTxialpha:1}
    \end{center}
  \end{figure}
  
  Such a choice for $\alpha$ ensures the following assertion:
  \begin{itemize}
    \item[($*$)] If $\alpha > 0$ satisfies~\eqref{eq:BTxialpha}, then on the event $\{Y^{\epsilon} \in B_T(\xi,\alpha)\}$, for all $m \in \{0, \ldots, M\}$, for all $t \in I^m_{\eta}$, for all $i,j \in \{1, \ldots, n\}$ such that $X^{\epsilon}_{\sigma(i)}(t) = Y^{\epsilon}_j(t)$, then $i \sim_m j$.
  \end{itemize}
  Before proving ($*$), let us show how this assertion allows to conclude: for all $t \in [0,T]$, there exists $m \in \{0, \ldots, M\}$ such that $t \in I^m_{\eta}$. Let us fix $i \in \{1, \ldots, n\}$ and $j$ such that $X^{\epsilon}_{\sigma(i)}(t) = Y^{\epsilon}_j(t)$. Then, by~($*$), $j \sim_m i$. On the event $\{Y^{\epsilon} \in B_T(\xi,\alpha)\}$,
  \begin{itemize}
    \item if $t \in [t^m, (t^{m+1}-\eta)\wedge T]$, then $\xi_j(t)=\xi_i(t)$, so that $|X^{\epsilon}_{\sigma(i)} - \xi_i(t)| = |Y^{\epsilon}_j(t)-\xi_j(t)| < \alpha$;
    \item if $m \geq 1$ and $t \in [t^m-\eta, t^m\wedge T]$, then $|\xi_j(t)-\xi_i(t)| = |\xi_j(t^m) - v^m_j(t^m-t) - \xi_i(t^m) + v^m_i(t^m-t)| \leq 2 \max_{1 \leq k \leq n} |b_k| \eta$, so that $|X^{\epsilon}_{\sigma(i)}(t) - \xi_i(t)| \leq |Y^{\epsilon}_j(t) - \xi_j(t)| + |\xi_j(t)-\xi_i(t)| < \alpha + 2 \max_{1 \leq k \leq n} |b_k| \eta$.
  \end{itemize}
  As a conclusion,
  \begin{equation*}
    \sup_{t \in [0,T]} \sum_{i=1}^n |X^{\epsilon}_{\sigma(i)}(t) - \xi_i(t)|^2\ind{Y^{\epsilon} \in B_T(\xi,\alpha)} \leq n\left(\alpha + 2 \max_{1 \leq k \leq n} |b_k| \eta\right)^2.
  \end{equation*}
  Taking the expectation of both sides above, recalling~\eqref{eq:pasBT}, letting $\epsilon \dto 0$, $\alpha \dto 0$ and finally $\eta \dto 0$, we conclude that
  \begin{equation*}
    \lim_{\epsilon \dto 0} \Exp_{x^0}\left(\sup_{t \in [0,T]} \sum_{i=1}^n |X^{\epsilon}_{\sigma(i)}(t) - \xi_i(t)|^2\right) = 0.
  \end{equation*}
  
  \sk
  Before addressing the second part of the corollary, let us prove the assertion~($*$). Let us assume that $\alpha > 0$ satisfies~\eqref{eq:BTxialpha} and that $Y^{\epsilon} \in B_T(\xi,\alpha)$. Let $m \in \{0, \ldots, M\}$, $t \in I^m_{\eta}$ and $i,j \in \{1, \ldots, n\}$ such that $X^{\epsilon}_{\sigma(i)}(t) = Y^{\epsilon}_j(t)$. If $i=j$, then there is nothing to prove. Let us assume that $i<j$, the arguments for the case $i>j$ being symmetric. By the continuity of the trajectories of $X^{\epsilon}_1, \ldots, X^{\epsilon}_n$ and the fact that $X^{\epsilon}_{\sigma(i)}(0)=Y^{\epsilon}_i(0)$, there exists a nondecreasing sequence of times $0 \leq t_{i,i+1} \leq \cdots \leq t_{j-1,j} \leq t$ such that, for all $k \in \{i, \ldots, j-1\}$, $X^{\epsilon}_{\sigma(i)}(t_{k,k+1}) = Y^{\epsilon}_k(t_{k,k+1}) = Y^{\epsilon}_{k+1}(t_{k,k+1})$. Certainly, there is an associated nondecreasing sequence of integers $0 \leq m_{i,i+1} \leq \cdots \leq m_{j-1,j} \leq m$ such that, for all $k \in \{i, \ldots, j-1\}$, $t_{k,k+1} \in I^{m_{k,k+1}}_{\eta}$. By~\eqref{eq:BTxialpha}, for all $k \in \{i, \ldots, j-1\}$, $k \sim_{m_{k,k+1}} k+1$, and since $m_{k,k+1} \leq m$, then $k \sim_m k+1$. Due to the transitivity of the relation $\sim_m$, we conclude that $i \sim_m i+1 \sim_m \cdots \sim_m j$.

  \sk
  Let us now address the second part of the corollary. Let us fix $T>0$, $\delta > 0$ such that $\delta < t^1 \wedge T$ and $\eta > 0$ such that $\delta < t^1 - \eta$, and for $m \in \{1, \ldots, M-1\}$, $t^m < t^{m+1} - \eta$. We slightly modify~\eqref{eq:BTxialpha} as, for all $m \in \{0, \ldots, M\}$, we denote by $I^m_{\eta,\delta}$ the interval $[\delta \vee (t^m - \eta), (t^{m+1}-\eta) \wedge T]$ and we choose $\alpha > 0$ small enough such that, for all $m \in \{0, \ldots, M\}$, for all $i,j \in \{1, \ldots, n\}$ such that $i < j$ and $i \not\sim_m j$,
  \begin{equation}\label{eq:BTxialpha:2}
    \sup_{t \in I^m_{\eta,\delta}} \xi_i(t) + \alpha < \inf_{t \in I^m_{\eta,\delta}}\xi_j(t) - \alpha,
  \end{equation}
  see Figure~\ref{fig:BTxialpha:2}. In particular, if $y=(y_1, \ldots, y_n) \in B_T(\xi,\alpha)$ and $t \in I^m_{\eta,\delta}$ is such that $y_i(t)=y_j(t)$, then $i \sim_m j$; while, if $t \in [0,\delta]$ is such that $y_i(t)=y_j(t)$, then $\xi_i(0) = \xi_j(0)$ although the relation $i \sim_0 j$ does not necessarily hold.
  
  \begin{figure}[ht]
    \begin{center}
      \includegraphics{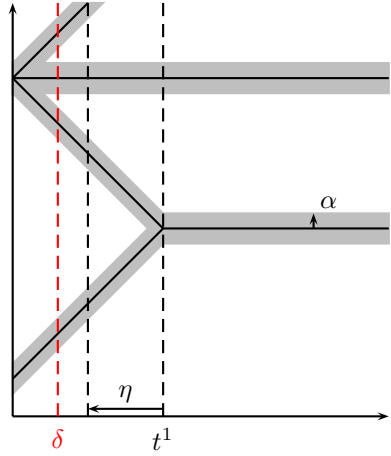}
      \caption{If some particles share the same initial position but instantaneously split into several clusters, $\delta$ is fixed in $(0,t^1 \wedge T)$ and $\eta$, $\alpha$ are taken small enough for~\eqref{eq:BTxialpha:2} to hold.}
      \label{fig:BTxialpha:2}
    \end{center}
  \end{figure}
  
  Now let $F : C([0,T], \R^n) \to \R$ be a bounded and Lipschitz continuous function, with unit Lipschitz norm. We shall prove that 
  \begin{equation*}
    \lim_{\epsilon \dto 0} \Exp_{x^0}(F(X^{\epsilon})) = \frac{1}{|\bar{\Sigma}x^0|} \sum_{\sigma \in \bar{\Sigma}x^0} F(\xi_{\sigma^{-1}}),
  \end{equation*}
  which leads to the second part of the lemma on account of the Portmanteau theorem~\cite[Theorem~2.1, p.~11]{billingsley}.
  
  First, by the boundedness of $F$, $\lim_{\epsilon \dto 0} \Exp_{x^0}\left(F(X^{\epsilon})\ind{Y^{\epsilon} \not\in B_T(\xi,\alpha)}\right) = 0$. Second,
  \begin{equation*}
    \begin{aligned}
      \Exp_{x^0}\left(F(X^{\epsilon})\ind{Y^{\epsilon} \in B_T(\xi,\alpha)}\right) & = \sum_{\sigma \in S_n}\Exp_{x^0}\left(F(X^{\epsilon})\ind{Y^{\epsilon} \in B_T(\xi,\alpha), \Sigma X^{\epsilon}(\delta) = \sigma}\right)\\
      & = \sum_{\sigma \in \bar{\Sigma} x^0}\Exp_{x^0}\left(F(X^{\epsilon})\ind{Y^{\epsilon} \in B_T(\xi,\alpha), \Sigma X^{\epsilon}(\delta) = \sigma}\right),
    \end{aligned}
  \end{equation*}
   as, on the event $\{Y^{\epsilon} \in B_T(\xi,\alpha)\}$, the continuity of the trajectories of $X^{\epsilon}_1, \ldots, X^{\epsilon}_n$ as well as the choice of $\delta$ and $\alpha$ imply that $\Sigma X^{\epsilon}(\delta) \in \bar{\Sigma}x^0$. As a consequence,
   \begin{equation*}
     \begin{aligned}
       & \left|\Exp_{x^0}\left(F(X^{\epsilon})\ind{Y^{\epsilon} \in B_T(\xi,\alpha)}\right) - \frac{1}{|\bar{\Sigma}x^0|} \sum_{\sigma \in \bar{\Sigma}x^0} F(\xi_{\sigma^{-1}})\right|\\
       & \qquad \leq \sum_{\sigma \in \bar{\Sigma}x^0} \left|\Exp_{x^0}\left(F(X^{\epsilon})\ind{Y^{\epsilon} \in B_T(\xi,\alpha), \Sigma X^{\epsilon}(\delta) = \sigma} - \frac{1}{|\bar{\Sigma}x^0|}F(\xi_{\sigma^{-1}})\right)\right|,
     \end{aligned}
   \end{equation*}
   and, for all $\sigma \in \bar{\Sigma}x^0$,
   \begin{equation}\label{eq:ExpFX}
     \begin{aligned}
       & \left|\Exp_{x^0}\left(F(X^{\epsilon})\ind{Y^{\epsilon} \in B_T(\xi,\alpha), \Sigma X^{\epsilon}(\delta) = \sigma} - \frac{1}{|\bar{\Sigma}x^0|}F(\xi_{\sigma^{-1}})\right)\right|\\
       & \qquad \leq \Exp_{x^0}\left(\left|F(X^{\epsilon})-F(\xi_{\sigma^{-1}})\right|\ind{Y^{\epsilon} \in B_T(\xi,\alpha), \Sigma X^{\epsilon}(\delta) = \sigma}\right)\\
       & \qquad \quad + ||F||_{\infty}\left|\Pr_{x^0}(Y^{\epsilon} \in B_T(\xi,\alpha), \Sigma X^{\epsilon}(\delta) = \sigma) - \frac{1}{|\bar{\Sigma}x^0|}\right|.
     \end{aligned}
   \end{equation}
   
   Let us prove that the first term in the right-hand side of~\eqref{eq:ExpFX} vanishes. The Lipschitz continuity of $F$ yields
   \begin{equation*}
     \begin{aligned}
       & \Exp_{x^0}\left(\left|F(X^{\epsilon})-F(\xi_{\sigma^{-1}})\right|\ind{Y^{\epsilon} \in B_T(\xi,\alpha), \Sigma X^{\epsilon}(\delta) = \sigma}\right)\\
       & \qquad \leq \Exp_{x^0}\left(\sup_{t \in [0,T]} \sum_{i=1}^n |X^{\epsilon}_{\sigma(i)}(t)-\xi_i(t)|\ind{Y^{\epsilon} \in B_T(\xi,\alpha), \Sigma X^{\epsilon}(\delta) = \sigma}\right).
     \end{aligned}
   \end{equation*}
   By~\eqref{eq:BTxialpha:2}, for all $t \in [0,\delta]$, for all $i,j \in \{1, \ldots, n\}$ such that $X^{\epsilon}_{\sigma(i)}(t) = Y^{\epsilon}_j(t)$, then $|X^{\epsilon}_{\sigma(i)}(t) - \xi_i(t)| \leq |Y^{\epsilon}_j(t) - \xi_j(t)| + |\xi_j(t) - \xi_i(t)| < \alpha + 2\max_{1 \leq k \leq n} |b_i| \delta$ if $Y^{\epsilon} \in B_T(\xi,\alpha)$. As a consequence,
   \begin{equation*}
     \Exp_{x^0}\left(\sup_{t \in [0,\delta]} \sum_{i=1}^n |X^{\epsilon}_{\sigma(i)}(t)-\xi_i(t)|^2\ind{Y^{\epsilon} \in B_T(\xi,\alpha), \Sigma X^{\epsilon}(\delta) = \sigma}\right) \leq n(\alpha + 2\max_{1 \leq k \leq n} |b_k| \delta)^2,
   \end{equation*}
   which vanishes when $\alpha \dto 0$ and $\delta \dto 0$.
   
   Besides, 
   \begin{equation*}
     \begin{aligned}
       & \Exp_{x^0}\left(\sup_{t \in [\delta,T]} \sum_{i=1}^n |X^{\epsilon}_{\sigma(i)}(t)-\xi_i(t)|\ind{Y^{\epsilon} \in B_T(\xi,\alpha), \Sigma X^{\epsilon}(\delta) = \sigma}\right)\\
       & \qquad = \Exp_{x^0}\left(\ind{\Sigma X^{\epsilon}(\delta) = \sigma}\Exp_{x^0}\left(\left.\sup_{s \in [0,T-\delta]} \sum_{i=1}^n |X^{\epsilon}_{\sigma(i)}(\delta+s)-\xi_i(\delta+s)|\ind{Y^{\epsilon} \in B_T(\xi,\alpha)}\right|\mathcal{F}_{\delta}\right)\right)\\  
       & \qquad \leq \Exp_{x^0}\left(\ind{\Sigma X^{\epsilon}(\delta) = \sigma}\Exp_{x^0}\left(\left.\sup_{s \in [0,T-\delta]} \sum_{i=1}^n |X^{\epsilon}_{\sigma(i)}(\delta+s)-\xi'_i(s)|\ind{Y^{\epsilon} \in B_T(\xi,\alpha)}\right|\mathcal{F}_{\delta}\right)\right)\\  
       & \qquad \quad + \Exp_{x^0}\left(\ind{\Sigma X^{\epsilon}(\delta) = \sigma}\Exp_{x^0}\left(\left.\sup_{s \in [0,T-\delta]} \sum_{i=1}^n |\xi'_i(s)-\xi_i(\delta+s)|\ind{Y^{\epsilon} \in B_T(\xi,\alpha)}\right|\mathcal{F}_{\delta}\right)\right),
     \end{aligned}
   \end{equation*}
   where, on the event $\{\Sigma X^{\epsilon}(\delta) = \sigma\} \in \mathcal{F}_{\delta}$, $(\xi'(s))_{s \geq 0}$ refers to the sticky particle process started at $(X^{\epsilon}_{\sigma(1)}(\delta), \ldots, X^{\epsilon}_{\sigma(n)}(\delta)) \in D_n$, with initial velocity vector $b$. On the one hand, on the event $\{\Sigma X^{\epsilon}(\delta) = \sigma\}$, Lemma~\ref{lem:contract} yields
   \begin{equation*}
     \begin{aligned}
       & \Exp_{x^0}\left(\left.\sup_{s \in [0,T-\delta]} \sum_{i=1}^n |\xi'_i(s)-\xi_i(\delta+s)|\ind{Y^{\epsilon} \in B_T(\xi,\alpha)}\right|\mathcal{F}_{\delta}\right)\\
       & \qquad \leq \Exp_{x^0}\left(\left.\sum_{i=1}^n |X^{\epsilon}_{\sigma(i)}(\delta)-\xi_i(\delta)|\ind{Y^{\epsilon} \in B_T(\xi,\alpha)}\right|\mathcal{F}_{\delta}\right)\\
       & \qquad \leq \Exp_{x^0}\left(\left.\sum_{i=1}^n |Y^{\epsilon}_i(\delta)-\xi_i(\delta)|\ind{Y^{\epsilon} \in B_T(\xi,\alpha)}\right|\mathcal{F}_{\delta}\right)\\
       & \qquad \leq n\alpha,
     \end{aligned}     
   \end{equation*}
   since the choice of $\delta$ ensures that, on the event $\{Y^{\epsilon} \in B_T(\xi,\alpha), \Sigma X^{\epsilon}(\delta) = \sigma\}$, $X^{\epsilon}_{\sigma(i)}(\delta) = Y^{\epsilon}_i(\delta)$ for all $i \in \{1, \ldots, n\}$. On the other hand, let $X'^{\epsilon}(s) := X^{\epsilon}(\delta+s)$ for all $s \geq 0$, and let $Y'^{\epsilon}$ be defined accordingly. Then
   \begin{equation*}
     \begin{aligned}
       & \ind{\Sigma X^{\epsilon}(\delta) = \sigma}\Exp_{x^0}\left(\left.\sup_{s \in [0,T-\delta]} \sum_{i=1}^n |X^{\epsilon}_{\sigma(i)}(\delta+s)-\xi'_i(s)|\ind{Y^{\epsilon} \in B_T(\xi,\alpha)}\right|\mathcal{F}_{\delta}\right)\\
       & \qquad \leq \ind{\Sigma X'^{\epsilon}(0) = \sigma}\Exp_{x^0}\left(\left.\sup_{s \in [0,T-\delta]} \sum_{i=1}^n |X'^{\epsilon}_{\sigma(i)}(s)-\xi'_i(s)|\ind{Y'^{\epsilon} \in B_{T-\delta}(\xi,\alpha)}\right|\mathcal{F}_{\delta}\right).
     \end{aligned}
   \end{equation*}
   By Proposition~\ref{prop:X}, $\Pr_{x^0}$-almost surely on the event $\{\Sigma X^{\epsilon}(\delta)=\sigma\}$, $\sigma$ is the only element of $\bar{\Sigma} X^{\epsilon}(\delta) = \bar{\Sigma} X'^{\epsilon}(0)$. Therefore, combining the Markov property with the first part of the proof,we obtain that
   \begin{equation*}
     \begin{aligned}
       & \ind{\Sigma X'^{\epsilon}(0) = \sigma}\Exp_{x^0}\left(\left.\sup_{s \in [0,T-\delta]} \sum_{i=1}^n |X'^{\epsilon}_{\sigma(i)}(s)-\xi'_i(s)|\ind{Y'^{\epsilon} \in B_{T-\delta}(\xi,\alpha)}\right|\mathcal{F}_{\delta}\right)\\
       & \qquad \leq n\left(\alpha + 2 \max_{1 \leq k \leq n} |b_k| \eta\right)^2.
     \end{aligned}
   \end{equation*}
   As a conclusion, the right-hand side of~\eqref{eq:ExpFX} vanishes when $\epsilon \dto 0$, $\alpha \dto 0$ and $\eta \dto 0$.
   
   We now address the second term in the right-hand side of~\eqref{eq:ExpFX}. For all $\sigma \in \bar{\Sigma}x^0$, the process $(X^{\epsilon}_{\sigma(1)}, \ldots, X^{\epsilon}_{\sigma(n)})$ solves the stochastic differential equation
   \begin{equation*}
     \forall t \geq 0, \qquad X^{\epsilon}_{\sigma(i)}(t) = y^0_i + \sum_{j=1}^n \int_{s=0}^t \ind{X^{\epsilon}_{\sigma(i)}(s) = Y^{\epsilon}_j(s)} b_j \dd s + \sqrt{2\epsilon} W_{\sigma(i)}(t).
   \end{equation*}
   Since, for all $\sigma \in \bar{\Sigma}x^0$, $(W_{\sigma(1)}, \ldots, W_{\sigma(n)})$ is a standard Brownian motion, the uniqueness in law for the solutions to the equation above (due to the Girsanov theorem or as a consequence of Proposition~\ref{prop:X} combined with the Yamada-Watanabe theorem) implies that the processes $(X^{\epsilon}_{\sigma(1)}, \ldots, X^{\epsilon}_{\sigma(n)})$ have the same distribution, for all $\sigma \in \bar{\Sigma}x^0$. As a consequence,
  \begin{equation*}
    \forall \sigma \in \bar{\Sigma}x^0, \qquad \Pr_{x^0}(Y^{\epsilon} \in B_T(\xi,\alpha), \Sigma X^{\epsilon}(\delta) = \sigma) = \frac{1}{|\bar{\Sigma}x^0|}\Pr_{x^0}(Y^{\epsilon} \in B_T(\xi,\alpha)),
  \end{equation*}
  therefore
  \begin{equation*}
    \lim_{\epsilon \dto 0} \Pr_{x^0}(Y^{\epsilon} \in B_T(\xi,\alpha), \Sigma X^{\epsilon}(\delta) = \sigma) = \frac{1}{|\bar{\Sigma}x^0|},
  \end{equation*}
  and the second term in the right-hand side of~\eqref{eq:ExpFX} vanishes when $\epsilon \dto 0$. Letting $\epsilon \dto 0$, $\alpha \dto 0$, $\eta \dto 0$ and, finally, $\delta \dto 0$, we conclude that
  \begin{equation*}
    \lim_{\epsilon \dto 0} \Exp_{x^0}(F(X^{\epsilon})) = \frac{1}{|\bar{\Sigma}x^0|} \sum_{\sigma \in \bar{\Sigma}x^0} F(\xi_{\sigma^{-1}}),
  \end{equation*}
  which completes the proof.
\end{proof}


\section{The order-based case}\label{s:clus}

We now address the general case of order-based processes. If the initial condition $x^0 \in \R^n$ is such that the particles have pairwise distinct initial positions, \ie $x^0 \not\in O_n$, by the same arguments as in the two-particle case of Section~\ref{s:n2}, in the small noise limit the $i$-th particle travels at constant velocity $b_i(\Sigma x^0)$ until the first collision in the system. Thus, the problem is reduced to the case of initial conditions $x^0 \in O_n$ for which several particles have the same position. In this case, the isolated particles have no influence on the instantaneous behaviour of the system, as they cannot immediately collide with other particles. Up to decreasing the number of particles, the problem can be reduced to the case of initial conditions where there are no isolated particles. Still, the interactions inside each group of particles with the same initial position are likely to modify the drifts of the particles in the other groups. In this section, we avoid such situations and assume that all the particles in the system share the same initial position. Since the function $\Sigma$ is invariant by translation, there is no loss of generality in taking $x^0=0$. 

In Subsection~\ref{ss:stab}, we provide an extension of the stability condition of Lemma~\ref{lem:stab} for the rank-based case, which ensures that the particles aggregate into a single cluster in the small noise limit. We describe the motion of this cluster under a slighlty stronger stability condition in Subsection~\ref{ss:cluster}. Finally, in Subsection~\ref{ss:contrex}, we exhibit the example of a system with three particles for which the particles aggregate into a single cluster in the small noise limit, although the stability condition is not satisfied.

\subsection{The stability condition}\label{ss:stab} In the rank-based case addressed in Section~\ref{s:rank}, it is observed that, in the small noise limit, if all the particles stick into a single cluster, then the velocities satisfy the {\em stability condition} that for any partition of the set $\{1, \ldots, n\}$ into a leftmost subset $\{1, \ldots, i\}$ and a rightmost subset $\{i+1, \ldots, n\}$, the average velocity of the group of leftmost particles is larger than the average velocity of the group of rightmost particles (see Lemma~\ref{lem:stab}).

The purpose of this subsection is to extend this stability condition to general order-based drift functions $b$. More precisely, the function $b : S_n \to \R^n$ is said to satisfy the stability condition~\eqref{eq:SC} if
\begin{equation}\label{eq:SC}
  \forall \sigma \in S_n, \quad \forall i \in \{1, \ldots, n-1\}, \qquad \frac{1}{i}\sum_{j=1}^i b_{\sigma(j)}(\sigma) \geq \frac{1}{n-i}\sum_{j=i+1}^n b_{\sigma(j)}(\sigma),
  \tag{SC}
\end{equation}
which has to be understood as the extension of the stability condition of Lemma~\ref{lem:stab} in Section~\ref{s:rank}.

\subsubsection{The projected system} Similarly to the two-particle case addressed in Section~\ref{s:n2}, in which the behaviour of $(X^{\epsilon}_1,X^{\epsilon}_2)$ heavily depends on the behaviour of the scalar process $Z^{\epsilon}=X^{\epsilon}_1-X^{\epsilon}_2$, the dimensionality of the problem can be reduced by subtracting the center of mass of the system to the positions of the particles. This amounts to considering the orthogonal projection $Z^{\epsilon} = (Z^{\epsilon}_1(t), \ldots, Z^{\epsilon}_n(t))_{t \geq 0}$ of $X^{\epsilon}$ on the hyperplane $M_n := \{(z_1, \ldots, z_n) \in \R^n : z_1 + \cdots + z_n = 0\}$. The orthogonal projection of $\R^n$ on $M_n$ is denoted by $\Pi$ and writes $\Pi = I_n - (1/n)J_n$, where $I_n$ is the identity matrix and $J_n$ refers to the matrix with all coefficients equal to $1$. Then, $Z^{\epsilon}$ is a diffusion on the hyperplane $M_n$ and satisfies
\begin{equation*}
  \forall t \geq 0, \qquad Z^{\epsilon}(t) = \int_{s=0}^t b^{\Pi}(\Sigma Z^{\epsilon}(s)) \dd s + \sqrt{2\epsilon} \Pi W(t),
\end{equation*}
where $b^{\Pi} := \Pi b$. Note that the stability condition~\eqref{eq:SC} rewrites
\begin{equation}\label{eq:SC:Pi}
  \forall \sigma \in S_n, \quad \forall i \in \{1, \ldots, n-1\}, \qquad \sum_{j=1}^i b^{\Pi}_{\sigma(j)}(\sigma) \geq 0.
\end{equation}

\subsubsection{Aggregation into a single cluster} In the small noise limit, all the particles $X^{\epsilon}_1, \ldots, X^{\epsilon}_n$ stick together into a single cluster if and only if $Z^{\epsilon}$ converges to $0$. This is ensured by the stability condition~\eqref{eq:SC}.

\begin{prop}\label{prop:singclus}
  Under the stability condition~\eqref{eq:SC}, for all $T>0$,
  \begin{equation*}
    \Exp_0\left(\sup_{t \in [0,T]} \sum_{i=1}^n |Z^{\epsilon}_i(t)|^2\right) \leq (4\sqrt{2}+2)(n-1)\epsilon T.
  \end{equation*}
\end{prop}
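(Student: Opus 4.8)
The plan is to adapt to the projected process $Z^\epsilon$ the energy estimate already carried out for the scalar reduced process in case~\eqref{case:concon:z} of Proposition~\ref{prop:Z} and for the reordered system in Proposition~\ref{prop:contredsr}. Concretely, I would apply the It\^o formula to the squared Euclidean norm $V(t) := \sum_{i=1}^n |Z^\epsilon_i(t)|^2$. Using the stochastic differential equation satisfied by $Z^\epsilon$ and the fact that the quadratic covariation of its martingale part $\sqrt{2\epsilon}\,\Pi W$ contributes $2\epsilon\sum_{i=1}^n \Pi_{ii}\,\dd t = 2\epsilon\,\mathrm{trace}(\Pi)\,\dd t = 2\epsilon(n-1)\,\dd t$ (since $\Pi$ is the orthogonal projection onto the $(n-1)$-dimensional hyperplane $M_n$), this gives
\begin{equation*}
  V(t) = 2\int_{s=0}^t \langle Z^\epsilon(s), b^{\Pi}(\Sigma Z^\epsilon(s))\rangle\,\dd s + 2\sqrt{2\epsilon}\,M^{\epsilon}(t) + 2\epsilon(n-1)t,
\end{equation*}
where $\langle\cdot,\cdot\rangle$ denotes the Euclidean inner product on $\R^n$ and $M^{\epsilon}(t) := \int_{s=0}^t \langle Z^\epsilon(s), \dd W(s)\rangle$, using $\Pi Z^\epsilon = Z^\epsilon$ to simplify the stochastic integral. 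Everything then reduces to proving that the drift term is nonpositive.

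The key step, and the only genuinely new ingredient, is the pointwise inequality
\begin{equation*}
  \forall z \in M_n, \qquad \langle z, b^{\Pi}(\Sigma z)\rangle \leq 0,
\end{equation*}
which I would establish by summation by parts. Fix $z \in M_n$ and let $\sigma := \Sigma z$, so that $w_k := z_{\sigma(k)}$ is nondecreasing in $k$ with $\sum_{k=1}^n w_k = 0$. Reindexing and using $\sum_k w_k = 0$ to re-centre the velocities, one has $\langle z, b^{\Pi}(\sigma)\rangle = \sum_{k=1}^n w_k\, b^{\Pi}_{\sigma(k)}(\sigma)$. Introduce the partial sums $S_i := \sum_{j=1}^i b^{\Pi}_{\sigma(j)}(\sigma)$, which satisfy $S_0 = S_n = 0$ and, by the projected form~\eqref{eq:SC:Pi} of the stability condition, $S_i \geq 0$ for all $i \in \{1, \ldots, n-1\}$. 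Abel's transform then yields
\begin{equation*}
  \sum_{k=1}^n w_k\, b^{\Pi}_{\sigma(k)}(\sigma) = \sum_{k=1}^{n-1}(w_k - w_{k+1})\,S_k \leq 0,
\end{equation*}
each summand being nonpositive because $w_k - w_{k+1} \leq 0$ and $S_k \geq 0$. This is the $n$-particle counterpart of the elementary sign computation $z\ell(z) \leq 0$ used in case~\eqref{case:concon:z} of Proposition~\ref{prop:Z}.

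Granting this inequality, the drift term drops out and $V(t) \leq 2\sqrt{2\epsilon}\,M^{\epsilon}(t) + 2\epsilon(n-1)t$. From here I would repeat verbatim the localization procedure of Proposition~\ref{prop:Z}, case~\eqref{case:concon:z}: stopping at $\tau_L := \inf\{t \geq 0 : |Z^\epsilon(t)| \geq L\}$ and invoking the Fatou lemma shows that $\Exp_0(V(t)) \leq 2\epsilon(n-1)t$ and that $(M^{\epsilon}(t))_{t \geq 0}$ is a true martingale. Since $\langle M^{\epsilon}\rangle_t = \int_{s=0}^t V(s)\,\dd s$, the It\^o isometry gives $\Exp_0(M^{\epsilon}(T)^2) \leq \epsilon(n-1)T^2$, and combining the Cauchy--Schwarz and Doob inequalities exactly as in that proof leads to
\begin{equation*}
  \Exp_0\left(\sup_{t \in [0,T]} V(t)\right) \leq 4\sqrt{2}\,\epsilon\sqrt{n-1}\,T + 2\epsilon(n-1)T.
\end{equation*}
The announced constant $(4\sqrt{2}+2)(n-1)$ then follows from the crude bound $\sqrt{n-1} \leq n-1$, valid for $n \geq 2$. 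The main obstacle is thus confined to the summation-by-parts step, which is precisely where the stability condition~\eqref{eq:SC} is used; the remainder is a direct transcription of the scheme already established in the scalar and rank-based settings.
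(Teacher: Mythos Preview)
Your proposal is correct and follows the same route as the paper: It\^o formula for $\sum_i |Z^\epsilon_i|^2$, Abel summation with the partial sums $S_i = \sum_{j \leq i} b^\Pi_{\sigma(j)}(\sigma)$ to show the drift is nonpositive under~\eqref{eq:SC}, then the localization/Doob/It\^o-isometry argument from Proposition~\ref{prop:Z}~\eqref{case:concon:z}. The one difference is your handling of the martingale part: by observing that $\Pi Z^\epsilon = Z^\epsilon$ you write $M^\epsilon(t) = \int_0^t \langle Z^\epsilon(s), \dd W(s)\rangle$ directly, which makes $\langle M^\epsilon\rangle_t = \int_0^t V(s)\,\dd s$ immediate; the paper instead keeps $M^\epsilon$ expressed against the correlated increments $\dd W^\Pi_i$ and invokes the Kunita--Watanabe inequality together with Cauchy--Schwarz to bound $\Exp_0(M^\epsilon(T)^2)$ by $(n-1)\,\Exp_0\!\int_0^T V(s)\,\dd s$. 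Your simplification is cleaner and in fact yields the sharper intermediate bound $4\sqrt{2}\,\epsilon\sqrt{n-1}\,T + 2\epsilon(n-1)T$, which you then relax via $\sqrt{n-1}\leq n-1$ to recover the stated constant.
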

\begin{proof}
  By the Itô formula, for all $t \geq 0$,
  \begin{equation*}
    \sum_{i=1}^n |Z_i^{\epsilon}(t)|^2 = 2 \int_{s=0}^t \sum_{i=1}^n Z^{\epsilon}_i(s) b_i^{\Pi}(\Sigma Z^{\epsilon}(s))\dd s + 2\sqrt{2\epsilon} M^{\epsilon}(t) + 2\epsilon (n-1) t,
  \end{equation*}
  where
  \begin{equation*}
    M^{\epsilon}(t) := \sum_{i=1}^n \int_{s=0}^t Z^{\epsilon}_i(s) \dd W^{\Pi}_i(s), \qquad W^{\Pi}_i(t) := \left(1-\frac{1}{n}\right) W_i(t) - \frac{1}{n}\sum_{j\not=i} W_j(t).
  \end{equation*}
  
  Under the stability condition~\eqref{eq:SC}, let us fix $z=(z_1, \ldots, z_n) \in M_n$, $\sigma = \Sigma z$ and compute
  \begin{equation}\label{eq:SBP}
    \begin{aligned}
      \sum_{i=1}^n z_i b^{\Pi}_i(\sigma) = \sum_{i=1}^n z_{\sigma(i)} b^{\Pi}_{\sigma(i)}(\sigma) & = \sum_{i=1}^{n-1} b^{\Pi}_{\sigma(i)}(\sigma)\sum_{j=i}^{n-1}(z_{\sigma(j)}-z_{\sigma(j+1)})\\
      & = \sum_{j=1}^{n-1}(z_{\sigma(j)}-z_{\sigma(j+1)})\sum_{i=1}^j b^{\Pi}_{\sigma(i)}(\sigma),
    \end{aligned}
  \end{equation}
  where we have used the fact that $\sum_{j=1}^n b^{\Pi}_{\sigma(j)}(\sigma) = 0$ as $b^{\Pi}(\sigma) \in M_n$. For all $j \in \{1, \ldots, n-1\}$, the definition of $\sigma = \Sigma z$ yields $z_{\sigma(j)}-z_{\sigma(j+1)} \leq 0$ while $\sum_{i=1}^j b^{\Pi}_{\sigma(i)}(\sigma) \geq 0$ by~\eqref{eq:SC:Pi}. As a conclusion,
  \begin{equation*}
    \forall z \in M_n, \qquad \sum_{i=1}^n z_i b^{\Pi}_i(\Sigma z) \leq 0.
  \end{equation*}
  
  As a consequence, for all $t \geq 0$, $\sum_{i=1}^n |Z_i^{\epsilon}(t)|^2 \leq 2\sqrt{2\epsilon} M^{\epsilon}(t) + 2\epsilon (n-1) t$, and the result follows from the same localization procedure as in the proof of Proposition~\ref{prop:Z}, case~\eqref{case:concon:z} and the use of the Kunita-Watanabe inequality to estimate
  \begin{equation*}
    \begin{aligned}
      \Exp_0(M^{\epsilon}(T)^2) & = \Exp_0(\langle M^{\epsilon}\rangle(T)) = \sum_{i,j=1}^n \Exp_0\left(\int_{s=0}^T Z^{\epsilon}_i(s)Z^{\epsilon}_j(s)\dd \langle W^{\Pi}_i, W^{\Pi}_j\rangle(s)\right)\\
      & \leq \sum_{i,j=1}^n \Exp_0\left(\sqrt{\int_{s=0}^T Z^{\epsilon}_i(s)^2\dd \langle W^{\Pi}_i\rangle(s)}\sqrt{\int_{s=0}^T Z^{\epsilon}_j(s)^2\dd \langle W^{\Pi}_j\rangle(s)}\right)\\
      & \leq \sum_{i,j=1}^n \sqrt{\Exp_0\left(\int_{s=0}^T Z^{\epsilon}_i(s)^2\dd \langle W^{\Pi}_i\rangle(s)\right)}\sqrt{\Exp_0\left(\int_{s=0}^T Z^{\epsilon}_j(s)^2\dd \langle W^{\Pi}_j\rangle(s)\right)}\\
      & = \left(1-\frac{1}{n}\right)\left(\sum_{i=1}^n\sqrt{\Exp_0\left(\int_{s=0}^T Z^{\epsilon}_i(s)^2\dd s\right)}\right)^2\\
      & \leq (n-1) \Exp_0\left(\int_{s=0}^T \sum_{i=1}^n Z^{\epsilon}_i(s)^2\dd s\right),
    \end{aligned}
  \end{equation*}
  where we have used the Cauchy-Schwarz inequality at the third line.
\end{proof}

\subsection{Velocity of the cluster}\label{ss:cluster} According to Proposition~\ref{prop:singclus}, under the stability condition~\eqref{eq:SC}, the particles stick together and form a cluster in the small noise limit. The purpose of this subsection is to determine the motion of the cluster. 

\subsubsection{The strong stability condition} In the two-particle case of Section~\ref{s:n2}, the stability condition~\eqref{eq:SC} corresponds to the case of converging/converging configurations~\eqref{case:concon} and~\eqref{case:conconl} in Proposition~\ref{prop:concon}. In order to rule out degenerate situations such as case~\eqref{case:conconl}, in which the velocity of the two-particle cluster is random and nonconstant, we introduce the following {\em strong stability condition}:
\begin{equation}\label{eq:SSC}
  \forall \sigma \in S_n, \quad \forall i \in \{1, \ldots, n-1\}, \qquad \frac{1}{i}\sum_{j=1}^i b_{\sigma(j)}(\sigma) > \frac{1}{n-i}\sum_{j=i+1}^n b_{\sigma(j)}(\sigma).
  \tag{SSC}
\end{equation}
Similarly to~\eqref{eq:SC:Pi}, the strong stability condition~\eqref{eq:SSC} rewrites
\begin{equation}\label{eq:SSC:Pi}
  \bar{b} := \inf_{\sigma \in S_n} \inf_{1 \leq i \leq n-1} \sum_{j=1}^i b^{\Pi}_{\sigma(j)}(\sigma) > 0.
\end{equation}

\begin{lem}\label{lem:lyap}
  Under the strong stability condition~\eqref{eq:SSC}, for all $z=(z_1, \ldots, z_n) \in M_n$,
  \begin{equation*}
    \sum_{i=1}^n z_i b^{\Pi}_i(\Sigma z) \leq -\bar{b}\max_{1 \leq i \leq n} |z_i|.
  \end{equation*}
\end{lem}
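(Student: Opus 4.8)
The plan is to reuse the Abel-summation identity~\eqref{eq:SBP} that already appears in the proof of Proposition~\ref{prop:singclus}, and then simply sharpen the sign estimate by inserting the quantitative lower bound provided by the strong stability condition. Writing $\sigma = \Sigma z$, so that $z_{\sigma(1)} \leq \cdots \leq z_{\sigma(n)}$, the identity~\eqref{eq:SBP} reads
\begin{equation*}
  \sum_{i=1}^n z_i b^{\Pi}_i(\sigma) = \sum_{j=1}^{n-1}(z_{\sigma(j)}-z_{\sigma(j+1)})\sum_{i=1}^j b^{\Pi}_{\sigma(i)}(\sigma).
\end{equation*}
By definition of $\sigma$, each factor $z_{\sigma(j)}-z_{\sigma(j+1)}$ is nonpositive, while by the strong stability condition in the form~\eqref{eq:SSC:Pi} each partial sum satisfies $\sum_{i=1}^j b^{\Pi}_{\sigma(i)}(\sigma) \geq \bar{b} > 0$.

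The key step is then that multiplying a nonpositive number by something at least $\bar{b}$ can only decrease it: for every $j$,
\begin{equation*}
  (z_{\sigma(j)}-z_{\sigma(j+1)})\sum_{i=1}^j b^{\Pi}_{\sigma(i)}(\sigma) \leq \bar{b}\,(z_{\sigma(j)}-z_{\sigma(j+1)}).
\end{equation*}
Summing over $j$ and telescoping, I obtain
\begin{equation*}
  \sum_{i=1}^n z_i b^{\Pi}_i(\Sigma z) \leq \bar{b}\sum_{j=1}^{n-1}(z_{\sigma(j)}-z_{\sigma(j+1)}) = -\bar{b}\,(z_{\sigma(n)}-z_{\sigma(1)}).
\end{equation*}

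It remains to compare the range $z_{\sigma(n)}-z_{\sigma(1)}$ with $\max_{1\leq i\leq n}|z_i|$, and this is where the constraint $z \in M_n$ enters. Since $\sum_{i=1}^n z_i = 0$, the largest coordinate $z_{\sigma(n)} = \max_i z_i$ is nonnegative and the smallest coordinate $z_{\sigma(1)} = \min_i z_i$ is nonpositive. Hence $z_{\sigma(n)} = |z_{\sigma(n)}|$ and $-z_{\sigma(1)} = |z_{\sigma(1)}|$, so
\begin{equation*}
  z_{\sigma(n)} - z_{\sigma(1)} = |z_{\sigma(n)}| + |z_{\sigma(1)}| \geq \max\bigl(|z_{\sigma(n)}|,|z_{\sigma(1)}|\bigr) = \max_{1\leq i\leq n}|z_i|,
\end{equation*}
the last equality holding because the extreme coordinates carry the largest absolute value. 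Combining the two displays yields the claimed bound $\sum_{i=1}^n z_i b^{\Pi}_i(\Sigma z) \leq -\bar{b}\max_i|z_i|$.

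I expect no genuine obstacle here: the argument is an entirely elementary chain of inequalities once~\eqref{eq:SBP} is invoked. The only mildly delicate point worth stating explicitly is the final comparison, which relies essentially on the zero-mean condition defining $M_n$ to guarantee that the minimal and maximal coordinates have opposite signs, so that the range dominates the sup-norm; without this the estimate would only control $z_{\sigma(n)}-z_{\sigma(1)}$ rather than $\max_i|z_i|$.
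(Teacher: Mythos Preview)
Your proof is correct and follows exactly the same route as the paper: apply the Abel identity~\eqref{eq:SBP}, use the lower bound~\eqref{eq:SSC:Pi} on each partial sum to get $\sum_i z_i b^{\Pi}_i(\Sigma z) \leq -\bar{b}(z_{\sigma(n)}-z_{\sigma(1)})$, and then observe that the zero-mean constraint forces the range to dominate $\max_i|z_i|$. You have simply spelled out the steps that the paper compresses into a single sentence invoking ``an easy barycentric inequality.''
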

\begin{proof}
  By~\eqref{eq:SBP} and~\eqref{eq:SSC:Pi}, $\sum_{i=1}^n z_i b^{\Pi}_i(\sigma) = \leq -\bar{b} (z_{\sigma(n)}-z_{\sigma(1)})$, where $\sigma = \Sigma z$. Since $z_{\sigma(1)} \leq \cdots \leq z_{\sigma(n)}$ and $z_1 + \cdots + z_n = 0$, it is an easy barycentric inequality that $z_{\sigma(n)}-z_{\sigma(1)} \geq \max_{1 \leq i \leq n} |z_i|$, and the proof is completed.
\end{proof}

\subsubsection{Changing the space-time scale} Let $X^1$ refer to the solution to 
\begin{equation*}
  \forall t \geq 0, \qquad X^1(t) = \int_{s=0}^t b(\Sigma X^1(s)) \dd s + \sqrt{2} W(t). 
\end{equation*}
In the rank-based case, the strong stability condition~\eqref{eq:SSC} was identified by Pal and Pitman~\cite[Remark, p.~2187]{pp} as a necessary and sufficient condition for the law of process $Z^1 = \Pi X^1$ to converge in total variation to its unique stationary distribution. In the order-based case, the interpretation of the small noise limit of $Z^{\epsilon}$ in terms of the long time behaviour of the process $Z^1$ can be made explicit through the following space-time scale change.

For all $\epsilon > 0$, let us define $\tilde{X}^{\epsilon}(t) := \epsilon X^1(t/\epsilon)$. Then it is straightforward to check that there exists a standard Brownian motion $\tilde{W}^{\epsilon}$ in $\R^n$ on $(\Omega, \mathcal{F}, \Pr_0)$ such that
\begin{equation*}
  \forall t \geq 0, \qquad \tilde{X}^{\epsilon}(t) = \int_{s=0}^t b(\Sigma \tilde{X}^{\epsilon}(s)) \dd s + \sqrt{2\epsilon} \tilde{W}^{\epsilon}(t).
\end{equation*}
Since the solutions to the equation above are unique in law (as a consequence of the Girsanov theorem, or by Proposition~\ref{prop:X} combined with the Yamada-Watanabe theorem), we deduce that the processes $\tilde{X}^{\epsilon}$ and $X^{\epsilon}$ have the same distribution. As a consequence, the process $Z^{\epsilon}$ has the same distribution as the process $\tilde{Z}^{\epsilon}$ defined by $\tilde{Z}^{\epsilon}(t) = \epsilon Z^1(t/\epsilon)$.

\subsubsection{Long time behaviour of $Z^1$} This paragraph is dedicated to the study of the stochastic differential equation
\begin{equation}\label{eq:ZMn}
  \forall t \geq 0, \qquad Z(t) = z^0 + \int_{s=0}^t b^{\Pi}(\Sigma Z(s))\dd s + \sqrt{2}\Pi W(t),
\end{equation}
where $z^0 \in M_n$. When $z^0=0$, the process $Z^1$ introduced above solves~\eqref{eq:ZMn}.

\begin{lem}\label{lem:ZMn}
  For all $z^0 \in M_n$, the stochastic differential equation~\eqref{eq:ZMn} admits a unique weak solution in $M_n$, defined on some probability space endowed with the probability distribution $P_{z^0}$ and the expectation $E_{z^0}$. It generates a Feller semigroup in $M_n$, in the sense that, for all continuous and bounded function $f : M_n \to \R$, the function $z \mapsto E_z(f(Z(t)))$ is continuous and bounded on $M_n$.
\end{lem}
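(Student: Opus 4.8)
The plan is to reduce~\eqref{eq:ZMn} to a nondegenerate stochastic differential equation with bounded measurable drift, for which weak well-posedness is classical, and then to establish the Feller property through a Girsanov representation. First I would fix an orthonormal basis of the hyperplane $M_n$ and observe that, in the associated coordinates, $\Pi W$ is a standard $(n-1)$-dimensional Brownian motion, since $\Pi$ is the orthogonal projection onto $M_n$ and therefore acts as the identity on $M_n$. The drift $b^{\Pi} = \Pi b$ is bounded (as $b$ takes finitely many values) and measurable, while the diffusion coefficient $\sqrt{2}\Pi$ restricts to $\sqrt{2}$ times the identity on $M_n$, hence is uniformly nondegenerate. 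Consequently, exactly as in Proposition~\ref{prop:X}, the theorem of Veretennikov~\cite{veretennikov} provides strong existence and pathwise uniqueness, whence weak existence and uniqueness in law, which defines $P_{z^0}$ and $E_{z^0}$.

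For the Feller property, the boundedness of $z \mapsto E_z(f(Z(t)))$ by $\|f\|_{\infty}$ is immediate, so the point is continuity. I would represent the semigroup by the Girsanov theorem: letting $\beta := \Pi W$ denote a standard Brownian motion on $M_n$ and writing, under a reference measure, the driftless process started at $z$ as $z + \sqrt{2}\beta$, uniqueness in law yields
\begin{equation*}
  E_z(f(Z(t))) = \Exp\left(f(z+\sqrt{2}\beta(t))\,\mathcal{E}_z(t)\right),
\end{equation*}
where $\mathcal{E}_z(t)$ is the exponential martingale built from the integrand $s \mapsto b^{\Pi}(\Sigma(z+\sqrt{2}\beta(s)))$. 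Since $b^{\Pi}$ is bounded, the corresponding integral of $|b^\Pi|^2$ is deterministically controlled, so Novikov's condition holds, $\mathcal{E}_z$ is a genuine martingale, and $\sup_z \Exp(\mathcal{E}_z(t)^2) < +\infty$ with a bound depending only on $\|b^{\Pi}\|_{\infty}$ and $t$.

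The main obstacle is to pass to the limit in this representation as $z_m \to z$, because the drift $b^{\Pi}\circ\Sigma$ is discontinuous on $O_n$. The key observation that resolves it is that $O_n \cap M_n$ is a finite union of proper subspaces of $M_n$, hence Lebesgue-negligible in $M_n$, while for each $s>0$ the law of $\beta(s)$ is a nondegenerate Gaussian on $M_n$; therefore $\Pr(z + \sqrt{2}\beta(s) \in O_n) = 0$ for every $s>0$. At any point $z+\sqrt{2}\beta(s) \notin O_n$ the map $\Sigma$ is locally constant, so $b^{\Pi}(\Sigma(z_m+\sqrt{2}\beta(s))) \to b^{\Pi}(\Sigma(z+\sqrt{2}\beta(s)))$ for almost every $(s,\omega)$. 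As the integrands are uniformly bounded, dominated convergence gives their convergence in $L^2(\dd s \otimes \dd\Pr)$, whence both the stochastic integral (by the It\^o isometry) and the finite-variation part defining $\log \mathcal{E}_{z_m}(t)$ converge, so that $\mathcal{E}_{z_m}(t) \to \mathcal{E}_z(t)$ in probability.

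Finally I would combine the almost sure convergence $f(z_m+\sqrt{2}\beta(t)) \to f(z+\sqrt{2}\beta(t))$, coming from the continuity of $f$, with the uniform $L^2$ bound on $\mathcal{E}_{z_m}(t)$, which makes the products $f(z_m+\sqrt{2}\beta(t))\,\mathcal{E}_{z_m}(t)$ uniformly integrable. Convergence in probability of these products together with uniform integrability yields convergence of their expectations, that is $E_{z_m}(f(Z(t))) \to E_z(f(Z(t)))$. This establishes the continuity of $z \mapsto E_z(f(Z(t)))$ and completes the proof.
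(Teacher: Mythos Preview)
Your proposal is correct and follows essentially the same route as the paper: reduce the degenerate $n$-dimensional equation to a nondegenerate $(n-1)$-dimensional one (the paper does this via the coordinate map $\varphi:z'\mapsto(z',-z_1'-\cdots-z_{n-1}')$ and checks that $\Pi'(\Pi')^*=I_{n-1}-(1/n)J_{n-1}$ is positive definite, you do it via an orthonormal basis of $M_n$), and then invoke the Girsanov theorem for both weak well-posedness and the Feller property. The paper simply asserts that the Feller property follows from Girsanov without further comment; your explicit argument---using that $O_n\cap M_n$ is Lebesgue-null in $M_n$, dominated convergence for the drift integrand, the It\^o isometry for the stochastic integral, and uniform integrability via the $L^2$ bound on the exponential martingale---fills in precisely the details that make this invocation rigorous.
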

\begin{proof}
  Any point $z=(z_1, \ldots, z_n) \in M_n$ is parametrized by the vector of its first $n-1$ coordinates $z'=(z_1, \ldots, z_{n-1}) \in \R^{n-1}$ through the continuous mapping $\varphi : z' \in \R^{n-1} \mapsto (z_1, \ldots, z_{n-1}, -(z_1 + \cdots + z_{n-1})) \in M_n$. Therefore, it is equivalent to prove weak existence and uniqueness and the Feller property for the stochastic differential equation
  \begin{equation}\label{eq:Zprime}
    \forall t \geq 0, \qquad Z'(t) = {z^0}' + \int_{s=0}^t (b^{\Pi})'(\Sigma \varphi(Z'(s)))\dd s + \sqrt{2}\Pi' W(t),
  \end{equation}
  in $\R^{n-1}$, where $(b^{\Pi})'(\sigma) = (b^{\Pi}_1(\sigma), \ldots, b^{\Pi}_{n-1}(\sigma))$ and $\Pi'$ is the rectangular matrix obtained by removing the $n$-th line from $\Pi$. Little algebra yields $\Pi'(\Pi')^* = I_{n-1} - (1/n)J_{n-1}$ which is positive definite. As a consequence, weak existence and uniqueness as well as the Feller property for~\eqref{eq:Zprime} follow from the Girsanov theorem.
\end{proof}

\begin{prop}\label{prop:pages}
  Under the strong stability condition~\eqref{eq:SSC}, the solution to~\eqref{eq:ZMn} admits a unique stationary probability distribution $\mu$, and it is {\em positive recurrent} in the sense that, for all measurable and bounded function $f : M_n \to \R$, 
  \begin{equation*}
    \forall z^0 \in M_n, \qquad \lim_{t \to +\infty} \frac{1}{t} \int_{s=0}^t f(Z(s)) \dd s = \int_{M_n} f \dd \mu, \qquad \text{$P_{z^0}$-almost surely}.
  \end{equation*}
\end{prop}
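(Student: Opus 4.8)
The plan is to exhibit an explicit Lyapunov function for~\eqref{eq:ZMn} and to combine the resulting Foster--Has'minskii drift condition with the non-degeneracy of the diffusion already established in Lemma~\ref{lem:ZMn}. I would take $V(z) := |z|^2 = \sum_{i=1}^n z_i^2$ as a candidate Lyapunov function on $M_n$. Since the quadratic covariation of $Z$ is $2\Pi$, the It\^o formula gives for its generator $\mathcal{L}V(z) = 2\sum_{i=1}^n z_i b^{\Pi}_i(\Sigma z) + 2(n-1)$, the constant $2(n-1)$ being $2\,\mathrm{tr}(\Pi)$ as in the proof of Proposition~\ref{prop:singclus}. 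Lemma~\ref{lem:lyap} then yields, under~\eqref{eq:SSC}, the bound $\mathcal{L}V(z) \leq 2(n-1) - 2\bar{b}\max_{1 \leq i \leq n}|z_i| \leq 2(n-1) - \frac{2\bar{b}}{\sqrt{n}}\sqrt{V(z)}$, where I used $\max_i |z_i| \geq |z|/\sqrt{n}$. In particular $\mathcal{L}V \leq -1$ outside a compact subset $K$ of $M_n$, which is the drift pushing the process back toward the origin.

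From this bound, existence of an invariant probability measure follows by a standard tightness argument: Dynkin's formula together with the drift condition furnishes a uniform-in-time control of the time-averaged occupation of the complement of $K$, so that the family of occupation measures $t^{-1}\int_{s=0}^t \delta_{Z(s)}\dd s$ is tight, and any weak limit point is stationary by the Feller property of Lemma~\ref{lem:ZMn} (Krylov--Bogolyubov).

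The main work lies in upgrading existence to uniqueness and to the \emph{almost-sure} ergodic theorem, which I would obtain by proving that $Z$ is positive Harris recurrent. The crucial input is irreducibility with an everywhere-positive transition density, which does not follow from the Feller property alone since the drift is only bounded and discontinuous. Here I would invoke the Girsanov theorem: as $b^{\Pi}\circ\Sigma$ is bounded, the law of $Z$ restricted to any $[0,t]$ is mutually absolutely continuous, with strictly positive density, with respect to the law of the driftless process $z^0 + \sqrt{2}\,\Pi W$; and the latter is, on $M_n$, a non-degenerate Gaussian process because $\Pi'(\Pi')^* = I_{n-1} - (1/n)J_{n-1}$ is positive definite, as noted in Lemma~\ref{lem:ZMn}. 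Consequently the transition kernel of $Z$ admits a positive density on all of $M_n$, so $Z$ is irreducible and strong Feller. The drift condition $\mathcal{L}V \leq -1$ off the compact set $K$, combined with this irreducibility, gives positive Harris recurrence, whence the uniqueness of the stationary distribution $\mu$ and the pathwise ergodic theorem for bounded measurable $f$ follow from the classical theory of Harris recurrent Markov processes (Has'minskii, Meyn--Tweedie).

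The hard part is precisely this last step: verifying irreducibility and positivity of the transition density in spite of the discontinuity of the drift, and then appealing to the correct ergodic theorem delivering the almost-sure convergence of the time averages rather than a merely in-mean or in-distribution statement. By contrast, the Lyapunov computation itself is routine once Lemma~\ref{lem:lyap} is available.
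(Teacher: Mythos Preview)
Your proposal is correct and follows essentially the same approach as the paper: the same Lyapunov function $V(z)=|z|^2$, the same drift bound via Lemma~\ref{lem:lyap}, and the same Girsanov argument (through the non-degeneracy of $\Pi'(\Pi')^*$) to obtain absolute continuity of the transition kernel with respect to Lebesgue measure. The only cosmetic difference is the packaging of the final step---the paper cites Ethier--Kurtz for existence, Rey-Bellet for uniqueness, and Pag\`es for the extension of the ergodic theorem to every initial condition, whereas you invoke the Harris-recurrence framework of Has'minskii/Meyn--Tweedie; these are equivalent standard routes.
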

\begin{proof}
  The proof closely follows the lines of Pagès~\cite[Théorème~1, p.~148]{pages}, and we prove existence, uniqueness and positive recurrence separately.

  {\em Proof of existence.} The existence of a stationary probability distribution relies on the fact that the function $V$ defined on $M_n$ by $V(z)=\sum_{i=1}^n |z_i|^2$ is a Lyapunov function for~\eqref{eq:ZMn}. Indeed, let $L$ refer to the infinitesimal generator of $Z$. By the Itô formula,
  \begin{equation*}
    \forall z \in M_n, \qquad LV(z) = 2 \sum_{i=1}^n z_i b^{\Pi}_i(\Sigma z) + 2(n-1).
  \end{equation*}
  By Lemma~\ref{lem:lyap}, under the strong stability condition~\eqref{eq:SSC}, 
  \begin{equation*}
    LV(z) \leq -\bar{b}\max_{1 \leq i \leq n} |z_i| + 2(n-1),
  \end{equation*}
  and the conclusion follows from Ethier and Kurtz~\cite[Theorem~9.9, p.~243]{ethier}. 

  {\em Proof of uniqueness.} The uniqueness of a stationary probability distribution is a consequence of the regularity of the semigroup associated with the diffusion process $Z'$ in $\R^{n-1}$ introduced in the proof of Lemma~\ref{lem:ZMn}. More precisely, since $\Pi'(\Pi')^*$ is positive definite, it follows from the Girsanov theorem that, for all ${z^0}' \in \R^{n-1}$, for all $t > 0$, the distribution of $Z'(t)$ is equivalent to the Lebesgue measure on $\R^{n-1}$. By the same arguments as in the proof of~\cite[Proposition~8.1, p.~29]{rb}, this implies that the process $Z'$ does not admit more than one stationary probability distribution. The conclusion follows from the fact that the pushforward by the mapping $\varphi$ induces a one-to-one correspondance between the stationary distributions of $Z'$ and the stationary distributions of $Z$.

  {\em Proof of positive recurrence.} Since $\mu$ is the unique stationary probability distribution for the Feller process $Z$, it is ergodic~\cite[Proposition~3.5, p.~8]{rb}; therefore the pointwise ergodic theorem~\cite[Theorem~3.4, p.~8]{rb} ensures that, for all measurable and bounded function $f : M_n \to \R$,
  \begin{equation*}
    \text{for $\mu$-almost all $z^0 \in M_n$}, \quad \text{$P_{z^0}$-almost surely}, \qquad \lim_{t \to +\infty} \frac{1}{t} \int_{s=0}^t f(Z(s))\dd s = \int_{M_n} f \dd \mu.
  \end{equation*}
  The extension of this statement to {\em all} initial condition $z^0 \in M_n$ relies on the regularity of the semigroup associated with $Z$, and we refer to Pagès~\cite[Théorème~1, (b), p.~149]{pages} for a proof.
\end{proof}

\subsubsection{Velocity of the cluster} The description of the small noise limit can now be completed under the strong stability condition~\eqref{eq:SSC}.

\begin{prop}\label{prop:v}
  Under the strong stability condition~\eqref{eq:SSC}, the quantity 
  \begin{equation}\label{eq:v}
    v := \int_{z \in M_n} b_i(\Sigma z) \mu(\dd z), 
  \end{equation}
  with $\mu$ given by Proposition~\ref{prop:pages}, does not depend on $i \in \{1, \ldots, n\}$. Besides, $X^{\epsilon}$ converges in $L^2_{\loc}(\Pr_0)$ to $(vt, \ldots, vt)_{t \geq 0}$.
\end{prop}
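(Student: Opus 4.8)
The plan is to split the displacement of each particle into its projection onto $M_n$, which has already been controlled, and the motion of the centre of mass, which carries the cluster velocity. Write $\bar{X}^{\epsilon}(t) := \frac{1}{n}\sum_{i=1}^n X^{\epsilon}_i(t)$ for the centre of mass, so that $X^{\epsilon}_i(t) = Z^{\epsilon}_i(t) + \bar{X}^{\epsilon}(t)$. Then
\[
  \sum_{i=1}^n |X^{\epsilon}_i(t) - vt|^2 \leq 2\sum_{i=1}^n |Z^{\epsilon}_i(t)|^2 + 2n\,|\bar{X}^{\epsilon}(t) - vt|^2,
\]
and since Proposition~\ref{prop:singclus} already gives $\Exp_0(\sup_{t \in [0,T]} \sum_i |Z^{\epsilon}_i(t)|^2) \to 0$, it suffices to prove that $\Exp_0(\sup_{t \in [0,T]} |\bar{X}^{\epsilon}(t) - vt|^2) \to 0$.

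First I would establish that $v$ does not depend on $i$. The first moments $\int_{M_n} |z_i|\,\mu(\dd z)$ are finite: the Lyapunov estimate $LV(z) \leq -\bar{b}\max_i|z_i| + 2(n-1)$ from the proof of Proposition~\ref{prop:pages} yields $\int_{M_n} \max_i |z_i|\,\mu(\dd z) \leq 2(n-1)/\bar{b}$. Running the stationary solution $Z$ of~\eqref{eq:ZMn} with $Z(0) \sim \mu$, the process $Z_i(t) - Z_i(0) - \int_0^t b^{\Pi}_i(\Sigma Z(s))\dd s$ is a centred martingale; taking expectations and using stationarity forces $\int_{M_n} b^{\Pi}_i(\Sigma z)\,\mu(\dd z) = 0$ for every $i$. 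Since $b^{\Pi} = \Pi b$ with $\Pi = I_n - (1/n)J_n$, this means $\int b_i(\Sigma z)\,\mu(\dd z) = \frac{1}{n}\sum_{j=1}^n \int b_j(\Sigma z)\,\mu(\dd z)$ for every $i$, which is the desired $i$-independence; in particular $v = \int_{M_n} g\,\dd\mu$, where $g(z) := \frac{1}{n}\sum_{i=1}^n b_i(\Sigma z)$ is the instantaneous drift of the centre of mass.

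Averaging~\eqref{eq:sde} over the coordinates and using again that $\Sigma X^{\epsilon}(s) = \Sigma Z^{\epsilon}(s)$ gives $\bar{X}^{\epsilon}(t) = \int_0^t g(Z^{\epsilon}(s))\dd s + \sqrt{2\epsilon}\,\bar{W}(t)$, with $\bar{W} := \frac{1}{n}\sum_i W_i$. The Brownian term is harmless: by the Doob inequality, $\Exp_0(\sup_{t \in [0,T]}|\sqrt{2\epsilon}\,\bar{W}(t)|^2)$ is of order $\epsilon T$ and vanishes. Everything therefore reduces to showing that $A^{\epsilon}(t) := \int_0^t (g(Z^{\epsilon}(s)) - v)\dd s$ satisfies $\Exp_0(\sup_{t \in [0,T]}|A^{\epsilon}(t)|^2) \to 0$. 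Here I would invoke the space-time rescaling: $Z^{\epsilon}$ has the same law as $(\epsilon Z^1(t/\epsilon))_{t \geq 0}$, and because $g(\lambda z) = g(z)$ for $\lambda > 0$, the path functional $A^{\epsilon}$ has the same law as $\tilde{A}^{\epsilon}(t) := \epsilon \int_0^{t/\epsilon}(g(Z^1(u)) - v)\dd u$.

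The crux, and the step I expect to be the main obstacle, is upgrading the ergodic (almost sure, Ces\`aro-type) convergence provided by Proposition~\ref{prop:pages} into the uniform-in-time $L^2$ statement. Applying Proposition~\ref{prop:pages} with $z^0 = 0$ to the bounded measurable function $g - v$ gives $\frac{1}{S}\int_0^S (g(Z^1(u)) - v)\dd u \to 0$ almost surely as $S \to +\infty$. A routine sublinearity argument promotes this to $\frac{1}{S}\sup_{r \in [0,S]} |\int_0^r (g(Z^1(u)) - v)\dd u| \to 0$ almost surely, whence, writing $S = t/\epsilon$, $\sup_{t \in [0,T]}|\tilde{A}^{\epsilon}(t)| \to 0$ almost surely as $\epsilon \dto 0$. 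Finally, since $g$ is bounded, $\sup_{t \in [0,T]}|\tilde{A}^{\epsilon}(t)| \leq (\|g\|_{\infty} + |v|)T$ deterministically, so dominated convergence turns the almost sure convergence into $\Exp_0(\sup_{t \in [0,T]}|\tilde{A}^{\epsilon}(t)|^2) \to 0$, and transferring back through the distributional identity completes the argument.
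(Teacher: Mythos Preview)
Your proof is correct and rests on the same engine as the paper's---the space--time rescaling $Z^{\epsilon}\overset{d}{=}(\epsilon Z^1(\cdot/\epsilon))$ together with the pointwise ergodic theorem of Proposition~\ref{prop:pages}---but you organise the argument differently in two respects. First, you decompose $X^{\epsilon}$ into its projection $Z^{\epsilon}$ (controlled by Proposition~\ref{prop:singclus}) and its centre of mass, whereas the paper works coordinate by coordinate, writing $X^{\epsilon}_i(t)=\sum_{\sigma} b_i(\sigma)\zeta^{\epsilon}_{\sigma}(t)+\sqrt{2\epsilon}W_i(t)$ in terms of the occupation times and deducing the $i$-independence of $v$ only \emph{a posteriori} from Proposition~\ref{prop:singclus}; your direct stationarity argument for $\int b^{\Pi}_i(\Sigma z)\mu(\dd z)=0$ is a nice alternative that identifies $v$ with the average drift $\int g\,\dd\mu$ from the outset. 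Second, to pass from the pointwise ergodic limit to uniform-in-$t$ $L^2$ convergence, the paper argues via tightness: the Lipschitz bound $|R^{\epsilon}_i(t)-R^{\epsilon}_i(s)|\le C|t-s|$ plus Arzel\`a--Ascoli gives convergence in probability in $C([0,T],\R)$, and boundedness upgrades this to $L^2$. Your route---promote the a.s.\ Ces\`aro limit to $\sup_{r\le S}|H(r)|/S\to 0$ via the standard sublinearity argument, then apply dominated convergence---is a bit more elementary since it bypasses the tightness machinery, at the cost of having to transfer the final estimate back through the distributional identity $A^{\epsilon}\overset{d}{=}\tilde A^{\epsilon}$. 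Both routes buy the same conclusion with comparable effort.
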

\begin{proof}
  For all $\sigma \in S_n$, let $\zeta^{\epsilon}_{\sigma}$ refer to the occupation time of the process $\Sigma X^{\epsilon}$ in $\sigma$ defined by
  \begin{equation*}
    \forall t \geq 0, \qquad \zeta^{\epsilon}_{\sigma}(t) := \int_{s=0}^t \ind{\Sigma X^{\epsilon}(s) = \sigma} \dd s.
  \end{equation*}
  Certainly, for all $i \in \{1, \ldots, n\}$,
  \begin{equation*}
    \forall t \geq 0, \qquad X^{\epsilon}_i(t) = \sum_{\sigma \in S_n} b_i(\sigma) \zeta^{\epsilon}_{\sigma}(t) + \sqrt{2\epsilon} W_i(t).
  \end{equation*}

  On the other hand, for a fixed $t > 0$,
  \begin{equation*}
    \frac{\zeta^{\epsilon}_{\sigma}(t)}{t} = \frac{1}{t} \int_{s=0}^t \ind{\Sigma Z^{\epsilon}(s) = \sigma} \dd s
  \end{equation*}
  has the same distribution as
  \begin{equation*}
    \frac{1}{t} \int_{s=0}^t \ind{\Sigma \tilde{Z}^{\epsilon}(s) = \sigma} \dd s = \frac{1}{t} \int_{s=0}^t \ind{\Sigma (\epsilon Z^1(s/\epsilon))=\sigma} \dd s = \frac{\epsilon}{t} \int_{u=0}^{t/\epsilon} \ind{\Sigma  Z^1(u)=\sigma} \dd u.
  \end{equation*}
  By the weak uniqueness for the solution to~\eqref{eq:ZMn}, Proposition~\ref{prop:pages} can be applied to $Z^1$ and yields 
  \begin{equation*}
    \lim_{\epsilon \dto 0} \frac{\epsilon}{t} \int_{u=0}^{t/\epsilon} \ind{\Sigma  Z^1(u)=\sigma} \dd u = \int_{z \in M_n} \ind{\Sigma z = \sigma} \dd \mu, \qquad \text{$\Pr_0$-almost surely}.
  \end{equation*}
  
  Thus, for all $t \geq 0$, the random variable $R_i^{\epsilon}(t) := \sum_{\sigma \in S_n} b_i(\sigma) \zeta^{\epsilon}_{\sigma}(t)$ converges in probability, in $\R$, to the deterministic limit $v_i t$ where $v_i$ is the right-hand side of~\eqref{eq:v}. As a consequence, the process $R_i^{\epsilon}$ converges in finite-dimensional distribution to the process $v_i t$. On the other hand, since
  \begin{equation*}
    \forall 0 \leq s \leq t, \qquad |R_i^{\epsilon}(t)-R_i^{\epsilon}(s)| = \left| \int_{r=s}^t b_i(\Sigma X^{\epsilon}(r)) \dd r\right| \leq \max_{\sigma \in S_n} |b_i(\sigma)| (t-s),
  \end{equation*}
  the modulus of continuity of $R_i^{\epsilon}$ is uniformly bounded with respect to $\epsilon$. Therefore, by the Arzelà-Ascoli theorem, the family of the laws of $R_i^{\epsilon}$ is tight and, for all $T>0$, $R_i^{\epsilon}$ converges in probability, in $C([0,T],\R)$, to the deterministic process $v_i t$. Finally, since
  \begin{equation*}
    \forall t \in [0,T], \qquad |R^{\epsilon}_i(t)| \leq \max_{\sigma \in S_n} |b_i(\sigma)| T,
  \end{equation*}
  then $R_i^{\epsilon}$ is bounded on $[0,T]$ uniformly in $\epsilon$, therefore the convergence of $R_i^{\epsilon}$ to $v_i t$ also holds in $L^2_{\loc}(\Pr_0)$. As a consequence, $X^{\epsilon}_i = R^{\epsilon}_i + \sqrt{2\epsilon} W_i$ converges to $v_i t$ in $L^2_{\loc}(\Pr_0)$, so that $X^{\epsilon}$ converges to $(v_1 t, \ldots, v_n t)$ in $L^2_{\loc}(\Pr_0)$. The fact that $v_i$ does not depend on $i$ finally follows from Proposition~\ref{prop:singclus}.
\end{proof}

\begin{rk}\label{rk:velocity}
  In the two-particle case addressed in Section~\ref{s:n2}, the explicit computation of the velocity of the cluster as a function of $b$ was made possible by the fact that the two quantities $\zeta^{\epsilon}_{(12)}(t)$ and $\zeta^{\epsilon}_{(21)}(t)$ satisfy the two independent relations
  \begin{equation*}
    \begin{aligned}
      & \zeta^{\epsilon}_{(12)}(t) + \zeta^{\epsilon}_{(21)}(t) = t,\\
      & \lim_{\epsilon\dto 0} b_1(12) \zeta^{\epsilon}_{(12)}(t) + b_1(21) \zeta^{\epsilon}_{(21)}(t) = \lim_{\epsilon\dto 0} b_2(12) \zeta^{\epsilon}_{(12)}(t) + b_2(21) \zeta^{\epsilon}_{(21)}(t).
    \end{aligned}
  \end{equation*}
  
  As soon as $n \geq 3$, under the stability condition~\eqref{eq:SC}, the $n!$ unknown quantities $\zeta^{\epsilon}_{\sigma}(t)$, $\sigma \in S_n$ satisfy the $n$ independent relations 
  \begin{equation*}
    \begin{aligned}
      & \sum_{\sigma \in S_n} \zeta^{\epsilon}_{\sigma}(t) = t,\\
      & \text{for all $i \in \{1, \ldots, n\}$}, \quad \lim_{\epsilon \dto 0} \sum_{\sigma \in S_n} b_i(\sigma) \zeta^{\epsilon}_{\sigma}(t) \text{ does not depend on $i$},
    \end{aligned}
  \end{equation*}
  which is not enough to determine the small noise limit of the quantities $\zeta^{\epsilon}_{\sigma}(t)$, $\sigma \in S_n$.
  
  Under the strong stability condition~\eqref{eq:SSC}, another strategy to compute the velocity $v$ of the cluster consists in a straightfoward application of the formula~\eqref{eq:v}, which requires to compute $\mu$ by solving the elliptic problem $L^* \mu = 0$ on $M_n$, where the infinitesimal generator $L$ of the solution to~\eqref{eq:ZMn} is constant on each cone $\{z = (z_1, \ldots, z_n) \in M_n : \Sigma z = \sigma\}$, $\sigma \in S_n$. This task can be carried out in the rank-based case~\cite[Theorem~8, p.~2187]{pp}, and can easily be extended to perturbations of this case where, letting $b = (b_1, \ldots, b_n) \in \R^n$ as in Section~\ref{s:rank} and $b' : S_n \to \R$, the drift of the $i$-th particle in the configuration $\sigma$ is given by $b_{\sigma^{-1}(i)} + b'(\sigma)$. However, we were not able to extend this approach to the general order-based case.
\end{rk}

\subsection{A counterexample to necessariness}\label{ss:contrex} Unlike in the rank-based case, the stability condition~\eqref{eq:SC} and {\em a fortiori} the strong stability condition~\eqref{eq:SSC} are not necessary for all the particles to aggregate into a single cluster in the small noise limit. Indeed, consider the following example with $n=3$: let $b(123) = (\lambda_1, \lambda_2, \lambda_3)$, $b(132) = (\eta_1, \eta_3, \eta_2)$ and $b_{\sigma(1)}(\sigma) = 1$, $b_{\sigma(2)}(\sigma)=0$, $b_{\sigma(3)}(\sigma) = -1$ for all $\sigma \in S_3 \setminus \{(123),(132)\}$. We choose $(\lambda_1,\lambda_2,\lambda_3)$ and $(\eta_1,\eta_2,\eta_3)$ in such a way that the configuration $(123)$ does not satisfy the stability condition~\eqref{eq:SC}, which is the case if for instance $\lambda_1 < (\lambda_2+\lambda_3)/2$,
but the particles still aggregate into a single cluster in the small noise limit.

We only give the main idea of the counterexample, the details of the proof are of the same nature as in Appendix~\ref{app:n2}. When $X^{\epsilon}$ is not in the configurations $\{(123),(132)\}$, the instantaneous drifts of the particles tend to keep them close to each other. During an excursion of $X^{\epsilon}$ in the configurations $\{(123),(132)\}$, \ie an excursion of the first particle on the left of the two other particles, the average velocity of the first particle writes $v^1 = \rho \lambda_1 + (1-\rho)\eta_1$, where $\rho$ is the relative amount of time spent in the configuration $(123)$ during the excursion.

If the configurations $(123)$ and $(132)$ are such that $\lambda_2 > \lambda_3$ and $\eta_3 > \eta_2$, then in both configurations $(123)$ and $(132)$, the subsystem composed by the second and the third particles is converging/converging in the sense of Section~\ref{s:n2}. As a consequence, the relative amount of time $\rho$ spent in the configuration $(123)$ during the excursion approximately writes $\rho = (\eta_3-\eta_2)/(\lambda_2-\lambda_3 + \eta_3 - \eta_2)$. Therefore, during the excursion, the average velocity of the subsystem composed by the second and the third particles approximately writes 
\begin{equation*}
  v^{23} = \frac{\eta_3\lambda_2-\eta_2\lambda_3}{\lambda_2-\lambda_3 + \eta_3 - \eta_2}. 
\end{equation*}
Note that, by the definition of $\rho$,
\begin{equation*}
  v^{23} = \rho \lambda_2 + (1-\rho)\eta_2 = \rho \lambda_3 + (1-\rho)\eta_3 = \rho \frac{\lambda_2 + \lambda_3}{2} + (1-\rho)\frac{\eta_2 + \eta_3}{2}. 
\end{equation*}
The particles tend to get closer to each other if $v^1 \geq v^{23}$.

Let us fix some arbitrary values of $\lambda_2$, $\lambda_3$, $\eta_2$, $\eta_3$ such that $\lambda_2 > \lambda_3$ and $\eta_3 > \eta_2$. This prescribes a given value for $\rho \in (0,1)$. The key observation is that $\rho$ does not depend on the values of $\lambda_1$ and $\eta_1$. Of course, if $\lambda_1$ and $\eta_1$ are chosen so that both $(123)$ and $(132)$ satisfy the stability condition~\eqref{eq:SC}, then $\lambda_1 \geq (\lambda_2+\lambda_3)/2$ and $\eta_1 \geq (\eta_2+\eta_3)/2$, and the inequality $v^1 \geq v^{23}$ is straightforward. Let us now fix $\lambda_1 < (\lambda_2+\lambda_3)/2$, so that the configuration $(123)$ does not satisfy the stability condition~\eqref{eq:SC}: in this configuration, the first particle drifts away to the left of the second and the third particles. But, since $\rho$ and $v^{23}$ do not depend on the value of $\eta_1$, the latter can be taken large enough for the inequality $\rho \lambda_1 + (1-\rho)\eta_1 \geq v^{23}$ to hold, and therefore we recover $v^1 \geq v^{23}$. To sum up, the configuration $(132)$ can be chosen `converging enough' to balance the `diverging tendency' of the configuration $(123)$. As a consequence, the particles still aggregate into a single cluster in the small noise limit, while the stability condition~\eqref{eq:SC} is not satisfied. 


\section{Conclusion}\label{s:conclusion}

Let us conclude this article by stating a few conjectures as regards the general behaviour of the process $X^{\epsilon}$ in the small noise limit. Excluding the degenerate situations such as the case $b^+=b^-=0$ in Section~\ref{s:n2} and recalling that, for all $\sigma \in S_n$, $\zeta^{\epsilon}_{\sigma}(t)$ is the occupation time of $\Sigma X^{\epsilon}$ in the configuration $\sigma$, we expect that the quantity
\begin{equation*}
  \rho_{\sigma} := \lim_{\epsilon \dto 0} \frac{1}{t} \zeta^{\epsilon}_{\sigma}(t)
\end{equation*}
does not depend on $t$ for $t < t^*$, where $t^*$ should be thought of as the smallest possible instant of collision between two particles with distinct initial position in the small noise limit. Note that $\rho=(\rho_{\sigma})_{\sigma \in S_n}$ is a probability distribution on $S_n$. It is either random, in which case the particle system in the small noise limit randomly selects a trajectory among several possible ones, or deterministic, in which case the motion of the particle system in the small noise limit is deterministic. For a given realization of $\rho$, the particles travel with constant velocity vector $b^{\rho} := \sum_{\sigma \in S_n} \rho_{\sigma} b(\sigma)$ on $[0,t^*]$. 

Let us fix a realization of $\rho$. Then either all the particles drift away from each other without aggregating into clusters, or several groups of particles aggregate into clusters. This is observed on $\rho$ as follows: in the first case, $\rho=\delta_{\sigma}$, where $\delta_{\sigma}$ is the Dirac measure in the configuration $\sigma$ corresponding to the order in which the particles drift away from each other. Then, $b^{\rho} = b(\sigma)$ and $b_{\sigma(1)}(\sigma) < \cdots < b_{\sigma(n)}(\sigma)$. In the second case, let $\{i_1, \ldots, j_1\}, \ldots, \{i_k, \ldots, j_k\}$ refer to the sets of indices composing each of the $k$ clusters, with $k \geq 1$, $i_1 < j_1 < \cdots < i_k < j_k$. Then, the support of $\rho$, \ie the set of $\sigma \in S_n$ such that $\rho_{\sigma} > 0$, is exactly described by the set of products $\sigma^1 \cdots \sigma^k$, where $(\sigma^1, \ldots, \sigma^k)$ is such that, for all $l \in \{1, \ldots, k\}$, $\sigma^l$ leaves the set $\{1, \ldots, n\} \setminus \{i_l, \ldots, j_l\}$ invariant. As is noted in Remark~\ref{rk:velocity}, the detailed computation of the weights $\rho_{\sigma}$ associated with such permutations remains an open question.

As far as the law of the random probability distribution $\rho$ is concerned, if there exists $\sigma \in S_n$ such that $b_{\sigma(1)}(\sigma) < \cdots < b_{\sigma(n)}(\sigma)$, then the support of the law of $\rho$ is given by the set of the Dirac distributions in each such $\sigma$. The weights associated with each such $\sigma$ can be computed by solving an elliptic problem similar to the one introduced in the proof of Lemma~\ref{lem:taudelta} in Appendix~\ref{app:n2}, in higher dimensions. To our knowledge, there is no explicit solution to such a multidimensional problem. 

If there is no permutation $\sigma \in S_n$ such that $b_{\sigma(1)}(\sigma) < \cdots < b_{\sigma(n)}(\sigma)$, then determining the law of $\rho$ in terms of $b$ amounts to determining the sets of particles that can form clusters with positive probability. This requires a combinatorial analysis of $b$ that remains unclear to us.

\sk
The analysis of collisions above allows us to provide a global description of the small noise limit of $X^{\epsilon}$: excluding again the degenerate situations such as the case $b^+=b^-=0$ in Section~\ref{s:n2}, then between two collisions, the particles travel with a constant velocity, either alone or into clusters, depending on the outcome of the latest collision. At each collision, the velocity of all the particles are modified, possibly randomly. The colliding particles can stick into clusters, and clusters of particles not involved in the collision can be splitted.

\sk
The small noise limit of $X^{\epsilon}$ somehow behaves like the {\em generalized flows} introduced by E and Vanden-Eijnden~\cite{eve}. Indeed, it follows a deterministic trajectory, that has to be interpreted as a solution to the zero noise ODE $\dot{x} = b(\Sigma x)$ in an appropriate sense, then randomly selects a new trajectory at each collision, \ie at each new singularity for the ODE. But whereas E and Vanden-Eijnden observed a loss of the Markov property for some particular examples of generalized flows, which was also the case in the work by Delarue, Flandoli and Vincenzi discussed in introduction~\cite{dfv}, we conjecture that in the order-based case, the small noise limit remains a (piecewise deterministic) Markov process. Indeed, the strong Markov property for the process $X^{\epsilon}$ induces a loss of memory at the collision (see the proof of Corollary~\ref{cor:Z} in Appendix~\ref{app:n2} below), so that the law of the small noise limit at a collision is the same as if the process restarts in the current position.


\appendix

\section{Proofs in the two-particle case}\label{app:n2}

This appendix contains the remaining proofs in the two-particle case of Section~\ref{s:n2}; namely the proofs of cases~\eqref{case:didi:zeta}, \eqref{case:condi:zeta} and~\eqref{case:dicon:zeta} in Lemma~\ref{lem:zeta} and the proof of Corollary~\ref{cor:Z}. 

When the particles have the same initial position, cases~\eqref{case:didi:zeta}, \eqref{case:condi:zeta} and~\eqref{case:dicon:zeta} in Lemma~\ref{lem:zeta} correspond to situations in which the small noise limit of $X^{\epsilon}$ concentrates on the extremal solutions $x^-$ and $x^+$ associated with diverging configurations. Similarly to~\cite{bafico}, the computation of the weights associated with $x^-$ and $x^+$ in the diverging/diverging situation relies on the resolution of a one-dimensional elliptic problem. This task is carried out in Subsection~\ref{ss:didi}, in a slightly more general framework, independent of the remainder of this article. The proofs of cases~\eqref{case:didi:zeta}, \eqref{case:condi:zeta} and~\eqref{case:dicon:zeta} in Lemma~\ref{lem:zeta} are provided in Subsection~\ref{ss:pfs}.

The proof of Corollary~\ref{cor:Z}, which addresses the small noise limit of $Z^{\epsilon}$ when $z^0 \not= 0$, is given in Subsection~\ref{ss:corZ}.

\subsection{Auxiliary results in the diverging/diverging case}\label{ss:didi} Let $a^+ : [0,+\infty) \to \R$, $a^- : (-\infty,0] \to \R$ be bounded and continuous functions, such that $a^-(0) < 0$ and $a^+(0) > 0$. We define the function $a : \R \to \R$ by $a(z) := a^+(z)$ if $z > 0$, and $a(z) := a^-(z)$ if $z \leq 0$. By the Girsanov theorem, for all $z^0 \in \R$, the stochastic differential equation
\begin{equation}\label{eq:Zez}
  \forall t \geq 0, \qquad Z^{\epsilon}(t) = z^0 + \int_{s=0}^t a(Z^{\epsilon}(s)) \dd s + 2\sqrt{\epsilon} B(t),
\end{equation}
admits a unique weak solution defined on some probability space endowed with the probability distribution $P_{z^0}$. The expectation under $P_{z^0}$ is denoted by $E_{z^0}$.

\begin{lem}\label{lem:taudelta}
  Let $\bar{\delta} := 1 \wedge \inf\{\delta \geq 0 : a^+(\delta)=0 \text{ or } a^-(-\delta)=0\} > 0$. For all $\delta \in (0,\bar{\delta})$, let $\tau_{\delta} := \inf\{t \geq 0 : |Z^{\epsilon}(t)| = \delta\}$. Then, for all $z^0 \in [-\delta, \delta]$, $\tau_{\delta}$ is finite $P_{z^0}$-almost surely, and
  \begin{equation*}
    P_0(Z^{\epsilon}(\tau_{\delta})=\delta) = \left(1+\frac{\displaystyle\int_{y=0}^{\delta} \exp\left(-\frac{1}{2\epsilon}\int_{x=0}^y a^+(x)\dd x\right) \dd y}{\displaystyle\int_{y=-\delta}^0 \exp\left(\frac{1}{2\epsilon}\int_{x=y}^0 a^-(x)\dd x\right) \dd y}\right)^{-1}.
  \end{equation*}
\end{lem}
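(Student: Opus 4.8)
The plan is to read \eqref{eq:Zez} as a non-degenerate scalar diffusion with generator $L = 2\epsilon\,\partial^2_{zz} + a(z)\,\partial_z$ and to compute the exit law of the interval $[-\delta,\delta]$ through its scale function. I would set
\[
  s(z) := \int_{u=0}^z \exp\left(-\frac{1}{2\epsilon}\int_{x=0}^u a(x)\,\dd x\right)\dd u, \qquad z \in \R,
\]
so that $s(0)=0$, $s$ is strictly increasing, and $Ls=0$ on $\R\setminus\{0\}$; since $a$ is bounded, $s'$ is continuous and stays between two positive constants on $[-\delta,\delta]$. The one subtlety to keep in mind is that $a$ is discontinuous at the origin, so $s$ is only $C^1$, with $s''=-as'/(2\epsilon)$ defined away from $0$ and bounded. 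I would invoke the generalized It\^o formula for such functions (see~\cite{revuz}), noting that the continuity of $s'$ rules out any local time term at $0$, to obtain that $(s(Z^\epsilon(t\wedge\tau_\delta)))_{t\ge0}$ is a local martingale, in fact a bounded, hence uniformly integrable, martingale because $s$ is bounded on $[-\delta,\delta]$ and the integrand $2\sqrt{\epsilon}\,s'(Z^\epsilon)$ is bounded.

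First I would establish that $\tau_\delta<\infty$, $P_{z^0}$-almost surely, for every $z^0\in[-\delta,\delta]$. Choosing $\lambda>\|a\|_\infty/(2\epsilon)$ and $\phi(z):=e^{\lambda z}$, one computes $L\phi(z)=\lambda e^{\lambda z}(2\epsilon\lambda+a(z))\ge m$ on $[-\delta,\delta]$ for the constant $m:=\lambda e^{-\lambda\delta}(2\epsilon\lambda-\|a\|_\infty)>0$. Applying the Dynkin formula up to $t\wedge\tau_\delta$ and using $\phi\le e^{\lambda\delta}$ on $[-\delta,\delta]$ gives
\[
  m\,E_{z^0}(t\wedge\tau_\delta) \le E_{z^0}\big(\phi(Z^\epsilon(t\wedge\tau_\delta))\big) - \phi(z^0) \le e^{\lambda\delta},
\]
and letting $t\to+\infty$ by monotone convergence yields $E_{z^0}(\tau_\delta)\le e^{\lambda\delta}/m<\infty$, so that $\tau_\delta$ is finite almost surely.

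With finiteness in hand, I would let $t\to+\infty$ in the bounded martingale to obtain, by dominated convergence,
\[
  s(z^0) = E_{z^0}\big(s(Z^\epsilon(\tau_\delta))\big) = s(\delta)\,P_{z^0}(Z^\epsilon(\tau_\delta)=\delta) + s(-\delta)\,P_{z^0}(Z^\epsilon(\tau_\delta)=-\delta).
\]
Since the two exit events are complementary, solving for the first probability gives
\[
  P_{z^0}(Z^\epsilon(\tau_\delta)=\delta) = \frac{s(z^0)-s(-\delta)}{s(\delta)-s(-\delta)},
\]
and specializing to $z^0=0$, where $s(0)=0$, reduces this to $P_0(Z^\epsilon(\tau_\delta)=\delta)=-s(-\delta)/(s(\delta)-s(-\delta))$.

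It then remains to substitute the explicit scale function. On $(0,\delta]$ one has $a=a^+$, while for $y\in[-\delta,0)$ one has $s'(y)=\exp(\frac{1}{2\epsilon}\int_{x=y}^0 a^-(x)\,\dd x)$, so that
\[
  s(\delta) = \int_{y=0}^\delta \exp\left(-\frac{1}{2\epsilon}\int_{x=0}^y a^+(x)\,\dd x\right)\dd y, \qquad -s(-\delta) = \int_{y=-\delta}^0 \exp\left(\frac{1}{2\epsilon}\int_{x=y}^0 a^-(x)\,\dd x\right)\dd y.
\]
Rewriting $-s(-\delta)/(s(\delta)-s(-\delta)) = \big(1+s(\delta)/(-s(-\delta))\big)^{-1}$ then reproduces the announced expression. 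The hard part is really the justification of the It\^o formula for the merely $C^1$ scale function across the drift discontinuity at the origin; once the absence of a local time contribution there is granted, everything else is the classical scale-function computation for a regular one-dimensional diffusion.
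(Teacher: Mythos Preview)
Your proof is correct and, for the exit-probability formula, is essentially the paper's argument: your scale function $s$ is an affine reparametrization of the paper's harmonic function $u$ (the paper sets $u(z)=(s(z)-s(-\delta))/(s(\delta)-s(-\delta))$ directly), and both appeal to the same generalized It\^o formula for a $C^1$ function with absolutely continuous derivative, followed by optional stopping. The one methodological difference is in the finiteness of $\tau_\delta$. The paper applies the It\^o--Tanaka formula to $|Z^\epsilon|$, observes that $\mathrm{sgn}(z)\,a(z)\ge 0$ on $(-\bar\delta,\bar\delta)$ (this is where the hypothesis $\delta<\bar\delta$ enters), and deduces $|Z^\epsilon(t)|\ge |z^0|+2\sqrt{\epsilon}\,\tilde B(t)$, so that $\tau_\delta$ is dominated by a Brownian hitting time. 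Your Dynkin/Lyapunov argument with $\phi(z)=e^{\lambda z}$ is more generic---it uses only the boundedness of $a$ and the nondegeneracy of the diffusion, not the outward-pushing sign structure---and it yields the stronger conclusion $E_{z^0}(\tau_\delta)<\infty$; the paper's comparison gives only almost-sure finiteness (Brownian first-passage times have infinite mean) but provides a pathwise domination that is reused elsewhere in the appendix.
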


The limit of the quantity above when $\epsilon$ goes to $0$ is given by the following corollary. 

\begin{cor}\label{cor:taudelta}
  Under the assumptions of Lemma~\ref{lem:taudelta}, for any $\bar{\epsilon} > 0$ and any function $\delta : (0,\bar{\epsilon}) \to (0, \bar{\delta})$ such that $\epsilon/\delta(\epsilon)$ vanishes with $\epsilon$, then
  \begin{equation*}
    \lim_{\epsilon \dto 0} P_0(Z^{\epsilon}(\tau_{\delta(\epsilon)})=\delta(\epsilon)) = \frac{a^+(0)}{a^+(0)-a^-(0)}.
  \end{equation*}
\end{cor}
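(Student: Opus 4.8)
The plan is to feed the explicit formula of Lemma~\ref{lem:taudelta} into a Laplace-type asymptotic analysis. Set $A^+(y):=\int_{x=0}^y a^+(x)\dd x$ for $y\in[0,\bar{\delta})$ and $A^-(y):=\int_{x=y}^0 a^-(x)\dd x$ for $y\in(-\bar{\delta},0]$, so that the probability at hand reads $(1+N_\epsilon/D_\epsilon)^{-1}$, where $N_\epsilon:=\int_{y=0}^{\delta}\exp(-A^+(y)/(2\epsilon))\dd y$ and $D_\epsilon:=\int_{y=-\delta}^0\exp(A^-(y)/(2\epsilon))\dd y$, with $\delta=\delta(\epsilon)$. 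Since $a^+$ does not vanish on $(0,\bar{\delta})$ and $a^+(0)>0$, one has $A^+>0$ on $(0,\delta]$ with $A^+(y)\sim a^+(0)y$ as $y\dto 0$; symmetrically $A^-<0$ on $[-\delta,0)$ with $A^-(y)\sim -a^-(0)y$ as $y\uto 0$. Hence both integrands decay on the scale $y\sim\epsilon$, and I expect $N_\epsilon/(2\epsilon)\to 1/a^+(0)$ and $D_\epsilon/(2\epsilon)\to -1/a^-(0)$.

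To make this rigorous for $N_\epsilon$, I would rescale $y=2\epsilon v$, giving $N_\epsilon/(2\epsilon)=\int_{v=0}^{\delta/(2\epsilon)}\exp(-A^+(2\epsilon v)/(2\epsilon))\dd v$. For fixed $v$, the identity $A^+(2\epsilon v)/(2\epsilon)=v\,A^+(2\epsilon v)/(2\epsilon v)\to a^+(0)v$ shows the integrand converges pointwise to $e^{-a^+(0)v}$, and the hypothesis $\epsilon/\delta(\epsilon)\dto 0$ is exactly what sends the upper limit $\delta/(2\epsilon)$ to $+\infty$. Fixing $\delta_0\in(0,\bar{\delta})$ small enough that $a^+\geq a^+(0)/2$ on $[0,\delta_0]$, the integrand over $v\in[0,\delta_0/(2\epsilon)]$ is bounded by the integrable function $e^{-a^+(0)v/2}$, so dominated convergence applies on that range and yields $\int_{v=0}^{\infty} e^{-a^+(0)v}\dd v=1/a^+(0)$; meanwhile the remaining range $y\in[\delta_0,\delta]$ contributes at most $e^{-A^+(\delta_0)/(2\epsilon)}$ to $N_\epsilon$ (using $\delta\leq 1$), which is $o(\epsilon)$ since $A^+(\delta_0)>0$. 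The same computation with the substitution $y=-2\epsilon w$ and the bound $a^-\leq a^-(0)/2$ on $[-\delta_0,0]$ gives $D_\epsilon/(2\epsilon)\to\int_{w=0}^{\infty} e^{a^-(0)w}\dd w=-1/a^-(0)$.

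Combining the two asymptotics gives $N_\epsilon/D_\epsilon\to -a^-(0)/a^+(0)$, whence $(1+N_\epsilon/D_\epsilon)^{-1}\to a^+(0)/(a^+(0)-a^-(0))$, which is the announced limit. The only genuine difficulty is the uniform control underpinning the dominated-convergence step: one must turn the linear approximations $A^+(y)\approx a^+(0)y$ and $A^-(y)\approx a^-(0)|y|$ near the origin into one-sided exponential bounds valid simultaneously for all small $\epsilon$, and check that the contribution away from the origin remains negligible after division by $2\epsilon$. Both points are handled by the splitting at the fixed threshold $\delta_0$ described above, relying on the strict positivity of $a^+$ (respectively negativity of $a^-$) near $0$ guaranteed by the definition of $\bar{\delta}$. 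It is worth noting that the argument never uses the behaviour of $a^\pm$ away from the origin beyond boundedness, consistently with the fact that the limit depends only on $a^+(0)$ and $a^-(0)$.
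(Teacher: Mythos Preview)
Your proposal is correct and follows essentially the same Laplace-method route as the paper: both arguments split the integral at a fixed threshold near the origin, exploit the linear behaviour $A^\pm(y)\approx a^\pm(0)y$ on the inner piece, and kill the outer piece using the strict positivity of $A^+(\delta_0)$ together with $\delta\leq 1$. The only cosmetic difference is that you rescale $y=2\epsilon v$ and invoke dominated convergence, whereas the paper sandwiches $a^+$ between $(1\pm\eta)a^+(0)$ on $[0,x_0]$, computes the resulting exponential integrals explicitly, and lets $\eta\dto 0$.
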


\begin{proof}[Proof of Lemma~\ref{lem:taudelta}]
  Let $z^0 \in \R$. Under $P_{z^0}$, for all $t \geq 0$, the Itô-Tanaka formula writes
  \begin{equation*}
    |Z^{\epsilon}(t)| = |z^0| +\int_{s=0}^t \sgn(Z^{\epsilon}(s))a(Z^{\epsilon}(s))\dd s + 2\sqrt{\epsilon} \tilde{B}(t) + L^{\epsilon}(t),
  \end{equation*}
  where the local time $L^{\epsilon}$ at $0$ of the semimartingale $Z^{\epsilon}$ is a nonnegative process, and the process $\tilde{B}$ defined by
  \begin{equation*}
    \forall t \geq 0, \qquad \tilde{B}(t) := \int_{s=0}^t \sgn(Z^{\epsilon}(s))\dd B(s)
  \end{equation*}
  is a Brownian motion, due to Lévy's characterization. Since $\delta < \bar{\delta}$, for all $t \leq \tau_{\delta}$, $a^+(|Z^{\epsilon}(t)|) > 0$ and $a^-(-|Z^{\epsilon}(t)|) < 0$, so that $\sgn(Z^{\epsilon}(t))a(Z^{\epsilon}(t)) \geq 0$ and $|Z^{\epsilon}(t)| \geq |z^0| + 2\sqrt{\epsilon} \tilde{B}(t)$. As a consequence, if $|z^0| \leq \delta$ then $\tau_{\delta} \leq \inf\{t>0 : |z^0|+2\sqrt{\epsilon}\tilde{B}(t) = \delta\}$, which is known to be finite $P_{z^0}$-almost surely~\cite[Remark~8.3, p.~96]{karatzas}. Hence, $\tau_{\delta}$ is finite almost surely.
  
  Let $u$ be the solution to the elliptic problem on $[-\delta,\delta]$:
  \begin{equation*}
    \left\{\begin{aligned}
      & 2\epsilon u''(z) + a(z) u'(z)=0,\\
      & u(-\delta)=0, \quad u(\delta)=1,\\
    \end{aligned}\right.
  \end{equation*}
  given by
  \begin{equation*}
    \forall z \in [-\delta, \delta], \qquad u(z) = \frac{\displaystyle\int_{y=-\delta}^z \exp(-A(y)/2\epsilon) \dd y}{\displaystyle\int_{y=-\delta}^{\delta} \exp(-A(y)/2\epsilon) \dd y},
  \end{equation*}
  where $A(y) := \int_{x=0}^z a(x)\dd x$. Then $u$ is $C^1$ on $[-\delta,\delta]$, and $u'$ is absolutely continuous with respect to the Lebesgue measure, so that, under $P_{z^0}$, $u(Z^{\epsilon}(\cdot \wedge \tau_{\delta}))$ is a martingale. By the martingale stopping theorem, for all $t \geq 0$, $E_{z^0}(u(Z^{\epsilon}(t \wedge \tau_{\delta}))) = u(z^0)$ and the dominated convergence theorem now yields $E_{z^0}(u(Z^{\epsilon}(\tau_{\delta}))) = P_{z^0}(Z^{\epsilon}(\tau_{\delta})=\delta) = u(z^0)$. The conclusion follows from taking $z^0=0$.
\end{proof}

\begin{proof}[Proof of Corollary~\ref{cor:taudelta}]
  The proof is based on the Laplace method. More precisely, we prove that
  \begin{equation*}
    \int_{y=0}^{\delta(\epsilon)} \exp\left(-\frac{1}{2\epsilon}\int_{x=0}^y a^+(x)\dd x\right) \dd y \mathop{\sim}\limits_{\epsilon \dto 0} \frac{2\epsilon}{a^+(0)},
  \end{equation*}
  and the same arguments lead to
  \begin{equation*}
    \int_{y=-\delta(\epsilon)}^0 \exp\left(\frac{1}{2\epsilon}\int_{x=y}^0 a^-(x)\dd x\right) \dd y \mathop{\sim}\limits_{\epsilon \dto 0} -\frac{2\epsilon}{a^-(0)},
  \end{equation*}
  which yields the expected result. Let us fix $\eta \in (0,1)$. Then by the right continuity of $a^+$ in $0$, there exists $x_0>0$ such that, for all $x \in [0,x_0]$, $(1-\eta) a^+(0) \leq a^+(x) \leq (1+\eta) a^+(0)$. As a consequence, 
  \begin{equation*}
    \begin{aligned}
      \int_{y=0}^{\delta(\epsilon) \wedge x_0} \exp\left(-\frac{(1+\eta)a^+(0)}{2\epsilon}y\right)\dd y & \leq \int_{y=0}^{\delta(\epsilon) \wedge x_0} \exp\left(-\frac{1}{2\epsilon}\int_{x=0}^y a^+(x)\dd x\right) \dd y\\
      & \leq \int_{y=0}^{\delta(\epsilon) \wedge x_0} \exp\left(-\frac{(1-\eta)a^+(0)}{2\epsilon}y\right)\dd y .
    \end{aligned}
  \end{equation*}
  Computing both the left- and the right-hand side above and using the facts that $a^+(0)>0$ and $\delta(\epsilon)/\epsilon$ goes to $+\infty$ when $\epsilon$ goes to $0$, we deduce that
  \begin{equation*}
    \begin{aligned}
      & \liminf_{\epsilon \dto 0} \frac{a^+(0)}{2\epsilon} \int_{y=0}^{\delta(\epsilon) \wedge x_0} \exp\left(-\frac{1}{2\epsilon}\int_{x=0}^y a^+(x)\dd x\right) \dd y \geq \frac{1}{1+\eta},\\
      & \limsup_{\epsilon \dto 0} \frac{a^+(0)}{2\epsilon} \int_{y=0}^{\delta(\epsilon) \wedge x_0} \exp\left(-\frac{1}{2\epsilon}\int_{x=0}^y a^+(x)\dd x\right) \dd y \leq \frac{1}{1-\eta}.
    \end{aligned}
  \end{equation*}
  Furthermore,
  \begin{equation*}
    \begin{aligned}
      & \frac{a^+(0)}{2\epsilon} \int_{y=\delta(\epsilon) \wedge x_0}^{\delta(\epsilon)} \exp\left(-\frac{1}{2\epsilon}\int_{x=0}^y a^+(x)\dd x\right) \dd y\\
      & \qquad \leq \ind{\delta(\epsilon)>x_0}\frac{a^+(0)}{2\epsilon} \int_{y=x_0}^{\delta(\epsilon)} \exp\left(-\frac{1}{2\epsilon}\int_{x=0}^{x_0} a^+(x)\dd x\right) \dd y\\
      & \qquad \leq \ind{\delta(\epsilon)>x_0}\frac{a^+(0)}{2\epsilon}\exp\left(-\frac{1}{2\epsilon}\int_{x=0}^{x_0} a^+(x)\dd x\right),
    \end{aligned}
  \end{equation*}
  where we used the fact that $\delta(\epsilon) \leq \bar{\delta} \leq 1$ by definition. The right-hand side above certainly vanishes when $\epsilon$ goes to $0$. Since $\eta$ is arbitrary, the proof is completed.
\end{proof}

\subsection{Remaining proofs in Lemma~\ref{lem:zeta}}\label{ss:pfs} Since, for all $t \geq 0$,
\begin{equation*}
  \zeta^{\epsilon}(t) = \int_{s=0}^t \ind{Z^{\epsilon}(s) \leq 0} \dd s,
\end{equation*}
the process $\zeta^{\epsilon}$ is measurable with respect to the filtration generated by the Brownian motion $B = (W_1-W_2)/\sqrt{2}$. Therefore, the convergences of cases~\eqref{case:didi:zeta}, \eqref{case:condi:zeta} and~\eqref{case:dicon:zeta} Lemma~\ref{lem:zeta} are stated in $L^1_{\loc}(\Pr_0)$, where the index $0$ stands for the value of $z^0$.

Let us begin with the proof of case~\eqref{case:condi:zeta}. Since case~\eqref{case:dicon:zeta} is symmetric, the proof is the same.

\begin{proof}[Proof of~\eqref{case:condi:zeta}]
  Let us assume that $b^+ > 0$, $b^- \geq 0$ and fix $T>0$. For all $t \in [0,T]$,
  \begin{equation*}
    \Exp_0\left(\sup_{t \in [0,T]} \zeta^{\epsilon}(t)\right) \leq \int_{s=0}^T \Pr_0(Z^{\epsilon}(s) \leq 0) \dd s.
  \end{equation*}
  
  Before proving that, for all $s \in [0,T]$, $\Pr_0(Z^{\epsilon}(s) \leq 0)$ vanishes with $\epsilon$ and concluding thanks to the dominated convergence theorem, let us make the two following remarks.
  \begin{itemize}
    \item Certainly, for all $s \geq 0$, $Z^{\epsilon}(s) \geq (b^+ \wedge b^-) t + 2\sqrt{\epsilon} B(s)$. Then, as soon as $b^- > 0$,
    \begin{equation*}
      \forall s \in [0,T], \qquad \Pr_0(Z^{\epsilon}(s) \leq 0) \leq \Pr_0\left(B(s) \leq -\frac{(b^+ \wedge b^-)s}{2\sqrt{\epsilon}}\right)
    \end{equation*}
    and the right-hand side vanishes with $\epsilon$.
    \item In the general case, the density of $Z^{\epsilon}(s)$ was derived by Karatzas and Shreve~\cite{ks} but its integration over the half line $(-\infty,0]$ is not an easy computation.
  \end{itemize}
  We provide a rather elementary proof, based on the use of hitting times of the Brownian motion and the strong Markov property for $Z^{\epsilon}$~\cite[Theorem~6.2.2, p.~146]{stroock}. For all $\delta > 0$, let us define $\tau_{\delta} := \inf\{t > 0 : Z^{\epsilon} = \delta\}$. Then, for all $s \in [0,T]$,
  \begin{equation}\label{eq:pf:condi:1}
    \begin{aligned}
      \Pr_0(Z^{\epsilon}(s) \leq 0) & = \Pr_0(Z^{\epsilon}(s) \leq 0, \tau_{\delta} > s) + \Pr_0(Z^{\epsilon}(s) \leq 0, \tau_{\delta} \leq s)\\
      & \leq \Pr_0(\tau_{\delta} > s) + \int_{t=0}^s \Pr_0(Z^{\epsilon}(s) \leq 0, \tau_{\delta} \in \dd t).
    \end{aligned}
  \end{equation}
  
  In the sequel, we shall choose $\delta$ as a function of $\epsilon$, going to $0$ with $\epsilon$, at a rate ensuring that both terms in the right-hand side above vanish. 
  
  Let us address the first of these terms. For all $t \geq 0$, $Z^{\epsilon}(t) \geq 2\sqrt{\epsilon}B(t)$, therefore $\tau_{\delta} \leq \sigma_{\delta} := \inf\{t > 0 : 2\sqrt{\epsilon} B(t) = \delta\}$. Following~\cite[Remark~8.3, p.~96]{karatzas}, $\sigma_{\delta}$ converges in probability to $0$ as soon as $\delta/\sqrt{\epsilon}$ goes to $0$. Under this condition, $\Pr_0(\tau_{\delta} > s)$ vanishes for all $s>0$.
  
  Let us now address the second term in the right-hand side of~\eqref{eq:pf:condi:1}. By the strong Markov property,
  \begin{equation*}
    \begin{aligned}
      & \int_{t=0}^s \Pr_0(Z^{\epsilon}(s) \leq 0, \tau_{\delta} \in \dd t) = \int_{t=0}^s \Pr_0(Z^{\epsilon}(s) \leq 0 | \tau_{\delta} = t)\Pr_0(\tau_{\delta} \in \dd t)\\
      & \leq \int_{t=0}^s \Pr_0(\inf\{r \geq t : Z^{\epsilon}(r) = 0\} < +\infty | \tau_{\delta} = t)\Pr_0(\tau_{\delta} \in \dd t)\\
      & = \int_{t=0}^s \Pr_0(\inf\{r \geq t : \delta + b^+ (r-t) + 2\sqrt{\epsilon} (B(r)-B(t)) = 0\} < +\infty | \tau_{\delta} = t)\Pr_0(\tau_{\delta} \in \dd t)\\
      & = \int_{t=0}^s \Pr_0(\inf\{r \geq 0 : \delta + b^+ r + 2\sqrt{\epsilon} B(r) = 0\} < +\infty)\Pr_0(\tau_{\delta} \in \dd t)\\
      & \leq \Pr_0(\inf\{r \geq 0 : \delta + b^+ r + 2\sqrt{\epsilon} B(r) = 0\} < +\infty).
    \end{aligned}
  \end{equation*}
  By~\cite[pp.~196-197]{karatzas}, $\Pr_0(\inf\{r \geq 0 : \delta + b^+ r + 2\sqrt{\epsilon} B(r) = 0\} < +\infty) = \exp(-b^+\delta/2\epsilon)$, and the latter vanishes as soon as $\epsilon/\delta$ goes to $0$. As a conclusion, taking $\delta = \epsilon^{3/4}$ allows to prove that the right-hand side of~\eqref{eq:pf:condi:1} vanishes with $\epsilon$, and the proof is completed.  
\end{proof}

We now address case~\eqref{case:didi:zeta}.

\begin{proof}[Proof of case~\eqref{case:didi:zeta}]
  Let us assume that $b^+>0$, $b^-<0$ and fix $T>0$. Let $F : C([0,T],\R) \to \R$ be bounded and Lipschitz continuous, with unit Lipschitz norm. Our purpose is to prove that
  \begin{equation*}
    \lim_{\epsilon \dto 0} \Exp_0(F(\zeta^{\epsilon})) = \frac{b^+}{b^+-b^-} F(0) + \frac{-b^-}{b^+-b^-} F(t),
  \end{equation*}
  where we recall that $t$ denotes the process $(t)_{t \geq 0}$. Then the conclusion follows from the Portmanteau theorem~\cite[Theorem~2.1, p.~11]{billingsley}.
  
  For $\delta>0$, let $\tau_{\delta} := \inf\{t > 0 : |Z^{\epsilon}(t)| = \delta\}$. Note that the definition of $\tau_{\delta}$ is not the same as in the proof of case~\eqref{case:condi:zeta} because of the absolute value. Then
  \begin{equation*}
    \begin{aligned}
      & \left|\Exp_0(F(\zeta^{\epsilon})) - \frac{b^+}{b^+-b^-} F(0) - \frac{-b^-}{b^+-b^-} F(t)\right|\\
      & \qquad \leq \left|\Exp_0\left(F(\zeta^{\epsilon})\ind{Z^{\epsilon}(\tau_{\delta}) = \delta}\right) - \frac{b^+}{b^+-b^-} F(0)\right| + \left|\Exp_0\left(F(\zeta^{\epsilon})\ind{Z^{\epsilon}(\tau_{\delta}) = -\delta}\right) - \frac{-b^-}{b^+-b^-} F(t)\right|,
    \end{aligned}
  \end{equation*}
  and we prove that the first term of the right-hand side above vanishes with $\epsilon$. The same arguments work for the second term.
  
  By the Lipschitz continuity of $F$,
  \begin{equation*}
    \begin{aligned}
      & \left|\Exp_0\left(F(\zeta^{\epsilon})\ind{Z^{\epsilon}(\tau_{\delta}) = \delta}\right) - \frac{b^+}{b^+-b^-} F(0)\right|\\
      & \qquad \leq \left|\Exp_0\left((F(\zeta^{\epsilon})-F(0))\ind{Z^{\epsilon}(\tau_{\delta}) = \delta}\right)\right| + \left|F(0)\left(\Pr_0(Z^{\epsilon}(\tau_{\delta}) = \delta) - \frac{b^+}{b^+-b^-}\right)\right|\\
      & \qquad \leq \Exp_0\left(\ind{Z^{\epsilon}(\tau_{\delta}) = \delta} \sup_{t \in [0,T]} \zeta^{\epsilon}(t)\right) + \left|F(0)\left(\Pr_0(Z^{\epsilon}(\tau_{\delta}) = \delta) - \frac{b^+}{b^+-b^-}\right)\right|.
    \end{aligned}
  \end{equation*}
  
  Owing to the uniqueness in law of solutions to~\eqref{eq:Zez} above, Corollary~\ref{cor:taudelta} ensures that the second term in the right-hand side above vanishes as soon as $\epsilon/\delta$ goes to $0$. The first term satisfies
  \begin{equation*}
    \begin{aligned}
      \Exp_0\left(\ind{Z^{\epsilon}(\tau_{\delta}) = \delta} \sup_{t \in [0,T]} \zeta^{\epsilon}(t)\right) & = \Exp_0\left(\ind{Z^{\epsilon}(\tau_{\delta}) = \delta} \int_{s=0}^T \ind{Z^{\epsilon}(s) \leq 0} \dd s\right)\\
      & = \int_{s=0}^T \Pr_0(Z^{\epsilon}(\tau_{\delta}) = \delta, Z^{\epsilon}(s) \leq 0) \dd s.
    \end{aligned}
  \end{equation*}
  We now prove that, for all $s \in [0,T]$, $\Pr_0(Z^{\epsilon}(\tau_{\delta}) = \delta, Z^{\epsilon}(s) \leq 0)$ vanishes for a suitable choice of $\delta$ depending on $\epsilon$. By the same arguments as in the proof of~\eqref{case:condi:zeta},
  \begin{equation*}
    \begin{aligned}
      & \Pr_0(Z^{\epsilon}(\tau_{\delta}) = \delta, Z^{\epsilon}(s) \leq 0)\\
      & \qquad = \Pr_0(Z^{\epsilon}(\tau_{\delta}) = \delta, Z^{\epsilon}(s) \leq 0, \tau_{\delta} > s) + \Pr_0(Z^{\epsilon}(\tau_{\delta}) = \delta, Z^{\epsilon}(s) \leq 0, \tau_{\delta} \leq s)\\
      & \qquad \leq \Pr_0(\tau_{\delta} > s) + \Pr_0(\inf\{r \geq 0 : \delta + b^+ r + 2\sqrt{\epsilon} B(r) = 0\} < +\infty)\\
      & \qquad = \Pr_0(\tau_{\delta} > s) + \exp(-b^+\delta/2\epsilon).
    \end{aligned} 
  \end{equation*}
  The second term in the right-hand side above vanishes as soon as $\epsilon/\delta$ goes to $0$. To control the first term, let us use the Itô-Tanaka formula and compute
  \begin{equation*}
    |Z^{\epsilon}(t)| = \int_{s=0}^t \sgn(Z^{\epsilon}(s))\ell(Z^{\epsilon}(s))\dd s + 2\sqrt{\epsilon}\int_{s=0}^t \sgn(Z^{\epsilon}(s)) \dd B(s) + L^{\epsilon}(t),
  \end{equation*}
  where the local time $L^{\epsilon}$ at $0$ of the semimartingale $Z^{\epsilon}$ is a nonnegative process. Besides, for all $z \in \R$, $\sgn(z)\ell(z) \geq 0$ and the process $\tilde{B}$ defined by
  \begin{equation*}
    \tilde{B}(t) = \int_{s=0}^t \sgn(Z^{\epsilon}(s)) \dd B(s)
  \end{equation*}
  is a Brownian motion, due to Lévy's characterization. As a consequence, $|Z^{\epsilon}(t)| \geq 2\sqrt{\epsilon}\tilde{B}(t)$, therefore $\tau_{\delta} \leq \sigma_{\delta} := \inf\{t \geq 0 : 2\sqrt{\epsilon} \tilde{B}(t) = \delta\}$. By the same argument as in the proof of~\eqref{case:condi:zeta}, $\Pr_0(\tau_{\delta} > s)$ vanishes as soon as $\delta/\sqrt{\epsilon}$ goes to $0$. We complete the proof by letting $\delta = \epsilon^{3/4}$.
\end{proof}

\subsection{Proof of Corollary~\ref{cor:Z}}\label{ss:corZ} Certainly, the cases $z^0>0$ and $z^0<0$ are symmetric, therefore we only address the case $z^0>0$. Recall that, in this case, the process $z^{\dto}$ is defined by:
\begin{itemize}
  \item if $b^+ \geq 0$, $z^{\dto}(t) = z^0 + b^+ t$ for all $t \geq 0$;
  \item if $b^+ < 0$ and $b^- \geq 0$, $z^{\dto}(t) = z^0 + b^+t$ if $t < t^* := z^0/(-b^+)$ and $z^{\dto}(t) = 0$ for $t \geq t^*$;
  \item if $b^+ < 0$ and $b^- < 0$, $z^{\dto}(t) = z^0 + b^+t$ if $t < t^*$ and $z^{\dto}(t) = b^-(t-t^*)$ for $t \geq t^*$.
\end{itemize}

\begin{proof}[Proof of Corollary~\ref{cor:Z}]
  Let us assume that $z^0>0$. Let $\tau_{\epsilon} := \inf\{t \geq 0 : Z^{\epsilon}(t)=0\} = \inf\{t \geq 0 : z^0 + b^+t + 2 \sqrt{\epsilon} B(t) = 0\}$. Following Karatzas and Shreve~\cite[Exercise~5.10, p.~197]{karatzas}, the Laplace transform of $\tau_{\epsilon}$ writes
  \begin{equation*}
    \forall \alpha > 0, \qquad \Exp_{z^0}(\exp\left(-\alpha \tau_{\epsilon}\right)) = \exp\left(-\frac{b^+ z^0}{4 \epsilon} - \frac{z^0}{2\sqrt{\epsilon}}\sqrt{\frac{(b^+)^2}{4\epsilon} + 2 \alpha}\right),
  \end{equation*}
  so that one easily deduces that:
  \begin{itemize}
    \item if $b^+ \geq 0$, then for all $T>0$, $\lim_{\epsilon \dto 0} \Pr_{z^0}(\tau_{\epsilon} \leq T) = 0$,
    \item if $b^+ < 0$, then $\tau_{\epsilon}$ converges in probability to $t^* = z^0/(-b^+)$.
  \end{itemize}
  
  We first address the case $b^+ \geq 0$. Then, for all $T > 0$,
  \begin{equation*}
    \begin{aligned}
      & \Exp_{z^0}\left(\sup_{t \in [0,T]} |Z^{\epsilon}(t) - z^{\dto}(t)|\right)\\
      & \qquad = \Exp_{z^0}\left(\sup_{t \in [0,T]} |Z^{\epsilon}(t) - z^{\dto}(t)|\ind{\tau_{\epsilon} \leq T}\right) + \Exp_{z^0}\left(\sup_{t \in [0,T]} |Z^{\epsilon}(t) - z^{\dto}(t)|\ind{\tau_{\epsilon} > T}\right),
    \end{aligned}
  \end{equation*}
  and
  \begin{equation*}
    \begin{aligned}
      & \Exp_{z^0}\left(\sup_{t \in [0,T]} |Z^{\epsilon}(t) - z^{\dto}(t)|\ind{\tau_{\epsilon} \leq T}\right) = \Exp_{z^0}\left(\sup_{t \in [0,T]} |(b^--b^+)\zeta^{\epsilon}(t) + 2\sqrt{\epsilon}B(t)|\ind{\tau_{\epsilon} \leq T}\right)\\
      & \qquad \leq |b^--b^+|T \Pr(\tau_{\epsilon} \leq T) + 2\sqrt{\epsilon}\Exp_{z^0}\left(\sup_{t \in [0,T]} |B(t)|\ind{\tau_{\epsilon} \leq T}\right),
    \end{aligned}
  \end{equation*}
  while
  \begin{equation*}
    \Exp_{z^0}\left(\sup_{t \in [0,T]} |Z^{\epsilon}(t) - z^{\dto}(t)|\ind{\tau_{\epsilon} > T}\right) = 2\sqrt{\epsilon}\Exp_{z^0}\left(\sup_{t \in [0,T]} |B(t)|\ind{\tau_{\epsilon} > T}\right).
  \end{equation*}
  As a consequence, 
  \begin{equation*}
    \Exp_{z^0}\left(\sup_{t \in [0,T]} |Z^{\epsilon}(t) - z^{\dto}(t)|\right) \leq |b^--b^+|T \Pr(\tau_{\epsilon} \leq T) + 2\sqrt{\epsilon}\Exp_{z^0}\left(\sup_{t \in [0,T]} |B(t)|\right),
  \end{equation*}
  and the right-hand side above easily vanishes with $\epsilon$.
  
  We now address the case $b^+ < 0$. Let us first define the random process $z^{\dto}_{\epsilon}$ by
  \begin{equation*}
    \forall t \geq 0, \qquad z^{\dto}_{\epsilon}(t) := \left\{\begin{aligned}
      & z^0 + b^+t & \text{if $t < \tau_{\epsilon}$},\\
      & 0 & \text{if $t \geq \tau_{\epsilon}$ and $b^- \geq 0$},\\
      & b^-(t-\tau_{\epsilon}) & \text{if $t \geq \tau_{\epsilon}$ and $b^- < 0$}.
    \end{aligned}\right.    
  \end{equation*}
  Note that, for $t \geq \tau_{\epsilon}$, $z^{\dto}_{\epsilon}(t)$ writes $b'(t-\tau_{\epsilon})$, where $b' := b^- \wedge 0$. We now prove that, for all $T > 0$,
  \begin{equation*}
    \lim_{\epsilon \dto 0} \Exp_{z^0}\left(\sup_{t \in [0,T]} |Z^{\epsilon}(t) - z^{\dto}_{\epsilon}(t)|\right) = 0.
  \end{equation*}
  In this purpose, we fix $T>0$ and write, on the one hand,
  \begin{equation*}
    \Exp_{z^0}\left(\sup_{t \in [0,\tau_{\epsilon} \wedge T)} |Z^{\epsilon}(t) - z^{\dto}_{\epsilon}(t)|\right) = \Exp_{z^0}\left(\sup_{t \in [0,\tau_{\epsilon} \wedge T)} |2\sqrt{\epsilon} B(t)|\right) \leq 2\sqrt{\epsilon} \Exp_{z^0}\left(\sup_{t \in [0,T]} |B(t)|\right).
  \end{equation*}
  On the other hand,
  \begin{equation*}
    \begin{aligned}
      \Exp_{z^0}\left(\sup_{t \in [\tau_{\epsilon} \wedge T, T]} |Z^{\epsilon}(t) - z^{\dto}_{\epsilon}(t)|\right) & = \Exp_{z^0}\left(\ind{\tau_{\epsilon} \leq T} \sup_{t \in [\tau_{\epsilon}, T]} |Z^{\epsilon}(t) - b'(t-\tau_{\epsilon})|\right)\\
      & \leq \Exp_{z^0}\left(\sup_{s \in [0,T]} |Z^{\epsilon}(s+\tau_{\epsilon}) - b's|\right)\\
      & = \Exp_{z^0}\left(\Exp_{z^0}\left(\left.\sup_{s \in [0,T]} |Z^{\epsilon}(s+\tau_{\epsilon}) - b's| \right| \mathcal{F}_{\tau_{\epsilon}}\right)\right)\\
      & = \Exp_0\left(\sup_{s \in [0,T]} |Z^{\epsilon}(s) - b's|\right),
    \end{aligned}
  \end{equation*}
  where we have used the strong Markov property for the process $(Z^{\epsilon}(t))_{t \geq 0}$ and the fact that $Z^{\epsilon}(\tau_{\epsilon}) = 0$. It now follows from Proposition~\ref{prop:Z} that the right-hand side above vanishes with $\epsilon$.
  
  To complete the proof, we finally check that
  \begin{equation}\label{eq:limzdtozdtoeps}
    \lim_{\epsilon \dto 0} \Exp_{z^0}\left(\sup_{t \in [0,T]} |z^{\dto}_{\epsilon}(t) - z^{\dto}(t)|\right) = 0.
  \end{equation}
  It follows from a straightforward analysis of $|z^{\dto}_{\epsilon}(t) - z^{\dto}(t)|$ that there exists $C > 0$, depending on $b^+$ and $b'$, such that, for all $t \in [0,T]$, $|z^{\dto}_{\epsilon}(t) - z^{\dto}(t)| \leq C (T \wedge |\tau_{\epsilon}-t^*|)$. Since $\tau_{\epsilon}$ converges in probability to $t^*$ and the function $t \mapsto C (T \wedge |t-t^*|)$ is continuous and bounded, we obtain~\eqref{eq:limzdtozdtoeps} and the proof is completed.
\end{proof}

\subsection*{Acknowledgements} We are grateful to Régis Monneau for stimulating discussions that motivated this study. This work also benefited from fruitful conversations with Franco Flandoli and Lorenzo Zambotti. Finally, we would like to thank the anonymous referees for their careful reading of the manuscript. Their suggestions allowed us to improve the presentation of several proofs.


\begin{thebibliography}{10}

\bibitem{flandoli}
S. Attanasio and F. Flandoli.
\newblock Zero-noise solutions of linear transport equations without
  uniqueness: an example.
\newblock {\em C. R. Math. Acad. Sci. Paris}, 347(13-14):753--756, 2009.

\bibitem{bafico}
R. Bafico and P. Baldi.
\newblock Small random perturbations of {P}eano phenomena.
\newblock {\em Stochastics}, 6(3-4):279--292, 1981/82.

\bibitem{banner}
A. D. Banner, E. R. Fernholz, and I. Karatzas.
\newblock Atlas models of equity markets.
\newblock {\em Ann. Appl. Probab.}, 15(4):2296--2330, 2005.

\bibitem{billingsley}
P. Billingsley.
\newblock {\em Convergence of probability measures}.
\newblock Wiley Series in Probability and Statistics: Probability and
  Statistics. John Wiley \& Sons Inc., New York, second edition, 1999.
\newblock A Wiley-Interscience Publication.

\bibitem{bostal}
M. Bossy and D. Talay.
\newblock Convergence rate for the approximation of the limit law of weakly
  interacting particles: application to the {B}urgers equation.
\newblock {\em Ann. Appl. Probab.}, 6(3):818--861, 1996.

\bibitem{bregre}
Y. Brenier and E. Grenier.
\newblock Sticky particles and scalar conservation laws.
\newblock {\em SIAM J. Numer. Anal.}, 35(6):2317--2328 (electronic), 1998.

\bibitem{boq}
R. Buckdahn, Y. Ouknine, and M. Quincampoix.
\newblock On limiting values of stochastic differential equations with small
  noise intensity tending to zero.
\newblock {\em Bull. Sci. Math.}, 133(3):229--237, 2009.

\bibitem{dfv}
F. Delarue, F. Flandoli, and D. Vincenzi.
\newblock Noise prevents collapse of {V}lasov-{P}oisson point charges.
\newblock {\em Communications on Pure and Applied Mathematics}, 2013.

\bibitem{dsvz}
A. Dembo, M. Shkolnikov, S. R. S. Varadhan, and O. Zeitouni.
\newblock {Large deviations for diffusions interacting through their ranks}.
\newblock Preprint available at \url{http://arxiv.org/abs/1211.5223}, 2012.

\bibitem{eve}
W. E and E. Vanden-Eijnden.
\newblock A note on generalized flows.
\newblock {\em Phys. D}, 183(3-4):159--174, 2003.

\bibitem{ethier}
S. N. Ethier and Th. G. Kurtz.
\newblock {\em Markov processes}.
\newblock Wiley Series in Probability and Mathematical Statistics: Probability
  and Mathematical Statistics. John Wiley \& Sons Inc., New York, 1986.
\newblock Characterization and convergence.

\bibitem{fernholz}
E. R. Fernholz.
\newblock {\em Stochastic portfolio theory}, volume~48 of {\em Applications of
  Mathematics (New York)}.
\newblock Springer-Verlag, New York, 2002.
\newblock Stochastic Modelling and Applied Probability.

\bibitem{fikp}
E. R. Fernholz, T. Ichiba, I. Karatzas, and V. Prokaj.
\newblock Planar diffusions with rank-based characteristics and perturbed
  {T}anaka equations.
\newblock {\em Probab. Theory Related Fields}, 156(1-2):343--374, 2013.

\bibitem{ferkar}
E. R. Fernholz and I. Karatzas.
\newblock Stochastic portfolio theory: A survey.
\newblock In Handbook of Numerical Analysis. Mathematical Modeling and
  Numerical Methods in Finance, 2009.

\bibitem{fik}
E. R. Fernholz, T. Ichiba, and I. Karatzas.
\newblock A second-order stock market model.
\newblock {\em Annals of Finance}, 9(3):439--454, 2013.

\bibitem{fleson}
W. H. Fleming and H. M. Soner.
\newblock {\em Controlled {M}arkov processes and viscosity solutions},
  volume~25 of {\em Stochastic Modelling and Applied Probability}.
\newblock Springer, New York, second edition, 2006.

\bibitem{fw}
M. I. Freidlin and A. D. Wentzell.
\newblock {\em Random perturbations of dynamical systems}, volume 260 of {\em
  Grundlehren der Mathematischen Wissenschaften [Fundamental Principles of
  Mathematical Sciences]}.
\newblock Springer-Verlag, New York, second edition, 1998.
\newblock Translated from the 1979 Russian original by Joseph Sz{\"u}cs.

\bibitem{ghr}
M. Gradinaru, S. Herrmann, and B. Roynette.
\newblock A singular large deviations phenomenon.
\newblock {\em Ann. Inst. H. Poincar\'e Probab. Statist.}, 37(5):555--580,
  2001.

\bibitem{herrmann}
S. Herrmann.
\newblock Ph\'enom\`ene de {P}eano et grandes d\'eviations.
\newblock {\em C. R. Acad. Sci. Paris S\'er. I Math.}, 332(11):1019--1024,
  2001.

\bibitem{ik}
T. Ichiba and I. Karatzas.
\newblock On collisions of {B}rownian particles.
\newblock {\em Ann. Appl. Probab.}, 20(3):951--977, 2010.

\bibitem{iks}
T. Ichiba, I. Karatzas, and M. Shkolnikov.
\newblock Strong solutions of stochastic equations with rank-based
  coefficients.
\newblock {\em Probab. Theory Related Fields}, 156(1-2):229--248, 2013.

\bibitem{ips}
T. Ichiba, S. Pal, and M. Shkolnikov.
\newblock Convergence rates for rank-based models with applications to
  portfolio theory.
\newblock {\em Probab. Theory Related Fields}, 156(1-2):415--448, 2013.

\bibitem{ipbkf}
T. Ichiba, V. Papathanakos, A. Banner, I. Karatzas, and E. R. Fernholz.
\newblock Hybrid atlas models.
\newblock {\em Ann. Appl. Probab.}, 21(2):609--644, 2011.

\bibitem{jourdain:porous}
B. Jourdain.
\newblock Probabilistic approximation for a porous medium equation.
\newblock {\em Stochastic Process. Appl.}, 89(1):81--99, 2000.

\bibitem{jourdain:sticky}
B. Jourdain.
\newblock Signed sticky particles and 1{D} scalar conservation laws.
\newblock {\em C. R. Math. Acad. Sci. Paris}, 334(3):233--238, 2002.

\bibitem{jm}
B. Jourdain and F. Malrieu.
\newblock Propagation of chaos and {P}oincar\'e inequalities for a system of
  particles interacting through their {CDF}.
\newblock {\em Ann. Appl. Probab.}, 18(5):1706--1736, 2008.

\bibitem{jourey}
B. Jourdain and J. Reygner.
\newblock Propagation of chaos for rank-based interacting diffusions and long
  time behaviour of a scalar quasilinear parabolic equation.
\newblock {\em Stochastic Partial Differential Equations: Analysis and
  Computations}, 1(3):455--506, 2013.

\bibitem{ks}
I. Karatzas and S. E. Shreve.
\newblock Trivariate density of {B}rownian motion, its local and occupation
  times, with application to stochastic control.
\newblock {\em Ann. Probab.}, 12(3):819--828, 1984.

\bibitem{karatzas}
I. Karatzas and S. E. Shreve.
\newblock {\em Brownian motion and stochastic calculus}, volume 113 of {\em
  Graduate Texts in Mathematics}.
\newblock Springer-Verlag, New York, second edition, 1991.

\bibitem{pages}
G. Pag{\`e}s.
\newblock Sur quelques algorithmes r\'ecursifs pour les probabilit\'es
  num\'eriques.
\newblock {\em ESAIM Probab. Statist.}, 5:141--170 (electronic), 2001.

\bibitem{pp}
S. Pal and J. Pitman.
\newblock One-dimensional {B}rownian particle systems with rank-dependent
  drifts.
\newblock {\em Ann. Appl. Probab.}, 18(6):2179--2207, 2008.

\bibitem{revuz}
D. Revuz and M. Yor.
\newblock {\em Continuous martingales and {B}rownian motion}, volume 293 of
  {\em Grundlehren der Mathematischen Wissenschaften [Fundamental Principles of
  Mathematical Sciences]}.
\newblock Springer-Verlag, Berlin, third edition, 1999.

\bibitem{rb}
L. Rey-Bellet.
\newblock Ergodic properties of {M}arkov processes.
\newblock In {\em Open quantum systems. {II}}, volume 1881 of {\em Lecture
  Notes in Math.}, pages 1--39. Springer, Berlin, 2006.

\bibitem{stroock}
D. W. Stroock and S. R. S. Varadhan.
\newblock {\em Multidimensional diffusion processes}.
\newblock Classics in Mathematics. Springer-Verlag, Berlin, 2006.
\newblock Reprint of the 1997 edition.

\bibitem{tanaka}
H. Tanaka.
\newblock Stochastic differential equations with reflecting boundary condition
  in convex regions.
\newblock {\em Hiroshima Math. J.}, 9(1):163--177, 1979.

\bibitem{veretennikov}
A. Ju. Veretennikov.
\newblock Strong solutions and explicit formulas for solutions of stochastic
  integral equations.
\newblock {\em Mat. Sb. (N.S.)}, 111(153)(3):434--452, 480, 1980.

\bibitem{ver83}
A. Ju. Veretennikov.
\newblock Approximation of ordinary differential equations by stochastic ones.
\newblock {\em Mat. Zametki}, 33(6):929--932, 1983.

\end{thebibliography}
\end{document}